\numberwithin{equation}{section}
\newcommand{\SDR}[5]{\xymatrix{*[r]{#1} \ar@<1ex>[r]^-{#3} \ar@(ul,dl)[]_{#5} & #2 \ar@<1ex>[l]^-{#4}}}
\newcommand{\bigSDR}[5]{\xymatrix{*[r]{#1} \ar@<1ex>[rr]^-{#3} \ar@(ul,dl)[]_{#5} && #2 \ar@<1ex>[ll]^-{#4}}}
\newcommand{\bigbigSDR}[5]{\xymatrix{*[r]{#1} \ar@<1ex>[rrr]^-{#3} \ar@(ul,dl)[]_{#5} &&& #2 \ar@<1ex>[lll]^-{#4}}}
\newcommand{\adjunction}[4]{\xymatrix{ {#1} \ar@<1ex>[r]^-{#3} & {#2} \ar@<1ex>[l]^-{#4}}}
\newcommand{\bigadjunction}[4]{\xymatrix{ #1 \ar@<1ex>[rr]^-{#3} && #2 \ar@<1ex>[ll]^-{#4}}}
\newcommand{\hugeadjunction}[4]{\xymatrix{ #1 \ar@<1ex>[rrr]^-{#3} &&& #2 \ar@<1ex>[lll]^-{#4}}}
\newcommand{\varadjunction}[4]{\xymatrix{ #3 \colon #1 \ar@<1ex>[r] & #2 \colon #4 \ar@<1ex>[l]}}
\newcommand{\tensor}{\otimes}
\newcommand{\cotensor}{\square}
\newcommand{\Hom}{\operatorname{Hom}}
\newcommand{\Map}{\operatorname{Map}}
\newcommand{\Coinv}{\operatorname{Coinv}}
\newcommand{\Ch}{\mathsf{Ch}}
\newcommand{\coker}{\operatorname{coker}}
\newcommand{\Ho}{\operatorname{Ho}}
\newcommand{\Coring}{\mathsf{Coring}}
\newcommand{\Alg}{\mathsf{Alg}}
\newcommand{\Coalg}{\mathsf{Coalg}}
\newcommand{\Comodalg}{\mathsf{ComodAlg}}
\newcommand{\Pair}{\mathsf{Triple}}
\newcommand{\kk}{\Bbbk}
\newcommand{\CC}{\mathscr{C}}
\newcommand{\DD}{\mathscr{D}}
\newcommand{\VV}{\mathscr{V}}
\newcommand{\UU}{\mathcal{U}}
\newcommand{\cat}{\mathscr}
\newcommand{\coequalizer}[5]{\xymatrix{{#1} \ar@<0.5ex>[r]^-{#4} \ar@<-0.5ex>[r]_-{#5} & {#2} \ar[r] & {#3}}}
\newcommand{\equalizer}[5]{\xymatrix{{#1} \ar[r] & {#2} \ar@<0.5ex>[r]^-{#4} \ar@<-0.5ex>[r]_{#5} & {#3}}}
\newcommand{\Desc}{\operatorname{Desc}}
\newcommand{\Hopf}{\operatorname{Hopf}}
\newcommand{\Gal}{\operatorname{Gal}}
\newcommand{\Cof}{\operatorname{Cof}}
\newcommand{\Can}{\operatorname{Can}}
\newcommand{\Prim}{\operatorname{Prim}}
\renewcommand{\Bar}{\operatorname{Bar}}
\newcommand{\vp}{\varphi}
\newcommand{\ve}{\varepsilon}
\newcommand{\Om}{\Omega}
\newcommand{\Id}{\operatorname{Id}}
\newcommand\si{s^{-1}}
\newcommand\tot{\,\widetilde\otimes\,}
\newcommand\G {\mathbb G}
\newtheorem{theorem}{Theorem}[section]
\newtheorem{proposition}[theorem]{Proposition}
\newtheorem{corollary}[theorem]{Corollary}
\newtheorem{lemma}[theorem]{Lemma}
\theoremstyle{definition}
\newtheorem{definition}[theorem]{Definition}
\newtheorem{example}[theorem]{Example}
\newtheorem{examples}[theorem]{Examples}
\newtheorem{remark}[theorem]{Remark}
\newtheorem{convention}[theorem]{Hypothesis}
\newtheorem{notation}[theorem]{Notation}
\numberwithin{theorem}{section}
\title{Homotopic Hopf-Galois extensions revisited}
\author{Alexander Berglund and Kathryn Hess}
\address{Department of Mathematics, Stockholm University, SE-106 91 Stockholm, Sweden}
\email{alexb@math.su.se}
\address{MATHGEOM, \'Ecole Polytechnique F\'ed\'erale de Lausanne, CH-1015 Lausanne, Switzerland}
\email{kathryn.hess@epfl.ch}
\thanks{This material is based upon work partially supported by the National Science Foundation under Grant No.~0932078000 while the second author was in residence at the Mathematical Sciences Research Institute in Berkeley, California, during the Spring 2014 semester. The second author was also supported during this project by the Swiss National Science Foundation, Grant No. 200020\_144393.}
 \keywords {Hopf-Galois extension, descent, Morita theory, model category.} 
 \subjclass [2010] {Primary: 16T05; Secondary: 13B05, 16D90, 16T15, 18G35, 18G55, 55U35}
\begin{document}

\begin{abstract} In this article we revisit the theory of homotopic Hopf-Galois extensions introduced in \cite{hess:hhg}, in light of the homotopical Morita theory of comodules established in \cite{berglund-hess:morita}. We generalize the theory to a relative framework, which we believe is new even in the classical context and which is essential for treating the Hopf-Galois correspondence in \cite{hess-karpova}. We study in detail homotopic Hopf-Galois extensions of differential graded algebras over a commutative ring, for which we establish a descent-type characterization analogous to the one Rognes provided in the context of ring spectra \cite{rognes}. An interesting feature in the differential graded setting is the close relationship between homotopic Hopf-Galois theory and Koszul duality theory.  We show that nice enough principal fibrations of simplicial sets give rise to homotopic Hopf-Galois extensions in the differential graded setting, for which this Koszul duality has a familiar form.
\end{abstract}

\maketitle

\tableofcontents

\section{Introduction}
The theory of Hopf-Galois extensions of associative rings, introduced by Chase and Sweedler \cite{chase-sweedler} and by Kreimer and Takeuchi \cite{kreimer-takeuchi}, generalizes Galois theory of fields, replacing the action of a group by the coaction of a Hopf algebra. Inspired by Rognes' theory of Hopf-Galois extensions of ring spectra \cite{rognes}, the second author laid the foundations for  a theory of homotopic Hopf-Galois extensions in an arbitrary monoidal model category in \cite{hess:hhg}, but the necessary model category structures were not well enough understood to make it possible to compute many examples.  Since then, considerable progress has been made in elaborating these model category structures (e.g., \cite{bhkkrs}, \cite{hess-shipley}, \cite{hkrs}), so that the time is ripe to revisit this subject. 

In this article we develop anew the theory of homotopic Hopf-Galois extensions, in light of the homotopical Morita theory of comodules established in \cite{berglund-hess:morita}. Moreover we generalize the theory to a relative framework, which we believe is new even in the classical context and which is  essential for treating the Hopf-Galois correspondence in \cite{hess-karpova}. We also provide a descent-type characterization of homotopic Hopf-Galois extensions of finite-type differential graded algebras over a field, analogous to \cite[Proposition 12.1.8]{rognes}.

\subsection{The classical framework}

Classical Hopf-Galois extensions show up in a wide variety of mathematical contexts. For example, faithfully flat HG-extensions over the coordinate ring of an affine group scheme $G$ correspond to $G$-torsors.  By analogy, if a Hopf algebra $H$ is the coordinate ring of a quantum group, then an $H$-Hopf-Galois extension can be viewed as a noncommutative torsor with  the quantum group as its structure group.  It can moreover be fruitful to  study Hopf algebras via their associated Hopf-Galois extensions, just as algebras are studied via their associated  modules. 

For an excellent introduction to the classical theory of Hopf-Galois extensions, we refer the reader to the survey articles by Montgomery \cite{montgomery} and Schauenburg \cite{schauenburg}.  We recall here only the definition and two elementary examples, which can be found in either of these articles.

\begin{definition}\label{defn:hg-classical}  Let $R $ be a commutative ring, and let $H$ be a $R $-bialgebra. Let $\varphi\colon   A\to B$ be a  homomorphism of right $H$-comodule algebras, where the $H$-coaction on $A$ is trivial.

The homomorphism $\varphi$ is an \emph{$H$-Hopf-Galois extension} if  
\begin{enumerate}
\item the composite
$$B\otimes_{A} B \xrightarrow {B\otimes _{A} \rho} B\otimes_{A}B\otimes H \xrightarrow{\mu \otimes H} B\otimes H,$$
where $\rho$ denotes the $H$-coaction on $B$, and $\mu$ denotes the multiplication map of $B$ as an $A$-algebra,
and 
\item the induced map
$$A\to B^{\mathrm{co}\, H} :=\{ b\in B\mid \rho(b)=b\otimes 1\}$$
\end{enumerate}
are both isomorphisms.
\end{definition}

\begin{notation}  The composite in (1), often denoted $\beta_{\vp}\colon   B\otimes_{A}B \to B\otimes H$, is called the \emph{Galois map}.
\end{notation}

\begin{examples}\begin{enumerate}
\item \cite[Example 2.3]{montgomery} Let $\kk \subset E$ be a field extension.  Let $G$ be a finite group that acts on $E$ through $\kk$-automorphisms, which implies that its dual $\kk ^{G}=\operatorname{Hom} (\kk [G], \kk)$ coacts on $E$.  The extension $E^{G}\subset E$ is $G$-Galois if and only if it is a $\kk ^{G}$-Hopf-Galois extension.
\item \cite[Theorem 2.2.7]{schauenburg}    Let $R $ be a commutative ring, $H$ a bialgebra over $R $ that is flat as $R $-module, and $A$ a flat $R $-algebra.  The trivial extension $A\to A\otimes H\colon   a \mapsto a\otimes 1$ is then an $H$-Hopf-Galois extension if $A\otimes H$ admits a \emph{cleaving}, i.e., a convolution-invertible morphism of $H$-comodules $H\to A\otimes H$.  In particular, the unit map $\kk \to H$ is an $H$-Hopf-Galois extension if and only if $H$ is a Hopf algebra.
\end{enumerate}
\end{examples}

\subsection{The homotopic framework}

In his monograph on Galois extensions of structured ring spectra \cite{rognes}, Rognes formulated a reasonable, natural definition of homotopic Hopf-Galois extensions of commutative ring spectra. Let $\vp\colon  A \to B$ be a morphism of commutative ring spectra, and let $H$ be a commutative ring spectrum equipped with a comultiplication $H\to H\wedge H$ that is a map of ring spectra, where $-\wedge -$ denotes the smash product of spectra.  Suppose that $H$ coacts on $B$ so that $\vp$ is a morphism of $H$-comodules when $A$ is endowed with the trivial $H$-coaction.  If the Galois map
$\beta_{\vp}\colon   B\wedge_{A}B \to B\wedge H$ (defined as above) and the natural map from $A$ to (an appropriately defined model of) the homotopy coinvariants of the $H$-coaction on $B$ are both weak equivalences, then $\vp\colon   A\to B$ is a homotopic $H$-Hopf-Galois extension in the sense of Rognes.

The unit map $\eta$ from the sphere spectrum $S$ to the complex cobordism spectrum $MU$ is an $S[BU]$-Hopf-Galois extension in this homotopic  sense. The diagonal $\Delta\colon  BU\to BU\times BU$ induces the comultiplication $S[BU]\to S[BU]\wedge S[BU]$, the Thom diagonal $ MU\to MU\wedge BU_{+}$ gives rise to the coaction of $S[BU]$ on $MU$, and $\beta_{\eta}\colon  MU\wedge MU \xrightarrow \sim  MU\wedge S[BU]$ is the Thom equivalence.

In  \cite[Proposition 12.1.8]{rognes}, Rognes provided a descent-type characterization of homotopic Hopf-Galois extensions.  Let $A^{\wedge}_{B}$ denote Carlsson's derived completion of $A$ along $B$ \cite{carlsson}. Rognes proved that if $\vp\colon  A\to B$ is such that $\beta_{\vp}$ is a weak equivalence, then it is a homotopic $H$-Hopf-Galois extension if and only if the natural map $A\to A^{\wedge}_{B}$ is a weak equivalence, which  holds if, for example, $B$ is faithful and dualizable over $A$ \cite[Lemma 8.2.4]{rognes}.

\subsection{Structure of this paper}

We begin in Section \ref{sec:morita} by summarizing from \cite{berglund-hess:morita}   those elements of the homotopical Morita theory of modules and comodules in a monoidal model category that are necessary in this paper.  In particular we recall conditions under which a morphism of corings induces a Quillen equivalence of the associated comodule categories (Corollary \ref{cor:QE conditions}).  

In Section \ref{sec:rel H-G}  we introduce a new theory of relative Hopf-Galois extensions, insisting on the global categorical picture.  We first treat the classical case, then introduce the homotopic version, providing relatively simple conditions under which a morphism of comodule algebras in a monoidal model category is a relative homotopic Hopf-Galois extension (Proposition \ref{prop:hhg conditions}).

We furnish a concrete illustration of the theory  of relative homotopic Hopf-Galois extensions in Section \ref{sec:ch cx}, where we consider the monoidal model category $\Ch_{R}$  of unbounded chain complexes over a commutative ring $R$.  After recalling from \cite{berglund-hess:morita}   the homotopy theory of modules and comodules in this case, we elaborate the homotopy theory of comodule algebras in $\Ch_{R}$, recalling the necessary existence result for model category structures from \cite{hkrs}, then describing and studying a particularly useful fibrant replacement functor, given by the cobar construction (Theorem \ref{thm:fib-repl}). Finally, we describe in detail the theory of relative homotopic Hopf-Galois extensions of differential graded algebras over a commutative ring $R$.  In particular we  establish the existence of a useful family of relative homotopic Hopf-Galois extensions analogous to the classical normal extensions (Proposition \ref{prop:hyp2}). We apply this family to proving, under reasonable hypotheses, that a morphism of comodule algebras is a relative homotopic Hopf-Galois extension if and only if it satisfies effective homotopic descent (Proposition \ref{prop:rognes-analogue}), a result analogous to \cite[Proposition 12.1.8]{rognes} for commutative ring spectra. As a consequence we establish an intriguing relationship between Hopf-Galois extensions and Koszul duality, implying in particular that, under reasonable hypotheses, if $A\to B$ is a homotopic Hopf-Galois extension with respect to some Hopf algebra $H$, where $B$ is contractible, then $H$ is Koszul dual to $A$ (Proposition \ref{prop:koszul}).  Finally, we explain how to associate a homotopic Hopf-Galois extension in differential graded setting naturally to a nice enough principal fibration of simplicial sets (Proposition \ref{prop:prin-fib}) and show that Koszul duality has a familiar form in this case (Remark \ref{rmk:Koszul-prinfib}).

\subsection{Conventions}
\begin{itemize}
\item All forgetful functors are denoted $\UU$.
\item Let $\adjunction \CC \DD LR$ be an adjoint pair of functors.  If $\CC$ is endowed with a model category structure, and  $\DD$ admits a model category structure for which the fibrations and weak equivalences are created in $\CC$, i.e., a morphism in $\DD$ is a fibration (respectively, weak equivalence) if and only if its image under $R$ is a fibration (respectively, weak equivalence) in $\CC$, then we say that it is \emph{right-induced} by the functor $R$.  Dually, if $\DD$ is endowed with a model category structure, and $\CC$ admits a model category structure for which the cofibrations and weak equivalences are created in $\DD$, i.e., a morphism in $\CC$ is a cofibration (respectively, weak equivalence) if and only if its image under $L$ is a cofibration (respectively, weak equivalence) in $\DD$, then we say that it is \emph{left-induced} by the functor $L$.
\end{itemize}

\section{Elements of homotopical Morita theory}\label{sec:morita}
In this section we recall from \cite {berglund-hess:morita} those elements of homotopical Morita theory for modules and comodules that are necessary for our study of homotopic Hopf-Galois extensions in monoidal model categories. Since the definitions and results in \cite{berglund-hess:morita} are couched in a more general framework than we need in this article, we specialize somewhat here, for the reader's convenience.

\subsection{Homotopy theory of modules}

Let $(\VV,\tensor,\kk)$ be a monoidal category. Let $\Alg_{\VV}$ denote the category of algebras in $\VV$, i.e., of  objects $A$ in $\VV$ together with two maps $\mu\colon A\tensor A\rightarrow A$ and $\eta\colon \kk\rightarrow A$ that satisfy the usual associativity and unit axioms. Dually, the category of \emph{coalgebras} in $\VV$, i.e., objects in $\VV$ that are endowed with a coassociative comultiplication and a counit, is denoted $\Coalg_{\VV}$.

A right (respectively, left) module over  an algebra $A$ is an object $M$ in $\VV$ together with a map $\rho\colon M\tensor A\rightarrow M$ (respectively, $\lambda \colon A\otimes M \to M$) satisfying the usual axioms for an action. We let $\VV_A$ (respectively, ${}_{A}\VV$) denote the category of right  (respectively, left) $A$-modules in $\VV$.  {We usually omit the multiplication and unit from the notation for an algebra and the action map from the notation for an $A$-module.}

Schwede and Shipley established reasonable conditions, satisfied by many model categories of interest, under which module categories inherit a model category structure from the underlying category.

\begin{theorem}\cite[Theorem 4.1]{schwede-shipley} \label{theorem:module model}
Let $\VV$ be a symmetric monoidal model category. If $\VV$ is cofibrantly generated and satisfies the monoid axiom, and every object of $\VV$ is small relative to the whole category, then the category $\VV_A$ of right $A$-modules admits a model structure that is right induced from the adjunction
$$\bigadjunction{\VV}{\VV_A}{-\tensor A}{\UU},$$
and similarly for the category ${}_{A}\VV$ of left $A$-modules.
\end{theorem}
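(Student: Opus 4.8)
The statement to be proved is Schwede--Shipley's Theorem \ref{theorem:module model}, asserting that under the stated hypotheses (cofibrantly generated symmetric monoidal model category satisfying the monoid axiom, all objects small) the category $\VV_A$ of right $A$-modules carries a model structure right-induced along the free-forgetful adjunction $-\tensor A\dashv\UU$. The plan is to invoke the standard transfer (right-induction) machinery for cofibrantly generated model categories. Concretely, if $I$ and $J$ are sets of generating cofibrations and generating acyclic cofibrations for $\VV$, then one proposes $I_A:=\{i\tensor A : i\in I\}$ and $J_A:=\{j\tensor A : j\in J\}$ as generating (acyclic) cofibrations for $\VV_A$, defines weak equivalences and fibrations in $\VV_A$ to be the morphisms sent by $\UU$ to weak equivalences and fibrations in $\VV$, and then checks Kan's recognition criteria.

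First I would record that $\VV_A$ has all small limits and colimits: limits and filtered colimits are created by $\UU$, and the remaining colimits exist by a reflexive-coequalizer argument using that $-\tensor A$ preserves colimits. Next, the two-out-of-three property and closure of the putative weak equivalences, fibrations, and acyclic fibrations under retracts are immediate since $\UU$ reflects all three and these classes have the corresponding properties in $\VV$. The adjunction $(-\tensor A)\dashv\UU$ gives the lifting/factorization adjointness: a map in $\VV_A$ has the right lifting property against $I_A$ (resp.\ $J_A$) iff its image under $\UU$ has the right lifting property against $I$ (resp.\ $J$), hence iff it is an acyclic fibration (resp.\ fibration) in $\VV$; this identifies the $I_A$-injectives and $J_A$-injectives correctly. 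Smallness of every object of $\VV$, preserved by $\UU$ (which preserves filtered colimits) and hence inherited by objects of $\VV_A$, supplies the smallness hypotheses needed to run Quillen's small object argument, producing functorial factorizations as $(I_A\text{-cell})\circ(I_A\text{-injective})$ and $(J_A\text{-cell})\circ(J_A\text{-injective})$.

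The one genuinely non-formal point — and the step I expect to be the main obstacle — is showing that every relative $J_A$-cell complex is a weak equivalence in $\VV_A$, equivalently (since $\UU$ preserves these pushouts and transfinite compositions) that the image under $\UU$ of such a map is a weak equivalence in $\VV$. Here one cannot just quote that acyclic cofibrations in $\VV$ are closed under pushout and transfinite composition, because after applying $-\tensor A$ and pushing out in $\VV_A$ the underlying map in $\VV$ is no longer a pushout of $j\in J$ but a pushout of $j\tensor A$ along an arbitrary module map; the resulting map need not be a cofibration of $\VV$ at all. This is precisely what the \emph{monoid axiom} is designed to control: it guarantees that the class of maps obtained from $\{j\tensor A : j\in J\}$ (more generally from $\{(\text{acyclic cofibration})\tensor(\text{object})\}$) by cobase change and transfinite composition consists of weak equivalences in $\VV$. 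So the argument is: a relative $J_A$-cell complex has underlying map lying in the class generated under pushout and transfinite composition by $J\tensor\VV$, which by the monoid axiom consists of weak equivalences, hence $\UU$ of a relative $J_A$-cell complex is a weak equivalence; this is the missing recognition hypothesis. With that in hand, Kan's theorem yields the cofibrantly generated model structure on $\VV_A$, and by construction it is right-induced from $-\tensor A\dashv\UU$. The argument for left $A$-modules is identical after replacing $-\tensor A$ by $A\tensor-$, using symmetry of $\VV$.
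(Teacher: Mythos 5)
The paper does not prove this statement: it is quoted verbatim from Schwede--Shipley \cite[Theorem 4.1]{schwede-shipley} and used as a black box. Your reconstruction follows the standard argument of that reference essentially exactly --- transfer along $-\tensor A\dashv\UU$ via Kan's recognition theorem, with the monoid axiom correctly identified as the one non-formal ingredient needed to show that relative $J_A$-cell complexes have underlying weak equivalences --- so it is correct and matches the source.
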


Categories of $A$-modules often admit left-induced structures as well.

\begin{theorem}\label{thm:a-mod-cylinder}\cite[Theorem 2.2.3]{hkrs} Let  $(\VV,\otimes, \kk)$ be a locally presentable, closed monoidal model category in which the mon\-oid\-al unit $\kk$ is cofibrant.   If $A$ is a monoid in $\VV$ such that the category $\VV_A$ of right $A$-modules admits underlying-cofibrant replacements (e.g., if all objects of $\VV$ are cofibrant),
then $\VV_A$ admits a model structure left-induced from the forgetful/hom-adjunction
\[ \xymatrix@C=4pc{ \VV_A \ar@<1ex>[r]^U \ar@{}[r]|\perp & \VV. \ar@<1ex>[l]^{\hom(A,-)}}\]
\end{theorem}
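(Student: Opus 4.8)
The plan is to derive the theorem from the general recognition criterion for left-induced model structures of \cite{hkrs}, after two preliminary observations. First, $\VV_A$ is locally presentable and bicomplete: since $\VV$ is closed monoidal, the endofunctor $-\tensor A$ preserves all colimits and is therefore accessible, and $\VV_A$ is exactly the category of algebras for the resulting accessible monad $-\tensor A$ on the locally presentable category $\VV$; hence $\VV_A$ is again locally presentable, and the forgetful functor $\UU\colon \VV_A\to\VV$ creates all limits and colimits. Second, $\VV$, being a locally presentable model category, is in particular an accessible model category, so that the recognition criterion applies to the adjunction $\UU\dashv\hom(A,-)$ of the statement.

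With these in place, the criterion of \cite{hkrs} asserts that $\VV_A$ carries the model structure left-induced along $\UU$ — cofibrations and weak equivalences created by $\UU$, and fibrations characterised by the right lifting property against the left-induced trivial cofibrations — if and only if the \emph{acyclicity condition} holds: every morphism $p$ of $\VV_A$ that has the right lifting property against every morphism $f$ with $\UU f$ a cofibration in $\VV$ satisfies that $\UU p$ is a weak equivalence in $\VV$; moreover, when this holds the left-induced structure is automatically accessible. Thus the proof reduces entirely to verifying acyclicity.

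Verifying acyclicity is the heart of the matter, and it is here that the hypotheses that $\kk$ is cofibrant and that $\VV_A$ admits underlying-cofibrant replacements are indispensable. The difficulty is that $A$ is not assumed cofibrant, so that neither the free-module functor $-\tensor A$ nor the cofree-module functor $\hom(A,-)$ behaves as a Quillen functor for the putative structure: the images under $-\tensor A$ of the generating cofibrations of $\VV$ need not be left-induced cofibrations, and $\hom(A,-)$ need not carry weak equivalences of $\VV$ to left-induced weak equivalences. Given $p\colon X\to Y$ with the right lifting property against all left-induced cofibrations, I would therefore use a functorial underlying-cofibrant replacement $q_M\colon QM\to M$ in $\VV_A$ — for which $\emptyset\to QM$ is a left-induced cofibration and $\UU q_M$ a weak equivalence — to rectify these functors on underlying-cofibrant objects, and then construct, from the cylinder and path objects of the underlying-cofibrant approximations to $X$ and $Y$ transported into $\VV_A$ (the latter via $\hom(A,-)$), enough left-induced cofibrations against which $p$ may be tested, the resulting lifts exhibiting $\UU p$ as a homotopy equivalence, hence a weak equivalence, in $\VV$. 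The main obstacle, as signalled, is precisely this rectification and the attendant manufacture of sufficiently many left-induced (trivial) cofibrations in $\VV_A$ to pin down $p$ while $A$ remains non-cofibrant; the underlying-cofibrant-replacement hypothesis is exactly what makes it possible. Once acyclicity is established, the recognition criterion of \cite{hkrs} delivers the left-induced model structure, completing the proof.
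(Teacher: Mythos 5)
This statement is quoted from \cite{hkrs} (Theorem 2.2.3 there); the paper gives no proof of its own, so your proposal has to be measured against the argument in the cited source. Your first two steps are fine and match that argument: $\VV_A$ is locally presentable because it is the category of algebras for the accessible monad $-\otimes A$, and the existence of the left-induced structure reduces, by the recognition theorem of \cite{hkrs}, to the acyclicity condition for the adjunction $\UU\dashv\hom(A,-)$, which you state correctly.

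The gap is in the verification of acyclicity, which is the entire substance of the theorem and which you explicitly leave as an ``obstacle'' with only a vague plan. The argument in \cite{hkrs} is the dual Quillen \emph{cylinder object} argument: acyclicity follows once every object of $\VV_A$ admits an underlying-cofibrant replacement (a hypothesis) \emph{and} every underlying-cofibrant $A$-module $M$ admits a good cylinder object, i.e.\ a factorization $M\sqcup M\rightarrowtail \operatorname{Cyl}(M)\xrightarrow{\ \sim\ } M$ in $\VV_A$ whose first map is an underlying cofibration and whose second is an underlying weak equivalence. The missing key construction is $\operatorname{Cyl}(M)=M\otimes \operatorname{Cyl}(\kk)$, where $\kk\sqcup\kk\rightarrowtail\operatorname{Cyl}(\kk)\xrightarrow{\sim}\kk$ is a cylinder on the unit: cofibrancy of $\kk$ guarantees that $\kk\sqcup\kk$ and $\operatorname{Cyl}(\kk)$ are cofibrant, so the pushout-product axiom makes $M\otimes(\kk\sqcup\kk)\to M\otimes\operatorname{Cyl}(\kk)$ a cofibration and Ken Brown's lemma makes $M\otimes\operatorname{Cyl}(\kk)\to M$ a weak equivalence, for $M$ underlying cofibrant; the $A$-action is inherited from $M$. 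This is precisely where the hypotheses ``closed monoidal'' and ``$\kk$ cofibrant'' are consumed, and you assert their indispensability without ever using them. Moreover, your plan to manufacture test cofibrations by transporting \emph{path} objects into $\VV_A$ via $\hom(A,-)$ does not work as stated: $\hom(A,-)$ cannot be known to send weak equivalences or fibrations of $\VV$ to anything useful before the model structure on $\VV_A$ exists, and the left-induction argument needs cylinders, not path objects. As written, the proposal identifies the right framework but omits the one construction that makes the acyclicity condition checkable.
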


\begin{convention}\label{conv:module} Henceforth, we assume always that $\VV$ is a symmetric monoidal model category and that for every algebra $A$, the categories $\VV_{A}$ and ${}_{A}\VV$ of right and left $A$-modules are equipped with model category structures with weak equivalences created in the underlying category $\VV$.
\end{convention}

The tensor product of a right and a left $A$-module over $A$ is construction that appears frequently in this article.

\begin{definition}\label{defn:tensor-over} Given right and left $A$-modules $M_A$ and ${}_AN$, with structure maps $\rho\colon M\tensor A\rightarrow M$ and $\lambda\colon A\tensor N\rightarrow N$, their \emph{tensor product over $A$} is the object $M\tensor_A N$ in $\VV$ defined by the following coequalizer diagram:
$$
\coequalizer{M\tensor A\tensor N}{M\tensor N}{M\tensor_A N}{\rho\tensor 1}{1\tensor \lambda}.
$$
\end{definition}

The special classes of modules defined below, which are characterized in terms of tensoring over $A$, play an important role in this article.

\begin{definition}\label{defn:special-modules} Let $\VV$ be a symmetric monoidal model category satisfying Hypothesis \ref{conv:module}.
A left $A$-module $M$ is called
\begin{itemize}
\item \emph{homotopy flat} if $-\tensor_A M\colon\VV_{A} \to \VV$ preserves weak equivalences;
\item \emph{strongly homotopy flat} if it is homotopy flat and for every finite category $\mathsf J$ and every functor $\Phi\colon \mathsf J \to \VV_{A}$, the natural map
$$(\lim_{\mathsf J} \Phi)\otimes_{A} M \to  \lim_{\mathsf J} (\Phi\otimes_{A} M)$$
is a weak equivalence in $\VV$;
\item \emph{homotopy faithful} if $-\tensor_A M\colon\VV_{A} \to \VV$ reflects weak equivalences;
\item \emph{homotopy faithfully flat} if it is both homotopy faithful and strongly homotopy flat;
\item \emph{homotopy projective} if $\Map_A(M,-)\colon {}_A\VV \to \VV$ preserves weak equivalences;
\item\emph{homotopy cofaithful} if $\Map_A(M,-)\colon {}_A\VV \to \VV$ reflects weak equivalences.
\end{itemize}
Right modules of the same types are defined similarly.
\end{definition}

It is also useful to distinguish those weak equivalences of  left (respectively, right) $A$-modules that remain weak equivalences upon tensoring over $A$ with any right (respectively, left) $A$-module.  

\begin{definition} \label{def:pure we}
A morphism of left $A$-modules $f\colon N\to N'$ is a \emph{pure weak equivalence} if the induced map $M\tensor_{A} f\colon M\tensor_A N\rightarrow M\tensor_A N'$ is a weak equivalence for all cofibrant right $A$-modules $M$.  Pure weak equivalences of right $A$-modules are defined analogously.
\end{definition}

It is easier to work in monoidal model categories in which cofibrant modules are homotopy flat, fitting our intuition of cofibrancy as a sort of projectivity.

\begin{definition}\label{definition:CHF} Let $\VV$ be a symmetric monoidal model category satisfying Hypothesis \ref{conv:module}.
We say that $\VV$ satisfies the \emph{CHF hypothesis} if for every algebra $A$ in $\VV$, every cofibrant right $A$-module is homotopy flat.
\end{definition}

As pointed out in \cite[\S 4]{schwede-shipley}, the CHF hypothesis holds in many monoidal model categories of interest, such as the categories of simplicial sets equipped with usual Kan model structure, symmetric spectra equipped with the stable model structure, (bounded or unbounded) chain complexes over a commutative ring equipped with the projective model structure, and $S$-modules equipped with the usual model structure. The following proposition highlights one of the advantages of this hypothesis.

\begin{proposition} \label{prop:pure we alt}\cite[Proposition 2.16]{berglund-hess:morita}
Let $\VV$ be a symmetric monoidal model category satisfying Hypothesis \ref{conv:module}. If $\VV$ satisfies the CHF hypothesis, then the notions of pure weak equivalence and weak equivalence coincide for modules over any algebra $A$.
\end{proposition}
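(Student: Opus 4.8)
The plan is to prove the two implications separately. The implication ``weak equivalence $\Rightarrow$ pure weak equivalence'' is immediate: if $f\colon N\to N'$ is a weak equivalence of left $A$-modules and $M$ is a cofibrant right $A$-module, then $M$ is homotopy flat by the CHF hypothesis, so $M\tensor_A f$ is a weak equivalence, and hence $f$ is a pure weak equivalence in the sense of Definition \ref{def:pure we}. Along the way we will also use the left-module analogue of the CHF hypothesis, obtained by applying it to $A^{\mathrm{op}}$, which says that every cofibrant \emph{left} $A$-module is homotopy flat.

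For the converse, the naive idea of substituting $M=A$ into the definition of pure weak equivalence fails because $A$ need not be cofibrant as a right $A$-module. So instead I would fix a weak equivalence $q\colon \tilde A\to A$ of right $A$-modules with $\tilde A$ cofibrant, and the key step is to show that for \emph{every} left $A$-module $N$ — not assumed cofibrant — the induced map $q\tensor_A N\colon \tilde A\tensor_A N\to A\tensor_A N\cong N$ is a weak equivalence. To prove this, choose a weak equivalence $p\colon \tilde N\to N$ with $\tilde N$ a cofibrant left $A$-module, and consider the commutative square with corners $\tilde A\tensor_A\tilde N$, $A\tensor_A\tilde N$, $\tilde A\tensor_A N$, $A\tensor_A N$, with horizontal maps induced by $q$ and vertical maps induced by $p$. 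Its top edge $q\tensor_A\tilde N$ is a weak equivalence because $\tilde N$ is homotopy flat (CHF for $A^{\mathrm{op}}$); its left edge $\tilde A\tensor_A p$ is a weak equivalence because $\tilde A$ is homotopy flat (CHF for $A$); and its right edge is $p$ up to the canonical isomorphism $A\tensor_A(-)\cong\Id$, hence also a weak equivalence. Two-out-of-three then forces the bottom edge $q\tensor_A N$ to be a weak equivalence.

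Granting the key step, the converse is formal: given a pure weak equivalence $f\colon N\to N'$, applying the natural transformation $q\tensor_A(-)$ to $f$ yields a commutative square whose top map $\tilde A\tensor_A f$ is a weak equivalence (since $\tilde A$ is cofibrant and $f$ is a pure weak equivalence) and whose vertical maps $q\tensor_A N$ and $q\tensor_A N'$ are weak equivalences by the key step; hence $f$ is a weak equivalence by two-out-of-three, which by Hypothesis \ref{conv:module} is precisely to say that $f$ is a weak equivalence of left $A$-modules. The only genuine obstacle is the key step — proving $\tilde A\tensor_A N\to N$ is a weak equivalence for an arbitrary left module $N$ — which is exactly the point where one must invoke the CHF hypothesis twice, for $A$ and for $A^{\mathrm{op}}$, rather than assuming $N$ cofibrant; everything else is bookkeeping with two-out-of-three and the unit isomorphism for $\tensor_A$. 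The right-module case is identical, with the roles of left and right reversed.
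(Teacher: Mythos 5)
Your proof is correct and is essentially the argument behind \cite[Proposition 2.16]{berglund-hess:morita}, which this paper cites rather than reproves: one direction is immediate from the CHF hypothesis, and the converse is exactly the double cofibrant replacement (of $A$ as a right $A$-module and of $N$ as a left $A$-module), homotopy flatness on both sides, the unit isomorphism $A\tensor_A(-)\cong\Id$, and two-out-of-three. The only point worth making explicit is the one you already flag: invoking CHF for cofibrant \emph{left} $A$-modules requires identifying the model structure on ${}_A\VV$ postulated in Hypothesis~\ref{conv:module} with the one on $\VV_{A^{\mathrm{op}}}$, which is the intended reading.
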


Our interest in pure weak equivalences is motivated by the next proposition, for which we need to establish a bit of terminology.

\begin{definition} Let $\VV$ be a monoidal category, and let $\varphi\colon A\to B$ be a morphism of algebras in $\VV$.   The restriction/\-extension-of-scalars adjunction,
$$\bigadjunction{\VV_A}{\VV_B}{\vp_{*}}{\vp^*}$$
is defined on objects by $\vp_{*}(M)= M\otimes _{A}B$, endowed with right $B$-action given by multiplication in $B$, for all right $A$-modules $M$, while $\vp^*(N)$ has the same underlying object, but with right $A$-action given by the composite
$$N\otimes A \xrightarrow{N\otimes \vp} N\otimes B \xrightarrow \rho N.$$
\end{definition}

\begin{remark} It is a classical result that $\vp^*$ is right adjoint to $\vp_{*}$.  Moreover, under Hypothesis \ref{conv:module},  the adjunction $\vp_{*}\dashv \vp^*$ is a Quillen pair if and only if $\vp^*$ preserves fibrations.
\end{remark}

We can now formulate a necessary and sufficient condition under which a Quillen pair $\vp_{*}\dashv \vp^*$ is actually a Quillen equivalence. 

\begin{proposition} \label{prop:resext}\cite[Proposition 2.17]{berglund-hess:morita} Let $\VV$ be a symmetric monoidal model category satisfying Hypothesis \ref{conv:module}, and let $\varphi\colon A\to B$ be a morphism of algebras in $\VV$ such that $\vp^*$ preserves fibrations.  The restriction/\-extension-of-scalars adjunction,
$$\bigadjunction{\VV_A}{\VV_B}{\vp_{*}}{\varphi^*},$$
is a Quillen equivalence if and only if $\varphi\colon A\rightarrow B$ is a pure weak equivalence of right $A$-modules.
\end{proposition}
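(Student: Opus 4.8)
The plan is to reduce the statement to a standard recognition criterion for Quillen equivalences and then to identify the unit of the adjunction explicitly.

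First I would record that $\vp_{*}\dashv\vp^{*}$ is a Quillen pair: this is exactly the content of the remark preceding the statement, given the hypothesis that $\vp^{*}$ preserves fibrations. Moreover, since $\vp^{*}$ is the identity on underlying objects of $\VV$ and weak equivalences in both $\VV_{A}$ and $\VV_{B}$ are created in $\VV$ (Hypothesis \ref{conv:module}), the functor $\vp^{*}$ both preserves and reflects all weak equivalences; in particular it reflects weak equivalences between fibrant objects. By the standard recognition criterion for Quillen equivalences, in its sharpened form valid when the right adjoint reflects weak equivalences between fibrant objects, the pair $\vp_{*}\dashv\vp^{*}$ is a Quillen equivalence if and only if the derived unit $M\to\vp^{*}\big(R_{B}\vp_{*}M\big)$ is a weak equivalence for every cofibrant right $A$-module $M$, where $R_{B}$ denotes a fibrant replacement in $\VV_{B}$. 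Since $\vp^{*}$ preserves weak equivalences, the comparison map $\vp^{*}\vp_{*}M\to\vp^{*}(R_{B}\vp_{*}M)$ is a weak equivalence, so by two-out-of-three the derived unit is a weak equivalence if and only if the strict unit $\eta_{M}\colon M\to\vp^{*}\vp_{*}M$ is. Thus $\vp_{*}\dashv\vp^{*}$ is a Quillen equivalence precisely when $\eta_{M}$ is a weak equivalence in $\VV$ for every cofibrant $M\in\VV_{A}$.

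Next I would identify this unit. The object $\vp^{*}\vp_{*}M$ has underlying $\VV$-object $M\tensor_{A}B$, and under the canonical isomorphism $M\cong M\tensor_{A}A$ the unit $\eta_{M}$ is identified with the map $M\tensor_{A}\vp\colon M\tensor_{A}A\to M\tensor_{A}B$ induced by $\vp\colon A\to B$. Consequently, "$\eta_{M}$ is a weak equivalence for all cofibrant right $A$-modules $M$" is exactly the condition that $\vp\colon A\to B$ be a pure weak equivalence in the sense of Definition \ref{def:pure we}. Combining this with the previous paragraph proves the proposition.

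The argument is essentially formal, so there is no genuinely hard step; the two points requiring care are (i) invoking the recognition criterion in the sharpened form—this is what allows the derived unit to be replaced by the strict unit $\eta_{M}$—and (ii) verifying that the unit of the restriction/extension-of-scalars adjunction really is $M\tensor_{A}\vp$, which is a routine check from the defining coequalizer of $\tensor_{A}$ and the triangle identities for $\vp_{*}\dashv\vp^{*}$.
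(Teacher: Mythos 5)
Your proof is correct and is the standard argument: because $\vp^*$ is the identity on underlying objects and weak equivalences in both $\VV_A$ and $\VV_B$ are created in $\VV$, the right adjoint preserves and reflects all weak equivalences, so the recognition criterion reduces the Quillen-equivalence condition to the strict unit, which you correctly identify with $M\otimes_A\vp$, i.e.\ with purity of $\vp$ in the sense of Definition~\ref{def:pure we}. The paper itself gives no proof here---the proposition is quoted from \cite{berglund-hess:morita}---so there is nothing in the text to compare against, but your route is the natural one and I see no gap.
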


\begin{remark}
The proposition above is a special case of Theorem 2.24 in \cite{berglund-hess:morita}, which provides necessary and sufficient conditions for an adjunction between $\VV_A$ and $\VV_B$ governed by an $A$-$B$-bimodule to be a Quillen equivalence.
\end{remark}

\subsection{Homotopical Morita theory for comodules}

\subsubsection{Review of corings and their comodules}
Let $\VV$ be a monoidal category.  For every  algebra $A$ in $\VV$, the tensor product $-\tensor_A -$ endows the category of $A$-bimodules ${}_A\VV_A$ with a (not necessarily symmetric) monoidal structure, for which the unit is $A$, viewed as an $A$-bimodule over itself.

\begin{definition}
An \emph{$A$-coring} is a coalgebra in the monoidal category $({}_A\VV_A,\tensor_A,A)$, i.e., an $A$-bimodule $C$ together with maps of $A$-bimodules $\Delta\colon C \rightarrow C\tensor_A C$ and $\epsilon\colon C\rightarrow A$, such that the diagrams
$$
\xymatrix{C \ar[r]^-{\Delta} \ar[d]_-{\Delta} & C\tensor_A C \ar[d]^-{C\tensor \Delta} \\ C\tensor_A C \ar[r]^-{\Delta\tensor C} & C\tensor_A C \tensor_A C} \quad \quad \xymatrix{C\ar[r]^-{\Delta} \ar[d]_-{\Delta} \ar@{=}[dr] & C\tensor_A C \ar[d]^-{C\tensor \epsilon} \\ C\tensor_A C\ar[r]_-{\epsilon\tensor C} & C}
$$
are commutative. A \emph{morphism of $A$-corings} is a map of $A$-bimodules $f\colon C\rightarrow D$ such that the diagrams
$$\xymatrix{C\ar[r]^-{\Delta_C} \ar[d]_-f & C\tensor_A C\ar[d]^-{f\tensor_A f} \\ D \ar[r]^-{\Delta_D} & D\tensor_A D}\quad \quad \quad  \xymatrix{C\ar[r]^-{\epsilon_C} \ar[d]_-f & A \ar@{=}[d] \\ D \ar[r]^-{\epsilon_D} & A}$$
commute.
\end{definition}

In Section \ref{sec:rel H-G} we provide natural constructions of families of corings.  For the moment we note only that any algebra $A$ can be seen in a trivial way as a coring over itself, where the comultiplication is the isomorphism $A \xrightarrow{\cong} A\otimes_{A}A$ and the counit is the identity. 

A more general notion of morphism of corings takes into account changes of the underlying algebra as well. Note first that if $\varphi\colon A\rightarrow B$ is a morphism of algebras, then there is a two-sided extension/restriction-of-scalars adjunction,
$$\bigadjunction{{}_A\VV_A}{{}_B\VV_B}{\varphi_*}{\varphi^*},\quad \varphi_* \dashv \varphi^*,$$
where $\varphi_*(M) = B\tensor_A M \tensor_A B$. Moreover, $\varphi_{*}$ is an op-monoidal functor, i.e., there is a natural transformation
$$\varphi_*(M\tensor_A N) \rightarrow \varphi_*(M)\tensor_B \varphi_*(N),$$
which allows us to endow $\varphi_*(C)$ with the structure of a $B$-coring whenever $C$ is an $A$-coring. 

\begin{remark} \label{rmk:descentcoring} Note that if $A$ is considered as an $A$-comodule, where $A$ is equipped with the trivial coring structure defined above, then $\varphi_{*}(A)$ is exactly the well known \emph{descent} or \emph{canonical coring} associated to the algebra morphism $\vp$, with underlying $B$-bimodule $B\otimes_{A}B$. 
\end{remark}

\begin{definition} \label{def:coring}
A \emph{coring} in $\VV$ is a pair $(A,C)$ where $A$ is an algebra in $\VV$, and $C$ is an $A$-coring. A \emph{morphism of corings} $(A,C)\rightarrow (B,D)$ is a pair $(\varphi,f)$ where $\varphi\colon A\rightarrow B$ is a morphism of algebras, and $f\colon \vp_*(C)\rightarrow D$ is a morphism of $B$-corings. The category of corings in $\VV$ is denoted $\Coring_{\VV}$.
\end{definition}

We now recall the definition of a comodule over a coring.

\begin{definition} Let $(A,C)$ be a coring in $\VV$, with comultiplication $\Delta$ and counit $\epsilon$. 
A right \emph{$(A,C)$-comodule} is a right $A$-module $M$ together with a morphism of right $A$-modules
$\delta \colon M\rightarrow M\tensor_A C$ such that the diagrams
$$
\xymatrix{M \ar[r]^-{\delta} \ar[d]_-{\delta} & M\tensor_A C \ar[d]^-{M\tensor \Delta} \\ M\tensor_A C \ar[r]^-{\delta\tensor C} & M\tensor_A C \tensor_A C} \quad \quad \xymatrix{M\ar[r]^-{\delta} \ar@{=}[dr] & M\tensor_A C \ar[d]^-{M\tensor \epsilon} \\ & M}
$$
are commutative. A \emph{morphism of $(A,C)$-comodules} is a morphism $f\colon M\rightarrow N$ of right $A$-modules such that the diagram
$$
\xymatrix{M \ar[r]^-{\delta_M} \ar[d]_-f & M\tensor_A C \ar[d]^-{f\tensor C} \\ N \ar[r]^-{\delta_N} & N\tensor_A C}
$$
commutes.
We let $\VV_A^C$ denote the category of right $(A,C)$-comodules.  The category ${}_{A}^{C}\VV$ of left $(A,C)$-comodules is defined analogously.  
\end{definition}

\begin{remark}\label{rmk:factor} Every morphism of corings $(\vp, f)\colon  (A,C)\to (B,D)$ factors in $\Coring_{\VV}$
as
$$
\xymatrix{(A,C) \ar[rr]^-{(\vp, \Id_{\vp_{*}(C)})} \ar[drr]_-{(\vp,f)} && (B,\vp_{*}(C)) \ar[d]^-{(\Id_{B},f)} \\
&& (B,D),}
$$
i.e., as a change of rings, followed by a change of corings.  This easy observation is a very special case of  \cite[Proposition 3.31]{berglund-hess:morita}.
\end{remark}

There is an adjunction
$$\bigadjunction{\VV_A^C}{\VV_A}{\UU}{-\tensor_A C},\quad \UU_A \dashv -\tensor_A C,$$
where $\UU$ is the forgetful functor, and $-\otimes_{A}C$ is the \emph{cofree $C$-comodule functor}.  In particular, for any $A$-module $M$, the $C$-coaction on $M\otimes_{A}C$ is simply $M\otimes_{A} \Delta$.  

\begin{remark} Note that if $A$ is endowed with its trivial $A$-coring structure, then the adjunction above specializes to an isomorphism between $\VV_A^A$ and $\VV_{A}$.  It follows that the theory of comodules over corings englobes that of modules over algebras.
\end{remark}

Under reasonable conditions on $\VV$,  if $(\varphi,f)\colon (A,C)\rightarrow (B,D)$ is a morphism of corings, then the restriction/extension-of-scalars adjunction on the module categories lifts to an adjunction on the corresponding comodule categories.

\begin{proposition} \label{prop:bbadj}\cite[Proposition 3.16, Example 3.21]{berglund-hess:morita}
Let $\VV$ be a symmetric monoidal category that admits all reflexive coequalizers and coreflexive equalizers.  If $(A,C)$ is a coring in $\VV$ such that $\VV_A^C$ admits all coreflexive equalizers, then
every morphism of corings $(\varphi,f)\colon (A,C)\rightarrow (B,D)$  gives rise to an adjunction
$$\bigadjunction{\VV_A^C}{\VV_B^D}{(\vp,f)_{*}}{(\vp, f)^{*}}$$
such that the following diagram of left adjoints commutes.
$$\xymatrix{
\VV_A^C \ar[rr]^-{(\vp, f)_{*}} \ar[d]_-{\UU} && \VV_B^D \ar[d]^-{\UU} \\
\VV_A \ar[rr]^{\vp_{*}} && \VV_B}
$$
\end{proposition}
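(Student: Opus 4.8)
The plan is to realise the two comodule categories as Eilenberg--Moore categories of coalgebras over comonads, to transport the restriction/extension-of-scalars adjunction $\vp_*\dashv\vp^*$ along a comonad morphism determined by $(\vp,f)$, and then to obtain the right adjoint by a formal argument. Since $\VV$ admits reflexive coequalizers, all the relative tensor products below exist, and the cofree-comodule adjunction $\UU\dashv(-\tensor_A C)$ recalled above exhibits $\VV_A^C$ as the category of coalgebras over the comonad $\G_C:=-\tensor_A C$ on $\VV_A$: unwinding the definitions, an $(A,C)$-comodule is precisely a right $A$-module $M$ together with a counital, coassociative map $M\to M\tensor_A C$, which is exactly a $\G_C$-coalgebra. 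In particular $\UU\colon\VV_A^C\to\VV_A$ is comonadic, and likewise $\VV_B^D$ is the category of coalgebras over $\G_D:=-\tensor_B D$ on $\VV_B$ and $\UU\colon\VV_B^D\to\VV_B$ is comonadic.

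Next I would extract from the morphism of corings $(\vp,f)$ a natural transformation $\chi\colon\vp_*\G_C\Rightarrow\G_D\vp_*$ that makes $\vp_*\colon\VV_A\to\VV_B$ into a morphism of comonads. Its component at a right $A$-module $M$ is the composite
$$\vp_*(M\tensor_A C)\longrightarrow\vp_*(M)\tensor_B\vp_*(C)\xrightarrow{\ \vp_*(M)\tensor_B f\ }\vp_*(M)\tensor_B D,$$
where the first arrow is the op-monoidal comparison map for $\vp_*$ and $f\colon\vp_*(C)\to D$ is the $B$-coring morphism underlying $(\vp,f)$. A diagram chase, using the coherence of the op-monoidal structure of $\vp_*$ together with the two commuting squares expressing that $f$ is a morphism of corings, shows that $\chi$ is compatible with counits and comultiplications. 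Any morphism of comonads of this shape induces a functor on categories of coalgebras, and we take this to be
$$(\vp,f)_*\colon\VV_A^C\to\VV_B^D,\qquad(M,\delta_M)\longmapsto\bigl(\vp_*(M),\ \chi_M\circ\vp_*(\delta_M)\bigr).$$
By construction $\UU\circ(\vp,f)_*=\vp_*\circ\UU$, which is exactly the commuting square asserted in the statement.

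To produce the right adjoint I would apply the dual of Butler's adjoint lifting theorem to the commuting square
$$\xymatrix{\VV_A^C\ar[r]^-{(\vp,f)_*}\ar[d]_-{\UU}&\VV_B^D\ar[d]^-{\UU}\\\VV_A\ar[r]^-{\vp_*}&\VV_B.}$$
The vertical functors are comonadic by the previous paragraph, the bottom functor $\vp_*$ has a right adjoint (namely $\vp^*$), and by hypothesis $\VV_A^C$ admits all coreflexive equalizers; the theorem then furnishes a right adjoint $(\vp,f)^*$ to $(\vp,f)_*$, completing the proof. Unwinding the factorization of Remark \ref{rmk:factor} into a change of rings followed by a change of corings, one sees moreover that $(\vp,f)^*$ is computed concretely as an iterated cotensor product, i.e.\ a coreflexive equalizer in $\VV_A^C$ of a pair of maps built from the $D$-coaction and from $f$; the common retraction provided by the counits $\epsilon_C$ and $\epsilon_D$ is what makes this pair coreflexive, which is precisely why the stated hypothesis on $\VV_A^C$ is the relevant one.

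The one genuinely technical point is the verification in the second paragraph that $\chi$ is a morphism of comonads: this amounts to tracking the coherence cells for the op-monoidal structure of $\vp_*$ --- in particular the interaction of $\vp_*$ with the associativity isomorphisms for $\tensor_A$ --- alongside the coring-morphism squares for $f$, and checking that the $B$-coring structure transported onto $\vp_*(C)$ is glued to $D$ compatibly with both comultiplication and counit. Everything in the third paragraph is then a purely formal consequence of standard (co)monad-theoretic machinery, given the standing hypotheses on the existence of reflexive coequalizers and coreflexive equalizers in $\VV$ and of coreflexive equalizers in $\VV_A^C$.
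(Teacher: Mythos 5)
Your proposal is correct and follows essentially the same route as the cited proof in \cite{berglund-hess:morita}: there the left adjoint is likewise constructed explicitly so that it commutes with the comonadic forgetful functors (the coaction on $\vp_*(M)=M\tensor_A B$ being exactly your $\chi_M\circ\vp_*(\delta_M)$, as recorded in the remark following the proposition), and the right adjoint is produced by the dual adjoint lifting theorem, for which the hypothesis that $\VV_A^C$ admits coreflexive equalizers is precisely what is needed. The only cosmetic difference is that \cite{berglund-hess:morita} packages the comonad morphism as a ``braiding'' on the bimodule $B$ rather than as a colax morphism of comonads, but these are equivalent data.
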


\begin{remark}\label{remark:coref-eq} As explained in \cite[Remark 3.9]{berglund-hess:morita}, if $\VV$ is locally presentable, then $\VV_A^C$ admits all coreflexive equalizers. On the other hand,  the dual of \cite[Corollary 3]{linton} implies that if $-\otimes_{A}C\colon   \VV_{A}\to \VV_{A}$ preserves coreflexive equalizers, then $\VV_{A}^{C}$ admits all coreflexive equalizers.
\end{remark}

\begin{remark}  The commutativity of the square in the statement of Proposition \ref{prop:bbadj} implies that for any $C$-comodule $(M,\delta)$, the $B$-module underlying $(\vp, f)_{*}(M, \delta)$ is $M\otimes _{A}B$.  As shown in the proof of \cite[Proposition 3.16]{berglund-hess:morita} (in a somewhat more general context), the $D$-coaction on $M\otimes_{A}B$ is given by the following composite.
{\footnotesize$$\xymatrix{M\otimes _{A}B \ar [r]^(0.25){\delta\otimes B}&M\otimes_{A}C\otimes_{A}B\cong M\otimes _{A}A\otimes_{A}C\otimes_{A}B\ar [rr]^(0.6){M\otimes \vp \otimes C\otimes B}&&M\otimes_{A}B\otimes_{A}C\otimes_{A}B \ar[d]^{M\otimes f}\\
&&& M\otimes _{A}D\ar [d]^{\cong}\\
&&& M\otimes_{A}B\otimes_{B}D}$$}

Since the diagram of right adjoints must also commute, we know as well that the image under $(\vp, f)^{*}$ of a cofree $D$-comodule $N\otimes _{B}D$ is the cofree $C$-comodule $\vp^{*}(N)\otimes _{A}C$. 
\end{remark}

\begin{notation} When $(\vp,f)=(\Id_{A},f)\colon(A,C)\to (A,D)$,   we denote the induced adjunction
\begin{equation}\label{eq:corestriction}
\bigadjunction{\VV_{A}^{C}}{\VV_{A}^{D}}{f_{*}}{f^{*}}
\end{equation}
and call it the \emph{coextension/corestriction-of-coefficients adjunction} or \emph{change-of-corings adjunction} associated to $f$.  Note that the $D$-component of the counit of the $f_{*}\dashv f^{*}$ adjunction  is $f$ itself and that for every $(A,C)$-comodule $(M, \delta)$,
$$f_{*}(M,\delta)= \big(M, (1\otimes f)\delta\big).$$

When $(\vp, f)=(\vp, \Id_{\vp_{*}(C)})\colon (A,C)\to \big(B,\vp_{*}(C)\big)$, we denote the induced adjunction   
\begin{equation}\label{eq:canonical}
\bigadjunction{\VV_{A}^{C}}{\VV_{B}^{\vp_{*}(C)}}{\Can_{\vp}}{\Prim_{\vp}}
\end{equation}
and call it the \emph{canonical adjunction for $C$}, as a generalization of the usual canonical adjunction for descent  along $\vp\colon  A\to B$, which is the case $C=A$ of the adjunction above \cite{hess:descent}, \cite{mesablishvili}.
\end{notation}

\begin{remark}\label{rmk:factor2} By Remark \ref{rmk:factor}, the adjunction $(\vp, f)_{*}\dashv (\vp, f)^{*}$ can be factored as follows.
\begin{equation} \label{eq:braided factorization}
\xymatrix{ \VV_A^C \ar@<1ex>[rrr]^-{\Can _{\vp}} &&& \VV_B^{\vp_*(C)} \ar@<1ex>[lll]^-{\Prim_{\vp}} \ar@<1ex>[rrr]^-{f_*} &&& \VV_B^D. \ar@<1ex>[lll]^-{f^{*}}}
\end{equation}
\end{remark}

The right adjoint $(\vp, f)^{*}$ in the adjunction governed by a morphism of corings $(\vp, f)\colon  (A,C)\to (B,D)$ is difficult to describe in general. Under appropriate conditions on the left $A$-module underlying $C$, however, it is possible to express $(\vp, f)^*$ as a cotensor product over $D$, dually to the expression of the left adjoint in the extension/restriction-of-scalars adjunction associated to $\vp$ as a tensor product over $A$. The condition we need to impose on $C$ is formulated as follows.

\begin{definition}\label{definition:flat-coring}
A coring $(A,C)$ is \emph{flat}  if $-\tensor_A C\colon \VV_A\to \VV_A$ preserves coreflexive equalizers.
\end{definition}

Flatness  of a coring gives us control of coreflexive equalizers in the associated comodule category.

\begin{proposition}\cite[Proposition 3.28]{berglund-hess:morita}
If $(A,C)$ is a flat coring, then the forgetful functor $\UU\colon \VV_A^C\to \VV_A$ creates coreflexive equalizers. 
\end{proposition}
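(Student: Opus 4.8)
The plan is to deduce this from the general principle that a right adjoint creates all limits that the monad (or in this case, comonad) it sits over already preserves — here the relevant "limits" are coreflexive equalizers and the relevant comonad is $-\tensor_A C$ acting on $\VV_A$. First I would observe that the forgetful functor $\UU\colon \VV_A^C\to \VV_A$ is comonadic: this is essentially the content of the adjunction $\UU\dashv -\tensor_A C$ together with the fact that a $C$-comodule is precisely a $-\tensor_A C$-coalgebra, so $\VV_A^C$ is the Eilenberg–Moore category of coalgebras for the comonad $-\tensor_A C$ on $\VV_A$. (If one wants to be careful about when the comparison functor is an equivalence, note that since the coproduct/coaction structure literally defines comodules, $\VV_A^C$ *is* the category of coalgebras by construction, so there is nothing to check.)

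Next I would invoke the dual of the standard ``creation of limits'' result for (co)monadic functors: if $\UU\colon \mathsf D\to \mathsf E$ is the forgetful functor from the category of coalgebras for a comonad $G$ on $\mathsf E$, and $\mathsf E$ admits equalizers of some class $\mathcal K$ of parallel pairs which $G$ preserves, then $\mathsf D$ admits all such $\mathcal K$-equalizers and $\UU$ creates them. Concretely, given a coreflexive pair $f,g\colon M\rightrightarrows N$ in $\VV_A^C$, form the equalizer $E\to M$ in $\VV_A$. Flatness of $(A,C)$ says $-\tensor_A C$ preserves coreflexive equalizers, so $E\tensor_A C\to M\tensor_A C$ is again an equalizer of the corresponding pair. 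The common composite $E\to M\xrightarrow{\delta_M} M\tensor_A C$ equalizes $f\tensor C$ and $g\tensor C$ (because $f,g$ are comodule maps), hence factors uniquely through $E\tensor_A C$, giving a coaction $\delta_E\colon E\to E\tensor_A C$; the coassociativity and counit axioms for $\delta_E$ follow from those for $\delta_M$ by uniqueness of factorizations through the equalizers $E\tensor_A C$ and $E\tensor_A C\tensor_A C$ (the latter again an equalizer by applying flatness twice, or by noting $-\tensor_A C\tensor_A C$ also preserves the coreflexive equalizer). One checks $E\to M$ is then the equalizer in $\VV_A^C$ and that $\UU$ sends it to the chosen equalizer in $\VV_A$, with the usual universal-property bookkeeping showing creation.

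The main obstacle is essentially bookkeeping rather than a genuine difficulty: one must be slightly attentive to the fact that coreflexivity of the pair $(f,g)$ is what guarantees the relevant equalizers survive the two applications of $-\tensor_A C$, since the definition of flatness is stated precisely for \emph{coreflexive} equalizers and not arbitrary ones. The only other point requiring a word is that the equalizer $E\tensor_A C\tensor_A C$ of the twice-tensored pair is what one needs for coassociativity; this follows because $-\tensor_A C$ preserves the coreflexive equalizer $E\tensor_A C\to M\tensor_A C$ (the pair $(f\tensor C, g\tensor C)$ remains coreflexive, with common retraction induced by that of $(f,g)$), so iterating gives the required equalizer. Given all this, the verification that $\UU$ reflects such equalizers and that the lifted cone is universal in $\VV_A^C$ is routine, and I would present it compactly rather than spelling out every diagram chase.
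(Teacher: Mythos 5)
Your argument is correct and is the standard one: $\VV_A^C$ is the Eilenberg--Moore category of coalgebras for the comonad $-\tensor_A C$ on $\VV_A$, and the forgetful functor from such a category creates exactly those equalizers that the comonad preserves, which is what flatness of $(A,C)$ supplies for coreflexive pairs (noting, as you do, that coreflexivity is inherited by the tensored pairs via the tensored common retraction, so the two applications of $-\tensor_A C$ needed for the coaction and for coassociativity are legitimate). The present paper only cites this statement from \cite{berglund-hess:morita} without reproducing the proof, but the argument there is this same dual-of-monadicity construction, so there is nothing to flag.
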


The following definition is dual to Definition \ref{defn:tensor-over}.

\begin{definition}\label{defn:cotensor-over}
Suppose that the monoidal category $\VV$ admits coreflexive equalizers. Let $(A,C)$ be a coring in $\VV$, let $M$ be a right and $N$ a left $(A,C)$-comodule. The \emph{cotensor product} $M\cotensor_C N$ is defined as the coreflexive equalizer in $\VV$: 
$$
\equalizer{M\cotensor_C N}{M\tensor_A N}{M\tensor_A C \tensor_A N.}{\;\delta_M\tensor N}{\negthinspace\negthinspace M\tensor \delta_N}
$$
\end{definition}

We can now formulate the desired explicit description of the right adjoint in the adjunction governed by a morphism of corings.

\begin{proposition} \label{prop:R_X dualizable}\cite[Proposition 3.30]{berglund-hess:morita}
Let $\VV$ be a  monoidal category admitting all reflexive coequalizers and coreflexive equalizers. Let $(A,C)$ be a flat coring in $\VV$.
If $(\vp,f)\colon (A,C)\rightarrow (B,D)$ is a coring morphism, then $B\tensor_A C$ admits the structure of a left $(B,D)$-comodule in $\VV_A^C$ such that the functor $(\vp,f)^{*}$ is isomorphic to the cotensor product functor $-\cotensor_{D}(B \tensor_A C)$, i.e., there is an adjunction
$$
\hugeadjunction{\VV_A^C}{\VV_B^D}{(\vp,f)_{*}}{-\cotensor_{D} (B\tensor_A C)}.
$$
\end{proposition}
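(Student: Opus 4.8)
The plan is to produce the left $(B,D)$-comodule structure on $B\tensor_A C$ directly, verify that the cotensor functor $-\cotensor_D(B\tensor_A C)$ lands in $\VV_A^C$, and then exhibit the adjunction isomorphism by a unit–counit calculation, reducing at each stage to known facts via the factorization of Remark \ref{rmk:factor2}.

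\medskip

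First I would recall that $B\tensor_A C=\vp_*(A)\tensor_A C$ carries an evident right $A$-action (via the $A$-module structure of $C$) and a right $C$-coaction $B\tensor_A\Delta\colon B\tensor_A C\to B\tensor_A C\tensor_A C$, so that it is an object of $\VV_A^C$; this is just the cofree $C$-comodule on the $A$-module $B$. For the left $(B,D)$-comodule structure internal to $\VV_A^C$, I would first treat the canonical case $(\vp,\Id)\colon(A,C)\to(B,\vp_*(C))$: here $B\tensor_A C$ acquires a left $B$-action from the left factor $B$ and a left $\vp_*(C)=B\tensor_A C\tensor_A B$-coaction; one checks these are compatible with the ambient right $(A,C)$-structure, using that $\vp$ is an algebra map. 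Then, for a general change of corings $(\Id_B,f)\colon(B,\vp_*(C))\to(B,D)$, the left $D$-coaction on $B\tensor_A C$ is obtained by pushing the $\vp_*(C)$-coaction forward along $f$, exactly as in the formula displayed after Proposition \ref{prop:bbadj} for $(\vp,f)_*$; composing via \eqref{eq:braided factorization} assembles the left $(B,D)$-comodule structure in the general case. The coassociativity and counit axioms follow from those of $f$ as a coring morphism together with the coring axioms of $C$.

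\medskip

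Next I would construct the adjunction. By flatness of $(A,C)$ the cotensor product $-\cotensor_D(B\tensor_A C)$ exists, as a coreflexive equalizer computed in $\VV_A^C$ (the right $(A,C)$-structure of $B\tensor_A C$ being preserved by the two parallel arrows). I would define the counit $(\vp,f)_*\,\bigl(N\cotensor_D(B\tensor_A C)\bigr)\to N$ as the map induced on the equalizer by the $D$-coaction together with the counit $\epsilon_D$, and the unit $M\to\bigl((\vp,f)_*M\bigr)\cotensor_D(B\tensor_A C)$ as the map determined by the $C$-coaction of $M$ and the bijection $(\vp,f)_*M=M\tensor_A B$. Checking the triangle identities is then a diagram chase using coassociativity and counitality. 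A cleaner route, which I would prefer to present, is to invoke the factorization \eqref{eq:braided factorization}: it suffices to identify $\Prim_\vp\cong-\cotensor_{\vp_*(C)}(B\tensor_A C)$ and $f^*\cong-\cotensor_D(B\tensor_A C)$ (with $B\tensor_A C$ regarded over $\vp_*(C)$ in the first, over $D$ in the second), since composing cotensor functors along a coring morphism corresponds to a single cotensor over the target. The first identification is the flat-coring form of the descent-coring cotensor description (Remark \ref{rmk:descentcoring}); the second is the standard fact that $f^*$ on comodules over corings with the same base algebra is cotensor with $B\tensor_A C$ viewed as a $\vp_*(C)$-$D$-bicomodule.

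\medskip

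The main obstacle I expect is keeping the bookkeeping of the \emph{three} simultaneous (co)module structures on $B\tensor_A C$ straight: an ambient right $(A,C)$-comodule structure, a left $B$-module structure, and a left $D$-coaction, all required to be mutually compatible so that $B\tensor_A C$ is genuinely a left $(B,D)$-comodule \emph{object in} $\VV_A^C$. Verifying that the left $D$-coaction is a morphism of right $(A,C)$-comodules — i.e.\ that the coaction obtained by pushing forward along $f$ does not interfere with the right $C$-coaction $B\tensor_A\Delta$ — is the delicate point, and it is exactly where coassociativity of $\Delta$ and the fact that $f$ is a $B$-coring map are used. Once that compatibility is in place, that the resulting cotensor functor agrees with $(\vp,f)^*$ follows formally, because both are right adjoint to $(\vp,f)_*=\vp_*$ followed by pushforward, and right adjoints are unique up to canonical isomorphism; flatness guarantees the cotensor actually computes this right adjoint rather than merely a candidate.
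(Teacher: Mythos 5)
First, a point of comparison: this paper does not actually prove Proposition \ref{prop:R_X dualizable} --- it is recalled verbatim from \cite[Proposition 3.30]{berglund-hess:morita}, so there is no in-paper argument to measure you against. Judged on its own, your proposal contains a sound proof (the direct route) wrapped around a preferred shortcut that has two real problems.

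The direct route you sketch in the second paragraph --- equip $B\tensor_A C$ with the left $D$-coaction displayed in the remark following the proposition, define the unit from the $C$-coaction of $M$ and the counit from $\epsilon_C$ together with the right $B$-action on $N$, and check the triangle identities --- is the standard argument and is essentially complete; your diagnosis in the last paragraph of where the compatibility checks actually bite (the left $D$-coaction must be a morphism of right $(A,C)$-comodules) is also the right one. The problems are in the ``cleaner route'' you say you would prefer to present. First, circularity: the identifications $\Prim_\vp\cong-\cotensor_{\vp_*(C)}(B\tensor_A C)$ and $f^*\cong-\cotensor_D(\cdot)$ are precisely the special cases $(\vp,\Id_{\vp_*(C)})$ and $(\Id_B,f)$ of the proposition you are proving, so invoking them as ``standard facts'' begs the question unless you prove those two cases independently first (which is doable and easier, but must be done). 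Second, the step ``composing cotensor functors along a coring morphism corresponds to a single cotensor over the target'' is an associativity statement $\left(N\cotensor_D X\right)\cotensor_{E} Y\cong N\cotensor_D\left(X\cotensor_{E} Y\right)$ for cotensor products, and this is \emph{not} automatic: cotensor products are equalizers, and rebracketing them requires that the relevant tensor functors preserve the defining coreflexive equalizers. That is exactly what the flatness hypothesis on $(A,C)$ is for, and it needs to be cited at that step rather than only in the closing sentence. If you instead lead with the unit--counit construction and use uniqueness of right adjoints to identify the result with $(\vp,f)^{*}$, both issues disappear.
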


\begin{remark}  The left $D$-coaction on $B\otimes_{A}C$ is given by the following composite.
{\small$$\xymatrix{B\otimes _{A}C \ar [r]^(0.25){B\otimes \Delta}&B\otimes_{A}C\otimes _{A}C\cong B\otimes _{A}C\otimes_{A}A\otimes_{A}C\ar [rr]^(0.6){B\otimes C\otimes \vp \otimes C}&&B\otimes _{A}C\otimes_{A}B\otimes_{A}C \ar[d]^{f\otimes C}\\
&&& D\otimes _{A}C\ar [d]^{\cong}\\
&&& D\otimes_{B}B\otimes_{A}C}$$}
\end{remark}

\subsubsection{Homotopy theory of comodules}
We now introduce homotopy theory into our discussion of comodule categories.  

\begin{convention} \label{conv:m-t} We assume henceforth that $\VV$ is a symmetric monoidal model category satisfying Hypothesis \ref{conv:module}.
For every coring $(A,C)$  in $\VV$ that we consider here, we suppose moreover   that $\VV_A^C$  admits the model category structure left-induced from $\VV_{A}$, via the adjunction
$$\bigadjunction{\VV_A^C}{\VV_A}{\UU_A}{-\tensor_A C}.$$
\end{convention}

\begin{remark}
Conditions on $\VV$ under which the convention above holds can be found in \cite{bhkkrs}, \cite{hkrs}, and \cite{hess-shipley}, where a number of concrete examples, including simplicial sets, symmetric spectra, and chain complexes, are also treated. In Section \ref{sec:ch cx} we recall in detail the example of unbounded chain complexes over a commutative ring.
\end{remark}

\begin{remark}\label{rmk:QA-coring map} It follows from \cite[Proposition 4.5]{berglund-hess:morita} that if  $\VV$ is a symmetric monoidal model category satisfying Hypothesis \ref{conv:m-t}, and  $(\vp, f):(A,C) \to (B,D)$ is a morphism of corings, then the associated adjunction
$$\bigadjunction{\VV_A^C}{\VV_B^D}{(\vp, f)_{*}}{(\vp, f)^{*}}$$
is a Quillen adjunction if
$$\bigadjunction{\VV_A}{\VV_B}{\vp_{*}}{\vp^{*}}$$
is, i.e., if $\vp^{*}$ preserves fibrations.  In particular, for every morphism of corings $(1,f): (A, C)\to (A,D)$, 
$$\bigadjunction{\VV_A^C}{\VV_A^D}{ f_{*}}{f^{*}}$$
is a Quillen adjunction.
\end{remark}

\begin{remark}
Since we assume henceforth that $\VV$, $\VV_{A}$, and $\VV_A^C$ are model categories, they are in particular complete and cocomplete and thus admit all reflexive coequalizers and coreflexive equalizers.
\end{remark}

We now recall from \cite[Section 4]{berglund-hess:morita} the conditions under which a morphism of corings $(\vp, f)\colon   (A,C)\to (B,D)$ induces a Quillen equivalence of the associated comodule categories. We begin by breaking the problem into two pieces, according to the factorization in Remark \ref{rmk:factor2}.

\begin{definition} \label{defn:homotopic descent}
Let $(A,C)$ be a coring in $\VV$ and $B$ an algebra in $\VV$. An algebra morphism $\vp\colon A \to B$ \emph{satisfies effective homotopic descent with respect to $C$} if the adjunction
\begin{equation}\label{eqn:canonical-adj}
\bigadjunction{\VV_A^C}{\VV_B^{\vp_*(C)},}{\Can_{\vp}}{\Prim_{\vp}}
\end{equation}
is a Quillen equivalence.
\end{definition}

Sufficient conditions for effective homotopic descent were established in \cite{berglund-hess:morita}.

\begin{proposition}\label{prop:descent}\cite[Corollary 4.12]{berglund-hess:morita} Let $\VV$ be a symmetric monoidal model category satisfying Hypothesis \ref{conv:m-t}. 
Let $\varphi\colon A\rightarrow B$ be a morphism of algebras in $\VV$. If $B$ is homotopy faithfully flat as a left $A$-module, then $\varphi$ satisfies effective homotopic descent.
\end{proposition}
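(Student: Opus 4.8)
The plan is to verify that \eqref{eqn:canonical-adj} satisfies the standard criterion for a Quillen adjunction $L\dashv R$ to be a Quillen equivalence: that $L$ reflects weak equivalences between cofibrant objects, and that for every fibrant object $Y$ the derived counit---the composite $L(QRY)\to LRY\xrightarrow{\varepsilon_{Y}}Y$, with $QRY\xrightarrow{\sim}RY$ a cofibrant replacement---is a weak equivalence. That \eqref{eqn:canonical-adj} is a Quillen adjunction follows from Remark~\ref{rmk:QA-coring map}, since $\vp^{*}$ preserves fibrations (as it does whenever the module structures are right-induced from $\VV$). Throughout I would use two facts: the commuting square of left adjoints $\UU\circ\Can_{\vp}=\vp_{*}\circ\UU$ of Proposition~\ref{prop:bbadj}; and that, under Hypothesis~\ref{conv:m-t}, the forgetful functor $\UU\colon\VV_A^{C}\to\VV_A$ is left Quillen for the left-induced model structure, so that it preserves cofibrant objects and creates weak equivalences, and likewise over $B$. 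Recall, finally, that the hypothesis that $B$ is homotopy faithfully flat as a left $A$-module says precisely that $\vp_{*}=-\otimes_A B\colon\VV_A\to\VV_B$ preserves weak equivalences, reflects weak equivalences, and commutes up to weak equivalence with all finite limits.

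The reflection property is the easy half. Let $f$ be a morphism of cofibrant $(A,C)$-comodules with $\Can_{\vp}f$ a weak equivalence. Then $\vp_{*}(\UU f)=\UU(\Can_{\vp}f)$ is a weak equivalence; since $B$ is homotopy faithful over $A$, the functor $-\otimes_A B$ reflects weak equivalences, so $\UU f$, hence $f$, is a weak equivalence. Only homotopy faithfulness is used here.

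The substantive half is the derived counit. Fix a fibrant $Y$ in $\VV_B^{\vp_{*}(C)}$ and a cofibrant replacement $q\colon Q\Prim_{\vp}Y\xrightarrow{\sim}\Prim_{\vp}Y$ in $\VV_A^{C}$. Applying $\UU$ (which creates weak equivalences) and using $\UU\Can_{\vp}=\vp_{*}\UU$, it suffices to show that the composite
$$(\UU Q\Prim_{\vp}Y)\otimes_A B \xrightarrow{\UU(q)\otimes_A B} (\UU\Prim_{\vp}Y)\otimes_A B \longrightarrow \UU Y$$
is a weak equivalence in $\VV$. Since $\UU(q)$ is a weak equivalence and $B$ is homotopy flat over $A$, the first map is a weak equivalence, which reduces us to the second. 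By Proposition~\ref{prop:R_X dualizable} (applied with $(A,C)$ flat, which holds in the settings of interest), $\Prim_{\vp}Y\cong Y\cotensor_{\vp_{*}(C)}(B\otimes_A C)$, and so $\UU\Prim_{\vp}Y$ is the coreflexive equalizer in $\VV_A$ of a parallel pair $P_{0}\rightrightarrows P_{1}$ in which $P_{0}$ and $P_{1}$ are built from $Y$ by restriction along $\vp$ and tensoring over $A$ with copies of $C$. Since this is a finite limit and $B$ is strongly homotopy flat over $A$, the canonical comparison
$$(\UU\Prim_{\vp}Y)\otimes_A B \longrightarrow \operatorname{eq}\big(P_{0}\otimes_A B\rightrightarrows P_{1}\otimes_A B\big)$$
is a weak equivalence. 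Unwinding $\vp_{*}(C)=B\otimes_A C\otimes_A B$ identifies $P_{0}\otimes_A B$ and $P_{1}\otimes_A B$ with $Y\otimes_B\vp_{*}(C)$ and $Y\otimes_B\vp_{*}(C)\otimes_B\vp_{*}(C)$, and the parallel pair with the canonical cobar pair of the $\vp_{*}(C)$-comodule $Y$; the coaction on $Y$, together with the counit and coassociativity of the coring $\vp_{*}(C)$, exhibits $Y$ as the \emph{split} equalizer of that pair, and one checks that the resulting isomorphism onto $\UU Y$ is exactly the displayed composite. Hence the derived counit is a weak equivalence, and by the criterion the adjunction \eqref{eqn:canonical-adj} is a Quillen equivalence.

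I expect the main obstacle to be this last identification: writing down the precise parallel pair $P_{0}\rightrightarrows P_{1}$ computing $\UU\Prim_{\vp}Y$ with all bimodule and comodule structures correct, and checking that base change along $\vp$ carries the coreflexive fork defining $\Prim_{\vp}Y$ to the canonical split equalizer presenting $Y$, compatibly with the counit $\varepsilon_{Y}$---this is the homotopical avatar of the effectiveness of faithfully flat descent. The rest is formal, and the three flatness hypotheses on $B$ enter exactly as above: homotopy faithful for the reflection of weak equivalences, homotopy flat to absorb the cofibrant replacement $q$, and strongly homotopy flat to commute base change past the equalizer defining $\Prim_{\vp}Y$.
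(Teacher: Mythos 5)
The paper offers no proof of this proposition---it is imported verbatim from \cite[Corollary 4.12]{berglund-hess:morita}---so there is nothing in the text to compare against; your argument is, however, a correct reconstruction along the expected lines, with the reflection of weak equivalences coming from homotopy faithfulness and the derived counit handled by commuting $-\otimes_{A}B$ past the defining equalizer of $\Prim_{\vp}$ and recognizing the result as the split cobar equalizer of $Y$. The one point you should make explicit rather than relegate to a parenthetical: the identification $\Prim_{\vp}Y\cong Y\cotensor_{\vp_{*}(C)}(B\otimes_{A}C)$ and the claim that $\UU\Prim_{\vp}Y$ is the corresponding coreflexive equalizer computed in $\VV_{A}$ both require the coring $(A,C)$ to be flat (Proposition~\ref{prop:R_X dualizable}, together with the fact that for flat corings the forgetful functor creates coreflexive equalizers), which is not among the stated hypotheses. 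Since ``satisfies effective homotopic descent'' here means descent with respect to the trivial coring $C=A$, i.e.\ the adjunction between $\VV_{A}$ and $\VV_{B}^{\Desc(\vp)}$ (cf.\ the notation preceding Remark~\ref{rmk:factor2} and the later uses in Theorem~\ref{thm:homotopic descent for chains} and Proposition~\ref{prop:hyp2}), flatness is automatic and your argument goes through; for a general $C$ it would be a genuine additional assumption.
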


For the other piece of the factorization, we need to introduce a notion dual to that of pure weak equivalence.

\begin{definition} \label{defn:copure}
Let $\VV$ be a symmetric monoidal model category satisfying Hypothesis \ref{conv:m-t}. We say that a map $f\colon C\to D$ of $A$-corings is a \emph{copure weak equivalence} if
$$\epsilon_{M}\colon  f_{*}f^{*}(M) \to M$$ 
is a weak equivalence for all fibrant right $D$-comodules $M$.
\end{definition}

Just as pure weak equivalences induce Quillen equivalences of module categories, copure weak equivalences do the same for comodule categories.

\begin{proposition}\label{prop:we-QE}\cite[Proposition 4.5]{berglund-hess:morita}  Let $\VV$ be a symmetric monoidal model category satisfying Convention \ref{conv:m-t}. Let $A$ be an algebra in $\VV$.
The change-of-corings adjunction,
$$
\bigadjunction{\VV_A^C}{\VV_A^D,}{f_*}{f^{*}}
$$
is a Quillen equivalence if and only if $f\colon C\to D$ is a copure weak equivalence of $A$-corings.
\end{proposition}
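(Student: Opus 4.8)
The plan is to invoke the standard characterization of Quillen equivalences: a Quillen pair $L\dashv R$ is a Quillen equivalence if and only if (a) $L$ reflects weak equivalences between cofibrant objects, and (b) for every fibrant object $Y$ of the target the derived counit $L\big((RY)^{c}\big)\to LRY\xrightarrow{\,\epsilon_{Y}\,}Y$ is a weak equivalence, where $(RY)^{c}\xrightarrow{\sim}RY$ is a cofibrant replacement. Here we take $L=f_{*}$ and $R=f^{*}$; by Remark \ref{rmk:QA-coring map} this is a Quillen pair (it is the case $(\vp,f)=(\Id_{A},f)$ of that remark), and by Convention \ref{conv:m-t} the categories $\VV_{A}^{C}$ and $\VV_{A}^{D}$ carry the model structures left-induced from $\VV_{A}$, so that a morphism of comodules is a weak equivalence precisely when its underlying morphism of $A$-modules is.

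The one structural input we need is the explicit formula for $f_{*}$ recorded in the notation introducing the change-of-corings adjunction \eqref{eq:corestriction}: $f_{*}(M,\delta)=\big(M,(1\otimes f)\delta\big)$. In particular the forgetful functor $\UU$ commutes strictly with $f_{*}$, this being the case $\vp=\Id_{A}$ of the commuting square of left adjoints in Proposition \ref{prop:bbadj}. Combined with the previous paragraph, this shows that $f_{*}$ both preserves and reflects \emph{all} weak equivalences, not merely those between cofibrant objects; in particular condition (a) holds automatically.

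It remains to analyse condition (b). Fix a fibrant $D$-comodule $Y$ and a cofibrant replacement $q\colon (f^{*}Y)^{c}\xrightarrow{\sim}f^{*}Y$ in $\VV_{A}^{C}$. Since $f_{*}$ preserves weak equivalences, $f_{*}(q)$ is a weak equivalence, so the derived counit $f_{*}\big((f^{*}Y)^{c}\big)\to f_{*}f^{*}Y\xrightarrow{\epsilon_{Y}}Y$ is a weak equivalence if and only if the strict counit $\epsilon_{Y}\colon f_{*}f^{*}Y\to Y$ is. Hence $f_{*}\dashv f^{*}$ is a Quillen equivalence if and only if $\epsilon_{Y}$ is a weak equivalence for every fibrant $D$-comodule $Y$ — which is exactly the assertion that $f$ is a copure weak equivalence of $A$-corings (Definition \ref{defn:copure}). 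Both implications of the proposition follow at once.

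There is no serious obstacle here: the argument is a formal consequence of the fact that $f_{*}$ is the identity on underlying $A$-modules. The points that merit a little care are the verification that $f_{*}$ is homotopical on the nose — which is where the left-induced model structures of Convention \ref{conv:m-t} enter — and the observation that, $f_{*}$ being homotopical, the derived counit may be computed from any cofibrant replacement of $f^{*}Y$ and is a weak equivalence exactly when the strict counit at $Y$ is. Both are immediate.
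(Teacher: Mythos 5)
Your proof is correct and is essentially the standard argument (the paper itself only cites \cite[Proposition 4.5]{berglund-hess:morita}, where the proof runs along exactly these lines): since $f_{*}$ is the identity on underlying $A$-modules and both comodule categories carry weak equivalences created in $\VV_{A}$, the left adjoint preserves and reflects all weak equivalences, so the Quillen adjunction is a Quillen equivalence precisely when the strict counit at fibrant objects is a weak equivalence, which is the definition of copurity. No gaps.
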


\begin{remark}
As pointed out in \cite[Proposition 4.6]{berglund-hess:morita}, if $A$ is fibrant as an object of $\VV$, then every copure weak equivalence of $A$-corings is a weak equivalence. Conversely, if the coring $(A,C)$ is flat, then $f^{*}(M)=M\cotensor_{D}C$ by Proposition \ref{prop:R_X dualizable}. In this case, if $f$ is a weak equivalence, and the functor $M\cotensor_{D}-\colon {}_{A}^{D}\VV \to \VV$ preserves weak equivalences for all fibrant right $D$-comodules $M$, then the adjunction above is a Quillen equivalence. It follows that if every fibrant $D$-module is ``homotopy coflat'', then every  weak equivalence of corings with flat domain is copure; compare with Proposition \ref{prop:pure we alt}. 
\end{remark}

As a consequence of Propositions \ref{prop:descent} and \ref{prop:we-QE}, we obtain the following sufficient condition for the adjunction induced by a coring morphism to be a Quillen equivalence.

\begin{corollary}\label{cor:QE conditions} Let $\VV$ be a symmetric monoidal model category satisfying Convention \ref{conv:m-t}.  Let $(\vp, f)\colon (A,C) \to (B,D)$ be a morphism of corings in $\VV$.

If $B$ is homotopy faithfully flat as a left $A$-module, and $f$ is a copure weak equivalence, then 
$$\bigadjunction{\VV_A^C}{\VV_B^D}{(\vp,f)_{*}}{(\vp, f)^{*}}$$
is a Quillen equivalence.
\end{corollary}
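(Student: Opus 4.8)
The plan is to deduce the statement formally from the two results it cites, using the factorization of the adjunction induced by a coring morphism. First I would invoke Remark~\ref{rmk:factor2}, and more precisely the factorization~\eqref{eq:braided factorization}, which exhibits $(\vp,f)_{*}\dashv(\vp,f)^{*}$ between $\VV_A^C$ and $\VV_B^D$ as the composite of the canonical adjunction $\Can_\vp\dashv\Prim_\vp$, running from $\VV_A^C$ to $\VV_B^{\vp_*(C)}$, followed by the change-of-corings adjunction $f_{*}\dashv f^{*}$, running from $\VV_B^{\vp_*(C)}$ to $\VV_B^D$, where $f\colon \vp_{*}(C)\to D$ is the morphism of $B$-corings underlying $(\vp,f)$. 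By Remark~\ref{rmk:QA-coring map} all three adjunctions are Quillen adjunctions, so it will suffice to prove that the two factors are Quillen equivalences and then appeal to the fact that a composite of Quillen equivalences is a Quillen equivalence.

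For the first factor I would use the hypothesis that $B$ is homotopy faithfully flat as a left $A$-module: this is exactly the input required by Proposition~\ref{prop:descent}, which tells us that $\vp\colon A\to B$ satisfies effective homotopic descent with respect to $C$, i.e., by Definition~\ref{defn:homotopic descent}, that $\Can_\vp\dashv\Prim_\vp$ is a Quillen equivalence. For the second factor I would use the hypothesis that $f$ is a copure weak equivalence of $B$-corings (Definition~\ref{defn:copure}), which by Proposition~\ref{prop:we-QE} is equivalent to the assertion that $f_{*}\dashv f^{*}$ between $\VV_B^{\vp_*(C)}$ and $\VV_B^D$ is a Quillen equivalence. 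Composing the two, $(\vp,f)_{*}\dashv(\vp,f)^{*}$ is a Quillen equivalence, which is the desired conclusion.

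There is no real obstacle here: the corollary is a formal consequence of Propositions~\ref{prop:descent} and~\ref{prop:we-QE} together with the factorization~\eqref{eq:braided factorization}. The only points that need a little care are bookkeeping ones --- making sure the hypothesis on $f$ is read as a condition on the $B$-coring morphism $\vp_{*}(C)\to D$ rather than something at the level of $A$, and checking that the model structures on $\VV_A^C$, $\VV_B^{\vp_*(C)}$, and $\VV_B^D$ (all left-induced from the respective module categories, per Hypothesis~\ref{conv:m-t}) are genuinely in place, so that the composability of Quillen equivalences may be applied.
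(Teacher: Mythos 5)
Your proposal is correct and follows exactly the route the paper intends: the corollary is stated as an immediate consequence of Propositions \ref{prop:descent} and \ref{prop:we-QE} via the factorization of Remark \ref{rmk:factor2}, which is precisely your argument. The bookkeeping points you flag (reading the copurity hypothesis at the level of the $B$-coring morphism $\vp_{*}(C)\to D$, and the left-induced model structures guaranteed by Convention \ref{conv:m-t}) are the right ones to check.
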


\section{Relative Hopf-Galois extensions}\label{sec:rel H-G}

Here we apply the results of \cite{berglund-hess:morita}   recalled in the previous section to elaborating interesting and natural generalizations first of the classical framework, then of the homotopic  framework, for Hopf-Galois extensions.    

\subsection{The descent and Hopf functors}
Let $(\VV, \otimes, R )$ be a symmetric monoidal category that is both complete and cocomplete. 
Generalizing somewhat constructions in \cite{brzezinski-wisbauer}, we begin by describing two important, natural ways to create corings in $\VV$  and  the relation between these constructions. 

\begin{definition}  Let $\Alg_{\VV}^{\to}$ denote the category of morphisms of algebras in $\VV$.  The \emph{descent functor} $$\Desc \colon  \Alg_{\VV}^{\to} \to \Coring_{\VV}$$ 
sends an object $\vp\colon  A\to B$ to its associated \emph{canonical descent coring} (also called the \emph{Sweedler coring})
$$\Desc(\vp)=\big(B, (B \otimes _{A}B, \Delta _{\vp}, \ve_{\vp})\big),$$
where $\Delta_{\vp}$ is equal to the composite
$$B\otimes _{A}B \cong B\otimes _{A}A \otimes_{A}B\xrightarrow {B\otimes_{A}\vp\otimes_{A}B}B\otimes _{A}B \otimes_{A}B\cong (B\otimes _{A}B)\otimes _{B}(B \otimes_{A}B),$$
and 
$$\ve_{\vp}=\bar\mu _{B}\colon   B\otimes_{A}B \to B,$$ 
the morphism induced by the multiplication $\mu_{B}\colon  B\otimes B \to B$.  A morphism $(\alpha,\beta)\colon  \vp \to \vp'$ in $\Alg^{\to}$, i.e.,
a commuting diagram of algebra morphisms
$$\xymatrix{A\ar [d]_{\vp}\ar [r]^{\alpha}&A'\ar [d]^{\vp'}\\ 
B\ar [r]^{\beta}&B',}$$
induces a morphism of $B$-corings
$$\Desc (\alpha,\beta)=(\beta, \beta\otimes _{\alpha}\beta)\colon   \Desc (\vp)\to \Desc(\vp').$$
\end{definition}

\begin{remark}  The coring $\Desc(\vp)$ is the same as the coring $\vp_{*}(A)$ of Remark \ref{rmk:descentcoring}.  We change the notation here to emphasize the functoriality of the construction in the morphism $\vp$.
\end{remark}

It is not hard to check that $\big(B, (B \otimes _{A}B, \Delta _{\vp}, \ve_{\vp})\big)$ is indeed a $B$-coring and that $(\beta, \beta\otimes _{\alpha}\beta)$ is a morphism of corings for any $(\alpha,\beta)\colon  \vp \to \vp'$. Moreover, $\Desc(\vp)$ admits two natural coaugmentations, given by the composites
$$B\cong B\otimes _{A}A \xrightarrow {B\otimes_{A}\vp} B\otimes _{A}B\quad \text{and}\quad B\cong A\otimes _{A}B \xrightarrow {\vp\otimes_{A}B} B\otimes _{A}B.$$

The other functor into $\Coring_{\VV}$ that we consider here takes as input algebras, respectively coalgebras, endowed with extra structure given by a bialgebra $H$.

\begin{remark} If $H$ is a bialgebra in $\VV$, then it is an algebra in $\Coalg_{\VV}$ and a coalgebra in $\Alg_{\VV}$.  In particular, $( R , H)$ is a coring in ${\VV}$.
\end{remark}

\begin{definition} Let $H$ be a bialgebra in $\VV$. An object of the category $(\Coalg_{\VV})_{H}$ of  \emph{$H$-module coalgebras} in $\VV$ is an $H$-module in $\Coalg_{\VV}$, i.e., a coalgebra $C$ in $\VV$, equipped with an associative, unital morphism of coalgebras $\kappa\colon   C\otimes H \to C$.  Morphisms in $(\Coalg_{\VV})_{H}$ are morphisms in $\VV$ that respect the comultiplication and counit and the $H$-action.
\end{definition}

\begin{definition}  Let $H$ be a bialgebra in $\VV$. An object of the category $\Alg_{\VV}^{H}$ of  \emph{$H$-comodule algebras} in $\VV$ is an $H$-comodule in $\Alg_{\VV}$, i.e., an algebra $A$ in $\VV$, equipped with a coassociative, counital morphism of algebras $\rho\colon   A \to A\otimes H$.  Morphisms in $\Alg_{\VV}^{H}$ are morphisms in $\VV$ that respect the multiplication and unit and the $H$-coaction.
\end{definition}

\begin{notation} Let $\Gamma\colon   H\to H'$ be a morphism of bialgebras. There is an induced extension/restriction-of-scalars adjunction
$$\bigadjunction{(\Coalg_{\VV})_{H}}{(\Coalg_{\VV})_{H'}.}{\Gamma_{*}}{\Gamma^{*}}$$
Moreover, by Proposition \ref{prop:bbadj}  there is also a change-of-corings adjunction
$$\bigadjunction{\Alg_{\VV}^{H}}{\Alg_{\VV}^{H'}.}{\Gamma_{*}}{\Gamma^{*}}$$
As we are using the same notation for the functors in these two different cases, we will be very careful to specify context any time we refer to a functor $\Gamma_{*}$ or $\Gamma^{*}$.
\end{notation}

The category below of matched pairs of comodule algebras and module coalgebras is the natural domain for an interesting functor to the global category $\Coring _{\VV}$ of all corings in $\VV$, generalizing the well known construction of a coring from any comodule algebra \cite [Example 4.3(2)]{hess:hhg}.

\begin{definition} The category $\Pair_{\VV}$ has as objects triples $\big(H, (A, \rho), (C, \kappa)\big)$, where $H$ is a bialgebra in $\VV$, $(A, \rho)$ is an $H$-comodule algebra, and $(C,\kappa)$ is an $H$-module coalgebra. A morphism  from  $\big(H, (A, \rho_{A}), (C,\kappa_{C})\big)$ to $\big(K, (B, \rho_{B}), (D, \kappa_{D})\big)$ consists of a triple $(\Gamma, \vp, \theta)$, where $\Gamma\colon   H\to K$ is a morphism of bialgebras, $\vp\colon \Gamma_{*}(A) \to B$ is a morphism of $K$-comodule algebras, and $\theta\colon   C\to \Gamma^{*}(D)$ is a morphism of $H$-module coalgebras.
\end{definition}

\begin{definition}\label{defn:hopf} Let $H$ be a bialgebra in $\VV$. The \emph{Hopf functor} 
$$\Hopf\colon   \Pair_{\VV}\to \Coring_{\VV}$$ 
sends an object $\big(H,(A, \rho), (C,\kappa)\big)$ to its associated \emph{Hopf coring},
$$\Hopf(\rho, \kappa)=\big( A, (A\otimes C, \Delta_{\rho,\kappa}, \ve_{\rho, \kappa})\big),$$
where the left $A$-action is equal to
$$A\otimes A\otimes C \xrightarrow {\mu\otimes C} A\otimes C,$$
where $\mu$ is the multiplication on $A$, and the right $A$-action is given by the composite 
$$A\otimes C \otimes A \xrightarrow{A\otimes C\otimes \rho} A\otimes C \otimes A\otimes H\cong A\otimes A \otimes C\otimes H\xrightarrow {\mu\otimes \kappa} A\otimes C.$$
The comultiplication $\Delta_{\rho,\kappa}$ is equal to the composite
$$A\otimes C\xrightarrow {A\otimes \Delta } A\otimes C\otimes C \cong (A\otimes C)\otimes _{A} (A\otimes C),$$
where $\Delta$ is the comultiplication on $C$, and $\ve_{\rho, \kappa}$ is given by
$$ A\otimes C \xrightarrow {A\otimes \ve} A\otimes R  \cong A,$$
where $\ve$ is the counit of $C$.  

If $(\Gamma, \vp, \theta)$ is a morphism from $\big(H, (A, \rho_{A}), (C,\kappa_{C})\big)$ to $\big(K, (B, \rho_{B}), (D, \kappa_{D})\big)$, then the morphism of $A$-bimodules underlying $\Hopf(\Gamma, \alpha, \theta)$ is
$$\vp\otimes \theta\colon   A\otimes C\to B\otimes D.$$ 
\end{definition}

The proof that $\Hopf(\rho, \kappa)$ is actually an $A$-coring is somewhat fastidious, but straightforward.

\begin{notation}  An important special case of the construction above comes from taking $(C,\kappa)=(H, \mu)$, where $\mu$ is the multiplication on $H$.  We simplify notation a bit and write
$$\Hopf(\rho)=\Hopf(\rho, \mu).$$
\end{notation}

The relation between the functors $\Desc$ and $\Hopf$ can be expressed in terms of a natural transformation, as explained below. Observe first that the proof of \cite[Proposition 4.3]{milnor-moore} can easily be generalized to an arbitrary monoidal category, implying that for any morphism $\Gamma\colon   H\to K$ of bialgebras, the coequalizer $R \otimes _{H} K$ inherits a coalgebra structure from $K$, with compatible right $K$-module structure, induced by the multiplication in $K$.

\begin{notation}  If $\Gamma\colon H\to K$ is a morphism of bialgebras in $\VV$, let $\Cof(\Gamma)$ denote the $K$-module coalgebra $R \otimes_{H}K$, let $\bar \mu_{K}$ denote its induced right $K$-action, and let $\pi_{\Gamma}\colon K\to \Cof(\Gamma)$ denote the quotient map.
\end{notation}

\begin{definition} 
The category $\Comodalg_{\VV}$ of all comodule algebras in $\VV$ has as objects pairs $\big(H, (A, \rho)\big)$, where $H$ is a bialgebra in $\VV$, and $(A, \rho)$ is an $H$-comodule algebra. A morphism in  $\Comodalg_{\VV}$ from  $\big(H, (A, \rho_{A})\big)$ to $\big(K, (B, \rho_{B})\big)$ consists of a pair $(\Gamma, \vp)$, where $\Gamma\colon H\to K$ is a morphism of bialgebras, and $\vp\colon \Gamma_{*}(A) \to B$ is a morphism of $K$-comodule algebras.
\end{definition}

\begin{definition} Let $\Comodalg_{\VV}^{\to}$ denote the category of morphisms in the category $\Comodalg_{\VV}$. Let 
$$U^{\to}\colon \Comodalg_{\VV}^{\to} \to \Alg_{\VV}^{\to}\colon \Big( \big(H,(A, \rho_{A})\big) \xrightarrow{(\Gamma, \vp)} \big(K,(B, \rho_{B})\big)\Big) \mapsto \big( A \xrightarrow{\vp} B)$$
be the obvious forgetful functor, and
\begin{align*}
C\colon  \Comodalg&_{\VV}^{\to}\to \Pair_{\VV}\colon   \\
&\Big( \big(H,(A, \rho_{A})\big) \xrightarrow{(\Gamma, \vp)} \big(K,(B, \rho_{B})\big)\Big) \mapsto \big(K,(B, \rho_{B}), (\Cof(\Gamma), \bar\mu_{K})\big)
\end{align*}
the ``cofiber'' functor.  

The \emph{Galois transformation} is the natural transformation 
$$\Gal\colon   \Desc \circ U^{\to} \to \Hopf \circ C$$ 
defined on an object $\big(H,(A, \rho_{A})\big) \xrightarrow{(\Gamma, \vp)} \big(K,(B, \rho_{B})\big)$ so that 
$$\Gal_{(\Gamma, \vp)}\colon   \Desc(\vp) \to \Hopf(\rho_{B}, \overline \mu_{K})$$ 
is the morphism of $B$-corings given by the identity on $B$ in the algebra component and by the composite
$$B\otimes_{A}B \xrightarrow{B\otimes _{A}\rho_{B}} B\otimes _{A} B\otimes K\xrightarrow{\bar\mu_{B}\otimes \pi_{\Gamma}} B\otimes \Cof(\Gamma)$$
in the coring component, where $\bar \mu_{B}$ is induced by the multiplication in $B$; compare with the Galois map of Definition \ref{defn:hg-classical}.
\end{definition}

The diagram below summarizes the definitions seen thus far in this section.

$$\xymatrix{\Alg_{\VV}^{\to}\ar@<0.5ex> [ddrr]^{\Desc}\\
\\
\Comodalg_{\VV}^{\to}\ar[uu]^{U^{\to}}\ar [dd]_{C}\ar@<1.5ex>[rr]\ar@<-1.5ex>[rr]&\Downarrow\scriptstyle{\Gal}&\Coring_{\VV}\\
\\
 \Pair_{\VV}\ar@<-0.5ex>[uurr]_{\Hopf}}$$
 
 \begin{remark}  An object in $\Comodalg_{\VV}^{\to}$ of the form 
 $$\big(R ,(A, \rho_{A})\big) \xrightarrow{(\eta, \vp)} \big(K,(B, \rho_{B})\big),$$
where $\eta\colon  R \to K$ is the unit  of $K,$  is \emph{Hopf-Galois data}, in the sense of \cite{hess:hhg}, since one can also view a morphism of this type as a morphism of $K$-comodule algebras, where the coaction of $K$ on $A$ is trivial.
 \end{remark}
 
\begin{remark}\label{rmk:Galois functoriality}  The naturality of all of the constructions seen thus far implies that a commuting diagram of comodule algebra morphisms
$$\xymatrix{(H,A) \ar[d]_{(\zeta, \alpha)}\ar[r]^{(\Gamma, \vp)}&(K,B)\ar[d]_{(\xi, \beta)}\\
(H',A') \ar[r]^{(\Gamma', \vp')}&(K',B')}$$
gives rise to a commuting diagram of functors
$$\xymatrix{\VV _{A}^{}\ar @<0.5ex>[rr]^{\alpha_{*}}\ar @<-0.5ex>[dd]_{\Can_{\vp}}&&\VV _{A'}^{}\ar @<0.5ex>[ll]^{\alpha^{*}}\ar @<-0.5ex>[dd]_{\Can_{\vp'}}\\
\\
\VV _{B}^{\Desc(\vp)}\ar @<-0.5ex>[uu]_{\Prim_{\vp}}\ar@<0.5ex>[rr]^{\big(\beta,\Desc (\alpha, \beta)\big)_{*}}\ar @<-0.5ex>[dd]_{\Gal(\Gamma, \vp)_{*}}&&\VV _{B'}^{\Desc(\vp')}\ar @<0.5ex>[ll]^{\big(\beta,\Desc (\alpha, \beta)\big)^{*}}\ar @<-0.5ex>[dd]_{\Gal(\Gamma', \vp')_{*}}\ar @<-0.5ex>[uu]_{\Prim_{\vp'}}\\
\\
\VV _{B}^{\Hopf(\rho_{B},\bar\mu_{K})}\ar @<-0.5ex>[uu]_{\Gal(\Gamma, \vp)^{*}}\ar @<0.5ex>[rr]^{(\beta, \theta_{\xi, \beta})_{*}}&&\VV _{B'}^{\Hopf(\rho_{B'},\bar\mu_{K'})},\ar @<0.5ex>[ll]^{(\beta, \theta_{\xi, \beta})^{*}}\ar @<-0.5ex>[uu]_{\Gal(\Gamma', \vp')^{*}}
}$$
where the $B'$-bimodule map underlying $\theta_{\xi, \beta}$ is
$$B'\otimes \Cof (\Gamma)\otimes _{B}B'\xrightarrow{B'\otimes \Cof (\xi)\otimes B'} B'\otimes \Cof (\Gamma')\otimes _{B}B'\to B'\otimes \Cof (\Gamma'),$$
with the second map given by the right $B'$-action on $B'\otimes \Cof(\Gamma')$ (cf.~Definition \ref{defn:hopf}).
\end{remark}

We need to introduce one more functor defined on $\Comodalg_{\VV}$, in order to set the stage for Hopf-Galois extensions and their generalizations.  

\begin{remark}  Proposition \ref{prop:bbadj} implies that if $\Alg_{\VV}$ admits all reflexive coequalizers and coreflexive equalizers and $\Alg_{\VV}^{H}$ admits all coreflexive equalizers, then every morphism of bialgebras $\Gamma\colon   H\to K$ gives rise to an adjunction
$$\bigadjunction{\Alg_{\VV}^H}{\Alg_{\VV}^K.}{\Gamma_{*}}{\Gamma^{*}}$$
See Remark \ref{remark:coref-eq} for conditions under which these hypotheses hold.  In particular, if $\VV$ is locally presentable, then both $\Alg_{\VV}$ and $\Alg_{\VV}^{H}$ are locally presentable and therefore complete and cocomplete. 
\end{remark}

\begin{definition} Let $H$ be a bialgebra in $\VV$ with unit $\eta\colon   R\to H$.  If the extension-of-corings  functor $\eta_{*}\colon  \Alg_{\VV}\to \Alg_{\VV}^{H}$, which endows any algebra with a trivial $H$-coaction, admits a right adjoint,  then we call this right adjoint the \emph{$H$-coinvariants functor} and denote it
$$(-)^{{\mathrm{co} H}}\colon   \Alg_{\VV}^{H}\to \Alg_{\VV}.$$
\end{definition}

\begin{remark}\label{remark:coinvariants}  Suppose that $\Alg_{\VV}$ admits all reflexive coequalizers and coreflexive equalizers and $\Alg_{\VV}^{H}$ admits all coreflexive equalizers. For any morphism $\Gamma\colon   H\to K$ of bialgebras, there is a commuting diagram of adjunctions, with right adjoints on the inner triangle and left adjoints on the outer triangle, 
\begin{equation}\label{eqn:diag-triangle}\xymatrix{\Alg_{\VV}^{H}\ar@<0.7ex>[rrrr]^{\Gamma_{*}}\ar@<0.5ex> [drr]^{(-)^{\mathrm{co} H}}&&&&\Alg_{\VV}^{K}\ar@<0.5ex> [llll]^{\Gamma^{*}}\ar@<-0.5ex> [dll]_{(-)^{\mathrm{co} K}}\\
&&\Alg_{\VV}\ar @<0.7ex>[ull]^{(\eta_{H})_{*}}\ar @<-0.7ex>[urr]_{(\eta_{K})_{*}}}
\end{equation}
since $\Gamma \circ \eta_{H}=\eta_{K}$.  

Let  $(\Gamma, \vp)\colon  \big(H,(A, \rho_{A})\big) \xrightarrow{} \big(K,(B, \rho_{B})\big)$ be a morphism in
in $\Comodalg_{\VV}$. Recall that if $\eta^{\Gamma}$ is the unit of the $\Gamma_{*}\dashv \Gamma^{*}$-adjunction in diagram (\ref{eqn:diag-triangle}), then the transpose of $\vp\colon   \Gamma_{*}A\to B$ is the composite
$$A \xrightarrow {\eta^{\Gamma}_{A}} \Gamma^{*}\Gamma_{*}A \xrightarrow {\Gamma^{*}\vp} \Gamma^{*}B.$$
Applying $(-)^{{\mathrm{co} H}}$, we obtain a morphism of algebras 
$$A^{\mathrm{co} H}\xrightarrow {(\eta^{\Gamma}_{A})^{\mathrm{co} H}} (\Gamma^{*}\Gamma_{*}A)^{\mathrm{co} H}\xrightarrow {(\Gamma^{*}\vp)^{\mathrm{co} H}} (\Gamma^{*}B)^{\mathrm{co} H}\cong B^{\mathrm{co} K},$$
where the last isomorphism follows from the commutativity of the diagram above.  
We denote this composite morphism 
$$\vp^{\mathrm{co} \Gamma}\colon   A^{\mathrm{co} H}\to B^{\mathrm{co} K},$$
which becomes simply $\vp^{\mathrm{co} H}$ when $\Gamma$ is the identity morphism on $H$. 
\end{remark}

As the constructions above are clearly natural in both the bialgebra and the algebra components of a comodule algebra, we can summarize the discussion above as follows.

\begin{proposition}  Let $\VV$ be a symmetric monoidal category such that $\Alg_{\VV}$ admits all reflexive coequalizers and coreflexive equalizers and $\Alg_{\VV}^{H}$ admits all coreflexive equalizers for all bialgebras $H$. There is a functor $\Coinv\colon    \Comodalg_{\VV}\to \Alg_{\VV}$  that to a morphism $(\Gamma, \vp)\colon  \big(H,(A, \rho_{A})\big) \xrightarrow{} \big(K,(B, \rho_{B})\big)$ in $\Comodalg_{\VV}$ associates the algebra morphism $\vp^{\mathrm{co} \Gamma}\colon   A^{\mathrm{co} H}\to B^{\mathrm{co} K}$.
\end{proposition}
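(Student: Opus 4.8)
The plan is to realise $\Coinv$ as a right adjoint, so that its functoriality becomes automatic and only the adjunction data has to be checked. The key preliminary observation is that the monoidal unit $R$, with its canonical bialgebra structure, is \emph{initial} in the category of bialgebras in $\VV$: a bialgebra morphism $R\to H$ must be the algebra unit $\eta_{H}\colon R\to H$, and $\eta_{H}$ is automatically a coalgebra morphism since $\Delta_{H}(1)=1\otimes 1$ and $\ve_{H}(1)=1$. Hence there is a functor $T\colon\Alg_{\VV}\to\Comodalg_{\VV}$ sending an algebra $A$ to $\big(R,(A,\text{trivial coaction})\big)$ and an algebra morphism $f$ to $(\Id_{R},f)$, and for every $(H,B)$ in $\Comodalg_{\VV}$ a morphism $T(A)\to(H,B)$ is exactly a pair $(\eta_{H},\vp)$ with $\vp\colon(\eta_{H})_{*}(A)\to B$ a morphism of $H$-comodule algebras, since $\eta_{H}$ is the only possible bialgebra component. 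I would then show that $\Coinv$ is right adjoint to $T$: functoriality of $\Coinv$ follows immediately, and its action on morphisms, read off from the counit, is precisely $(\Gamma,\vp)\mapsto\vp^{\mathrm{co}\Gamma}$.

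First I would record the hom-set bijection. Since $(-)^{\mathrm{co} H}$ is by definition the right adjoint of $(\eta_{H})_{*}$ (its existence being guaranteed by the standing hypotheses, exactly as in the preceding remarks), transposition yields a bijection $\Phi_{(H,B)}\colon\Hom_{\Comodalg_{\VV}}\big(T(A),(H,B)\big)\xrightarrow{\ \cong\ }\Hom_{\Alg_{\VV}}\big(A,B^{\mathrm{co} H}\big)$, natural in $A$, sending $(\eta_{H},\vp)$ to $\vp^{\mathrm{co} H}\circ\mu^{H}_{A}$, where $\mu^{H}$ is the unit of $(\eta_{H})_{*}\dashv(-)^{\mathrm{co} H}$. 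This is exactly the morphism written $\vp^{\mathrm{co}\Gamma}$ in Remark \ref{remark:coinvariants} when $\Gamma=\Id_{H}$, since there the comparison $(\Id_{H})_{*}\cong\Id$ and the unit $\eta^{\Id_{H}}$ reduce to identities.

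The substantial step is the naturality of $\Phi$ in the variable $(H,B)$. Given $(\Gamma,\psi)\colon(H,B)\to(K,B')$ in $\Comodalg_{\VV}$ and $(\eta_{H},\vp)\colon T(A)\to(H,B)$, their composite is $(\eta_{K},\psi\circ\Gamma_{*}\vp)$ — note that $\Gamma\circ\eta_{H}=\eta_{K}$ forces $\Gamma_{*}(\eta_{H})_{*}(A)=(\eta_{K})_{*}(A)$, so that no composite of two nontrivial bialgebra morphisms ever intervenes — whence $\Phi_{(K,B')}$ of this composite is $\psi^{\mathrm{co} K}\circ(\Gamma_{*}\vp)^{\mathrm{co} K}\circ\mu^{K}_{A}$. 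Expanding $\psi^{\mathrm{co}\Gamma}\circ\Phi_{(H,B)}(\eta_{H},\vp)$ by the definition of $\psi^{\mathrm{co}\Gamma}$, the required equality $\Phi_{(K,B')}\big((\Gamma,\psi)\circ(\eta_{H},\vp)\big)=\psi^{\mathrm{co}\Gamma}\circ\Phi_{(H,B)}(\eta_{H},\vp)$ reduces, after invoking the naturality of the unit $\eta^{\Gamma}$ of $\Gamma_{*}\dashv\Gamma^{*}$ and of the comparison isomorphism $\nu\colon(-)^{\mathrm{co} H}\circ\Gamma^{*}\cong(-)^{\mathrm{co} K}$ of Remark \ref{remark:coinvariants}, to the single identity $\nu_{(\eta_{K})_{*}A}\circ\big(\eta^{\Gamma}_{(\eta_{H})_{*}A}\big)^{\mathrm{co} H}\circ\mu^{H}_{A}=\mu^{K}_{A}$. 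Its left-hand side is the unit of the composite adjunction $\Gamma_{*}(\eta_{H})_{*}\dashv(-)^{\mathrm{co} H}\Gamma^{*}$ transported across $\nu$, and $\nu$ — being the comparison between two right adjoints of the single functor $(\eta_{K})_{*}=\Gamma_{*}(\eta_{H})_{*}$ — is by construction the unique natural isomorphism intertwining the two units, which is precisely that identity.

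Once naturality is in hand, $\Phi$ exhibits $\Coinv$ as right adjoint to $T$, so $\Coinv$ is a functor; its value on objects is $B^{\mathrm{co} H}$ and, by the previous step, its value on a morphism $(\Gamma,\vp)$ is $\vp^{\mathrm{co}\Gamma}$, whence the latter is functorial — sending identities to identities and composites to composites — as asserted. (Alternatively, one may bypass the adjunction and verify directly that $(\Gamma,\vp)\mapsto\vp^{\mathrm{co}\Gamma}$ respects identities and composition; the composition law is then exactly the equation above, but one must additionally use the pseudofunctoriality of $\Gamma\mapsto\Gamma^{*}$ and of $H\mapsto(-)^{\mathrm{co} H}$ to handle a composite of two nontrivial bialgebra morphisms, which the adjunction packaging avoids.) The main obstacle I anticipate is the bookkeeping of the naturality step, and in particular the displayed identity: this forces one to pin down exactly how the comparison isomorphism $\nu$ is constructed from $\Gamma\circ\eta_{H}=\eta_{K}$ and to use its compatibility with units — a standard consequence of the calculus of mates, but the one point where genuine care, rather than a bare citation, is needed.
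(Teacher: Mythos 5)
Your proposal is correct, and it packages the argument differently from the paper, which offers no formal proof beyond the construction of $\vp^{\mathrm{co}\Gamma}$ in Remark \ref{remark:coinvariants} and the assertion that the constructions involved are ``clearly natural.'' You instead characterize $\Coinv$ as the right adjoint of the trivial-coaction functor $T\colon \Alg_{\VV}\to\Comodalg_{\VV}$, $A\mapsto\big(R,(A,\text{trivial})\big)$, exploiting the observation that $R$ is initial among bialgebras, so that $\Hom_{\Comodalg_{\VV}}\big(T(A),(H,B)\big)\cong\Hom_{\Alg_{\VV}^{H}}\big((\eta_{H})_{*}A,B\big)\cong\Hom_{\Alg_{\VV}}\big(A,B^{\mathrm{co}H}\big)$. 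What this buys you is exactly what you say: functoriality comes for free from representability, and the only real computation is the naturality of the hom-bijection in $(H,B)$ with the explicit candidate $\vp^{\mathrm{co}\Gamma}$, which reduces --- via naturality of $\eta^{\Gamma}$ and of the comparison isomorphism $\nu\colon(-)^{\mathrm{co}H}\circ\Gamma^{*}\cong(-)^{\mathrm{co}K}$ --- to the standard identity expressing the unit of a composite adjunction in terms of the units of its factors, transported along the unique isomorphism between two right adjoints of $(\eta_{K})_{*}=\Gamma_{*}\circ(\eta_{H})_{*}$. The direct verification the paper has in mind must instead check the composition law for two arbitrary bialgebra morphisms $H\to K\to L$, which additionally requires the pseudofunctoriality of $\Gamma\mapsto\Gamma^{*}$ and of $H\mapsto(-)^{\mathrm{co}H}$; your route avoids that because one leg of every relevant composite is forced to be $\eta_{H}$. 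One small point worth making explicit in a final write-up: the equality $\Gamma_{*}(\eta_{H})_{*}(A)=(\eta_{K})_{*}(A)$ is genuinely an identity rather than a canonical isomorphism, since for corings over the monoidal unit the functor $\Gamma_{*}$ is corestriction of the coaction along $\Gamma$ and leaves the underlying algebra unchanged; this is what makes your description of composition out of $T(A)$ strictly correct and keeps the adjunction argument free of coherence bookkeeping.
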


\subsection{The classical Hopf-Galois framework}

We have now set up the complete framework enabling us to formulate a relative version of the classical notion of Hopf-Galois extensions of rings and algebras.  To simplify notation, we drop henceforth the coactions from the notation for comodule algebras.

\begin{definition} \label{def:relhgext} Let $\VV$ be a symmetric monoidal category such that $\Alg_{\VV}$ admits all reflexive coequalizers and coreflexive equalizers and $\Alg_{\VV}^{H}$ admits all coreflexive equalizers for all bialgebras $H$. 
A morphism  $(H,A) \xrightarrow{(\Gamma, \vp)}(K,B)$ in $\Comodalg_{\VV}$ is a \emph{relative Hopf-Galois extension} if 
$$\vp^{\mathrm{co} \Gamma}\colon   A^{\mathrm{co} H}\to B^{\mathrm{co} K}$$
is an isomorphism of algebras,
and
$$\Gal_{(\Gamma, \vp)}\colon   \Desc(\vp) \to \Hopf(\rho_{B},  \bar\mu_{K})$$
is an isomorphism of $B$-corings.
\end{definition}

When $\VV$ is the category of $R $-modules for some commutative ring $R $, a relative Hopf-Galois extension for $H=R $ is exactly a classical Hopf-Galois extension, as defined by Chase and Sweedler \cite{chase-sweedler}.  Related notions of relative Hopf-Galois extensions have been considered in \cite{schauenburg-schneider} and \cite{schneider}, in the context of quotient theory of noncommutative Hopf algebras.

\begin{example}\label{example:normal} Let $H$ be a bialgebra in $\VV$ with unit $\eta$, comultiplication $\Delta$, and multiplication $\mu$.  The morphism $(\eta, \eta)\colon (R , R )\to (H,H)$ in $\Comodalg_{\VV}$ is a relative Hopf-Galois extension if and only if 
$$H\otimes H \xrightarrow {H\otimes \Delta} H\otimes H\otimes H\xrightarrow {\mu\otimes H} H\otimes H$$
is an isomorphism.  If $\VV$ is the category of $R $-modules for some commutative ring $R $, then this condition is equivalent to requiring that $H$ admit an antipode, i.e., that $H$ be a Hopf algebra, in the classical sense of the word \cite[Example 2.1.2]{schauenburg}.  If $\VV$ is the category of (differential) \emph{graded} $R $-modules, then, as is well known, every connected bialgebra $H$ satisfies the condition above \cite[Proposition 3.8.8]{hazewinkel-etal}.  

Inspired by the classical case, we make the following definition.
\begin{definition}
We say that a bialgebra $H$ in $\VV$ is a \emph{Hopf algebra} if the map
$$(\eta, \eta)\colon  (R , R )\to (H,H)$$
is a relative Hopf-Galois extension in the sense of Definition \ref{def:relhgext}. More generally, we say that a morphism of bialgebras $\Gamma\colon H\to K$ is a \emph{relative Hopf algebra} if
$$(\Gamma, \Gamma)\colon (H,H) \to (K,K)$$
is a relative Hopf-Galois extension, i.e., if
\begin{equation}\label{eqn:relHopf}K\otimes _{H}K \xrightarrow {K\otimes _{H}\Delta_{K}} K\otimes _{H}K\otimes K\xrightarrow {\bar \mu_{K}\otimes \pi_{\Gamma}} K\otimes \Cof(\Gamma)
\end{equation}
is a isomorphism.
\end{definition}
For example, if $H$ is any bialgebra, and $H'$ is a Hopf algebra, then the bialgebra morphism $H\otimes \eta'\colon   H\to H\otimes H'$ is a relative Hopf algebra. 

 If $\VV$ is the category of (differential) graded $R$-modules for some commutative ring $R$, then a morphism $\Gamma\colon H\to K$ of bialgebras is a relative Hopf algebra if the left $H$-module and right $\Cof(\Gamma)$-comodule underlying $K$ is isomorphic to $H\otimes \Cof(\Gamma)$.  By \cite[Theorem 4.4]{milnor-moore}, $K$ admits such a description if $H$  and $K$ are connected, while $\Gamma\colon  H\to K$ is split injective and $\pi_{\Gamma}\colon  K \to \Cof(\Gamma)$ is split surjective, as morphisms of graded $R$-modules.  In particular, if $R$ is a field, then $\Gamma$ is a relative Hopf algebra if it is injective.

For any algebra $E$ in $\VV$, and any relative Hopf algebra $\Gamma\colon  H\to K$,  let 
$$(A, \rho_{A})=(E\otimes H, E\otimes \Delta_{H})\quad\text{and}\quad (B, \rho_{B})=(E\otimes K, E\otimes \Delta_{K}).$$
The morphism $\big(H,(A, \rho_{A})\big) \xrightarrow{(\Gamma, E\otimes \Gamma)} \big(K,(B, \rho_{B})\big)$ in $\Comodalg_{\VV}$ is then a generalized Hopf-Galois extension, as $\vp^{\mathrm{co} \Gamma}$ is simply the identity on $E$, while 
$\Gal_{(\Gamma, E\otimes \Gamma)}$ is given by applying the functor $E\otimes-$ to the composite (\ref{eqn:relHopf}).   Following classical terminology, we call this morphism a \emph{normal relative Hopf-Galois extension} with \emph{normal basis} $E$.
\end{example}

\subsection{The homotopic Hopf-Galois framework}

\begin{convention}\label{conv:hhg} Henceforth $\VV$ denotes a symmetric monoidal model category satisfying Convention \ref{conv:m-t} and the CHF condition (Definition \ref{definition:CHF}). 
We also assume that $\Alg_{\VV}$ is equipped with a model category structure with weak equivalences created in $\VV$ and that the category $\Alg_{\VV}^{H}$ of $H$-comodule algebras with the model category structure right-induced from that of $\VV^{H}$ (the category of $H$-comodules in $\VV$, where we have forgotten the multiplicative structure on $H$) via the free-algebra/forgetful adjunction,  for any bialgebra $H$ that we consider.  It follows that 
$$\bigadjunction{\Alg_{\VV}^{H}}{\Alg_{\VV}^{K}}{\Gamma_{*}}{\Gamma^{*}}$$ 
is a Quillen adjunction for every morphism $\Gamma\colon   H\to K$ of bialgebras; see Remark \ref{rmk:QA-coring map}.   Explicit examples of such model category structures can be found in \cite{hess-shipley:retractive} and \cite{hkrs}.
\end{convention}

\begin{definition}  Let $A$ be an $H$-comodule algebra.  For any fibrant replacement $A^{f}$ of $A$ in $\Alg_{\VV}^{H}$, the algebra $(A^{f})^{\mathrm{co}H}$ is a \emph{model of the homotopy coinvariants} of the $H$-coaction on $A$, denoted (somewhat abusively) $A^{\mathrm{hco}H}$. 
\end{definition}

Given an object $(\Gamma, \vp)\colon   (H, A)\to (K, B)$ in $\Comodalg_{\VV}^{\to}$, we can construct an associated morphism of algebras $\vp^{\mathrm{hco}\Gamma}\colon   A^{\mathrm{hco}H}\to B^{\mathrm{hco}K}$ as follows, inspired by Remark \ref{remark:coinvariants}.  Let 
$$i_{B}\colon  B\xrightarrow \sim  B^{f}\quad \text{and} \quad i_{A}\colon  \Gamma_{*}A\xrightarrow \sim (\Gamma_{*}A)^{f}$$ 
be fibrant replacements in $\Alg_{\VV}^{K}$, and let $\vp^{f}\colon  (\Gamma_{*}A)^{f}\to B^{f}$ be an extension of $\vp$ to the fibrant replacements.   Since $\Gamma^{*}\colon \Alg_{\VV}^{K}\to \Alg_{\VV}^{H}$ is a right Quillen functor, 
$$\Gamma^{*}\big(\vp^{f}\big)\colon \Gamma^{*}\big((\Gamma_{*}A)^{f}\big)\to \Gamma^{*}\big(B^{f}\big)$$ 
is a morphism of fibrant $H$-comodule algebras.  

Let  $j\colon  A\xrightarrow \sim  A^{f}$ be any fibrant replacement  in $\Alg_{\VV}^{H}$. The composite morphism of $H$-comodule algebras
$$A\xrightarrow {\eta^{\Gamma}_{A}} \Gamma^{*}(\Gamma_{*}A) \xrightarrow{\Gamma^{*}(i_{A})} \Gamma^{*}\big((\Gamma_{*}A)^{f}\big)$$
extends to a morphism of $H$-comodule algebras 
$$\tilde \imath\colon   A^{f}\to  \Gamma^{*}\big((\Gamma_{*}A)^{f}\big),$$
since $j$ is an acyclic cofibration, and $ \Gamma^{*}\big((\Gamma_{*}A)^{f}\big)$ is fibrant.  A model for 
$$\vp^{\mathrm{hco}\Gamma}\colon   A^{\mathrm{hco}H}\to B^{\mathrm{hco}K}$$ 
is then given by the composite
\smallskip

\begin{equation}\label{eqn:phi-hcogamma}(A^{f})^{\mathrm{co}H}\xrightarrow{\tilde\imath^{\mathrm{co}H}}  \Big(\Gamma^{*}\big((\Gamma_{*}A)^{f}\big)\Big)^{\mathrm{co}H}\xrightarrow {\big(\Gamma^{*}(\vp^{f})\big)^{\mathrm{co}H}}\Big(\Gamma^{*}\big(B^{f}\big)\Big)^{\mathrm{co}H}\cong (B^{f})^{\mathrm{co}K}.
\end{equation}
\smallskip

To define homotopic relative Hopf-Galois extensions, we now modify somewhat  the approach  of \cite[Definition 3.2]{hess:hhg}, categorifying both conditions instead of just one. As we see below, under reasonable hypotheses a homotopic Hopf-Galois extension in the sense of  \cite[Definition 3.2]{hess:hhg} also satisfies the conditions of the modified definition below. 

\begin{definition}\label{definition:hhge}  A morphism  $(\Gamma, \vp)\colon   (H, A)\to (K, B)$ in $\Comodalg_{\VV}$ is a \emph{relative homotopic Hopf-Galois extension} if both of the adjunctions
$$\hugeadjunction{\VV_{A^{\mathrm{hco} H}}}{\VV_{B^{\mathrm{hco} K}}}{(\vp^{\mathrm{hco} \Gamma})_{*}}{(\vp^{\mathrm{hco} \Gamma})^{*}}$$
and
$$\hugeadjunction{\VV_{B}^{\Desc(\vp)}}{\VV_{B}^{\Hopf(\rho_{B}, \bar\mu_{K})}}{\Gal(\Gamma, \vp)_{*}}{\Gal(\Gamma, \vp)^{*}}$$
are Quillen equivalences.

A morphism $\Gamma\colon  H \to K$ of bialgebras in $\VV$ is a \emph{relative homotopic Hopf algebra} if $(\Gamma, \Gamma)\colon   (H,H)\to (K,K)$ is a relative homotopic Hopf-Galois extension.
\end{definition}

\begin{remark}  The definition of homotopic Hopf-Galois extension is independent of the choice of fibrant replacements for $A$ and $B$ underlying the definition of $A^{\mathrm{hco} H}$ and $B^{\mathrm{hco} K}$, since $\VV$ satisfies the CHF hypothesis, whence all weak equivalences of algebras are pure and therefore induce Quillen equivalences on module categories (Propositions \ref{prop:pure we alt} and \ref{prop:resext}).
\end{remark}

\begin{remark} In the special case of a morphism of the form $(R,A) \xrightarrow{(\eta_{H}, \vp)} (H,B)$ in $\Comodalg_{\VV}$, we recover a slightly modified version of the definition of a homotopic $H$-Hopf-Galois extension from \cite{hess:hhg}.
\end{remark}

\begin{remark}  In \cite{rognes}  Rognes defined homotopic Hopf-Galois extensions of commutative ring spectra in a convenient symmetric monoidal model category $\cat{S}$ of spectra, such as symmetric spectra and $S$-modules. According to his conventions, a morphism $(S,A) \xrightarrow{(\eta_{H}, \vp)} (H,B)$ in $\Comodalg_{\cat{S}}$, where $S$ is the sphere spectrum, and $A$ and $B$ are commutative $S$-algebras, is a homotopic Hopf-Galois extension if  the composite
$$A=A^{\mathrm{co} H} \xrightarrow{j ^{\mathrm{co} H}} A^{\mathrm{hco} H}\xrightarrow{\vp^{\mathrm{hco} \Gamma}} B^{\mathrm{hco} H},$$
where $j\colon A \xrightarrow \sim A^{f}$ is a fibrant replacement in $\Alg_{\cat S}^{H}$, and
$$\beta_{\vp}=\Gal_{(\eta_{H}, \vp)}\colon   \Desc(\vp) \to \Hopf(\rho_{B})$$
are weak equivalences, where $B^{\mathrm{hco} H}$ is modelled explicitly as the totalization of a certain cosimplicial ``cobar''-type construction. 

As it is still work in progress to show that all of conditions of Hypothesis \ref{conv:hhg} hold in various incarnations of $\cat S$ (cf.~\cite[Corollary 5.6]{hess-shipley:retractive}), we cannot yet apply the results below characterizing homotopic Hopf-Galois extensions to conclude that Rognes's definition fits precisely into our framework, but we strongly suspect that it is the case.
\end{remark}

\begin{remark} The generalization of homotopic Hopf-Galois extensions to a relative framework is not merely an idle exercise. Indeed, as shown in \cite{hess-karpova}, the formulation of one direction of a Hopf-Galois correspondence for Hopf-Galois extensions of differential graded algebras requires such relative extensions.
\end{remark}

As an immediate consequence of Proposition \ref{prop:resext} and Corollary \ref{cor:QE conditions}, we obtain conditions under which a morphism of comodule algebras is a relative homotopic Hopf-Galois extension.

\begin{proposition}\label{prop:hhg conditions} Let $\VV$ be a symmetric monoidal model category satisfying Convention \ref{conv:hhg}.   Let $(\Gamma, \vp)\colon   (H, A)\to (K, B)$ be a morphism in $\Comodalg_{\VV}$.

If $\vp^{\mathrm{hco} \Gamma}\colon   A^{\mathrm{hco} H} \to B^{\mathrm{hco} K}$ is a weak equivalence and  $\Gal(\Gamma, \vp)\colon   \Desc(\vp)\to \Hopf(\rho_{B}, \bar\mu_{K})$ is a copure weak equivalence, then $(\Gamma, \vp)$   is a relative homotopic Hopf-Galois extension.
\end{proposition}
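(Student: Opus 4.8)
The plan is to verify the two conditions in Definition~\ref{definition:hhge} separately, treating the coinvariants adjunction with Proposition~\ref{prop:resext} and the Galois adjunction with Proposition~\ref{prop:we-QE}. Both pieces are essentially immediate given the hypotheses, once one checks that the relevant Quillen-pair preconditions are in place.

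First I would dispatch the module adjunction. By hypothesis $\vp^{\mathrm{hco}\Gamma}\colon A^{\mathrm{hco}H}\to B^{\mathrm{hco}K}$ is a weak equivalence of algebras in $\VV$. Since $\VV$ satisfies the CHF hypothesis by Convention~\ref{conv:hhg}, Proposition~\ref{prop:pure we alt} tells us that this weak equivalence is automatically a pure weak equivalence of right $A^{\mathrm{hco}H}$-modules. To invoke Proposition~\ref{prop:resext} we also need $(\vp^{\mathrm{hco}\Gamma})^{*}$ to preserve fibrations; here I would note that under Hypothesis~\ref{conv:module} the restriction functor along any algebra map creates fibrations (fibrations of modules are created in $\VV$, and restriction of scalars does not change the underlying object), so this is automatic. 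Proposition~\ref{prop:resext} then gives that
$$\hugeadjunction{\VV_{A^{\mathrm{hco}H}}}{\VV_{B^{\mathrm{hco}K}}}{(\vp^{\mathrm{hco}\Gamma})_{*}}{(\vp^{\mathrm{hco}\Gamma})^{*}}$$
is a Quillen equivalence.

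Next I would handle the Galois adjunction. The map $\Gal(\Gamma,\vp)\colon \Desc(\vp)\to \Hopf(\rho_{B},\bar\mu_{K})$ is a morphism of $B$-corings, and the induced adjunction between the comodule categories is precisely the change-of-corings adjunction $f_{*}\dashv f^{*}$ of \eqref{eq:corestriction} for $f=\Gal(\Gamma,\vp)$ (it has identity algebra component). By Remark~\ref{rmk:QA-coring map} this is a Quillen adjunction. Since by hypothesis $\Gal(\Gamma,\vp)$ is a copure weak equivalence of $B$-corings, Proposition~\ref{prop:we-QE} applies directly and yields that
$$\hugeadjunction{\VV_{B}^{\Desc(\vp)}}{\VV_{B}^{\Hopf(\rho_{B},\bar\mu_{K})}}{\Gal(\Gamma,\vp)_{*}}{\Gal(\Gamma,\vp)^{*}}$$
is a Quillen equivalence. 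Having verified both conditions, $(\Gamma,\vp)$ is a relative homotopic Hopf-Galois extension.

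The only point requiring any care---the "main obstacle," such as it is---is the bookkeeping needed to identify the two adjunctions appearing in Definition~\ref{definition:hhge} with the adjunctions to which Propositions~\ref{prop:resext} and~\ref{prop:we-QE} apply, and to confirm the fibration-preservation hypothesis of Proposition~\ref{prop:resext}. Neither is substantive: the first is a matter of unwinding the notation in \eqref{eq:corestriction}, and the second follows because restriction of scalars along an algebra map is right Quillen between module categories whose fibrations are created in $\VV$. Modulo these observations the statement is a formal corollary of the two cited propositions together with the CHF hypothesis built into Convention~\ref{conv:hhg}.
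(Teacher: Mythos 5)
Your proposal is correct and matches the paper's argument exactly: the paper offers no separate proof, simply declaring the result an immediate consequence of Proposition \ref{prop:resext} and Corollary \ref{cor:QE conditions} (the latter, applied with identity algebra component, is precisely the change-of-corings statement of Proposition \ref{prop:we-QE} that you invoke). The one caveat is that your justification for $(\vp^{\mathrm{hco}\Gamma})^{*}$ preserving fibrations assumes fibrations of $A$-modules are created in $\VV$, which is stronger than Hypothesis \ref{conv:module} literally guarantees (it only requires weak equivalences to be created in $\VV$, and left-induced module structures need not have underlying fibrations as fibrations), though the paper itself elides this same point.
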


\begin{corollary}\label{cor:htpic rel HA} Let $\VV$ be a symmetric monoidal model category satisfying Convention \ref{conv:hhg}. If the unit $\kk$ is fibrant, then a morphism $\Gamma\colon   H\to K$ of bialgebras in $\VV$ is a relative homotopic Hopf algebra if $\Gal (\Gamma, \Gamma)\colon   \Desc(\Gamma) \to \Hopf(\Delta_{K}, \bar\mu_{K})$ is a copure weak equivalence.
\end{corollary}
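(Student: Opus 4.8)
The plan is to derive the corollary from Proposition~\ref{prop:hhg conditions}, applied to the morphism $(\Gamma,\Gamma)\colon(H,H)\to(K,K)$ of $\Comodalg_\VV$. That proposition asks for two inputs: that $\Gal(\Gamma,\Gamma)\colon\Desc(\Gamma)\to\Hopf(\Delta_K,\bar\mu_K)$ be a copure weak equivalence — which is exactly the hypothesis — and that the induced morphism on homotopy coinvariants $\Gamma^{\mathrm{hco}\Gamma}\colon H^{\mathrm{hco}H}\to K^{\mathrm{hco}K}$ be a weak equivalence of algebras. So the whole argument reduces to the latter, and in fact I would exhibit a choice of fibrant replacements for which $\Gamma^{\mathrm{hco}\Gamma}$ is literally the identity of $\kk$.

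The crucial point is that, for \emph{any} bialgebra $H$, the pair $(H,\Delta_H)$ — which lies in $\Alg_\VV^H$ because the coaction $\Delta_H$ is a morphism of algebras — is already fibrant in $\Alg_\VV^H$ once $\kk$ is fibrant. Forgetting the multiplication, $(H,\Delta_H)$ is the cofree $H$-comodule $\kk\otimes_\kk H$ on the monoidal unit. Since the model structure on $\VV^H$ is left-induced from $\VV$ along the forgetful functor $\UU$, the functor $\UU\colon\VV^H\to\VV$ is left Quillen, hence the cofree functor $-\otimes_\kk H\colon\VV\to\VV^H$ is right Quillen and preserves fibrant objects; as $\kk$ is fibrant, $(H,\Delta_H)$ is therefore fibrant in $\VV^H$, and hence also in $\Alg_\VV^H$, whose fibrations and weak equivalences are created in $\VV^H$. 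Thus $(H,\Delta_H)$ serves as its own fibrant replacement in $\Alg_\VV^H$, and likewise $(K,\Delta_K)$ in $\Alg_\VV^K$; and, as observed after Definition~\ref{definition:hhge}, the resulting models may be used to compute $H^{\mathrm{hco}H}$ and $K^{\mathrm{hco}K}$ without affecting the notion of relative homotopic Hopf–Galois extension.

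Next I would compute $(H,\Delta_H)^{\mathrm{co}H}$ from the defining adjunction between $\Alg_\VV$, with trivial coaction, and $\Alg_\VV^H$: an algebra map $f\colon T\to H$ underlies a morphism $(\eta_H)_*(T)\to(H,\Delta_H)$ of $H$-comodule algebras precisely when $f$ equalizes $\Delta_H$ and $\mathrm{id}_H\otimes\eta_H\colon H\rightrightarrows H\otimes H$; postcomposing with $\epsilon_H\otimes\mathrm{id}_H$ and using that $\epsilon_H$ is an algebra map (so $\epsilon_H\circ\eta_H=\mathrm{id}_\kk$) identifies that equalizer with $\eta_H\colon\kk\to H$. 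Hence $(H,\Delta_H)^{\mathrm{co}H}\cong\kk$, and symmetrically $(K,\Delta_K)^{\mathrm{co}K}\cong\kk$. With these models $\Gamma^{\mathrm{hco}\Gamma}$ is a morphism $\kk\to\kk$ in $\Alg_\VV$; since $\kk$ is the initial object of $\Alg_\VV$, it is the identity and in particular a weak equivalence. Feeding this together with the hypothesis into Proposition~\ref{prop:hhg conditions} concludes that $(\Gamma,\Gamma)$ is a relative homotopic Hopf–Galois extension, i.e., $\Gamma$ is a relative homotopic Hopf algebra.

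The one genuinely delicate step is the fibrancy of $(H,\Delta_H)$: it requires keeping straight the interaction between the left-induced model structure on $\VV^H$ (where $\UU$ is left Quillen, so the cofree functor is right Quillen) and the right-induced model structure on $\Alg_\VV^H$ (where fibrancy is detected by the forgetful functor), together with the observation that $(H,\Delta_H)$ really does lift from $\VV^H$ to $\Alg_\VV^H$. After that, the coinvariants computation is a routine equalizer manipulation and the identification $\Gamma^{\mathrm{hco}\Gamma}=\mathrm{id}_\kk$ is immediate from the initiality of $\kk$, so no further obstacle arises.
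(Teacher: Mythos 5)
Your proposal is correct and follows essentially the same route as the paper: both reduce the claim to Proposition~\ref{prop:hhg conditions} and then observe that, since $\kk$ is fibrant, $H$ and $K$ are fibrant in $\Alg_\VV^H$ and $\Alg_\VV^K$ respectively, so that $\Gamma^{\mathrm{hco}\Gamma}$ is modelled by the identity of $\kk$. You merely spell out the details the paper leaves implicit (fibrancy of $H$ as the cofree comodule on $\kk$, the equalizer computation $H^{\mathrm{co}H}\cong\kk$, and initiality of $\kk$ in $\Alg_\VV$), all of which check out.
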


\begin{proof}  Since $\kk$ is fibrant in $\VV$, it is fibrant in $\Alg_{\VV}$, whence both $H$ and $K$ are fibrant in their respective categories of comodule algebras.  It follows that the identity on $\kk$ is a model of $\Gamma^{\mathrm{hco}\Gamma}\colon  H^{\mathrm{hco}H}\to K^{\mathrm{hco}K}$.
\end{proof}

\section{Homotopic Hopf-Galois extensions of chain algebras}\label{sec:ch cx}
In this section we illustrate the theory of the previous section when the underlying monoidal model category is that of unbounded chain complexes over a commutative ring $R$, endowed with the usual monoidal structure and the \emph{Hurewicz model structure} \cite{barthel-may-riehl}, in which the weak equivalences are the chain homotopy equivalences, the fibrations are the degreewise-split surjections, and the cofibrations are the degreewise-split injections.  In particular we provide a large class of examples of homotopic Hopf-Galois extensions and  prove a theorem analogous to the descent-type description of homotopic Hopf-Galois extensions in  \cite[Proposition 12.1.8]{rognes}.  The work in this section builds on \cite[Section 5]{berglund-hess:morita}, the key results of which we recall below.

\subsection{Homotopy theory of chain modules and comodules}
Let $A$ be an algebra in $\Ch_{R}$. As shown in \cite[Theorems 4.5, 4.6, and 6.12]{barthel-may-riehl},  the category $(\Ch_{R})_{A}$ admits a proper, monoidal model category structure right-induced from the Hurewicz structure on $\Ch_{R}$ by the adjunction
$$\bigadjunction{\Ch_{R}}{(\Ch_{R})_{A},}{-\tensor A}{\UU}$$ 
which we call the \emph{relative model structure}. A morphism of $A$-modules is thus a weak equivalence (respectively, fibration) in the relative structure if the underlying morphism of chain complexes is a chain homotopy equivalence  (respectively, a degreewise-split surjection). We call the distinguished classes with respect to the relative model structure \emph{relative weak equivalences}, \emph{relative fibrations}, and \emph{relative cofibrations}, and $A$-modules that are cofibrant with respect to the relative model structure are called \emph{relative cofibrant}. The category of left modules admits an analogous relative structure.

Barthel, May, and Riehl provided the following useful characterization of relative cofibrant objects in  ${}_{A}(\Ch_{R})$.  A similar result holds for $(\Ch_{R})_{A}$.

\begin{proposition}\cite[Theorem 9.20]{barthel-may-riehl}\label{prop:bmr-cofib} An object $M$ in ${}_{A}(\Ch_{R})$ is relative cofibrant  if and only if it is a retract of an $A$-module $N$ that admits a filtration
$$0=F_{-1}N\subseteq F_{0}N\subseteq \cdots \subseteq F_{n}N\subseteq F_{n+1}N \subseteq \cdots$$
where $N=\bigcup_{n\geq 0}F_{n}N$ and for each $n\geq 0$, there is chain complex $X(n)$ with 0 differential such that $F_{n}N/F_{n-1}N\cong A \otimes X(n)$.
\end{proposition}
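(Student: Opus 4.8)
The plan is to reduce the statement to a structural analysis of cofibrations in $(\Ch_R)_A$ generated from the Hurewicz structure on $\Ch_R$, mimicking the proof of \cite[Theorem 9.20]{barthel-may-riehl} that is cited for the left-module case and merely asserting the analogous right-module statement. Since the relative model structure on $(\Ch_R)_A$ is right-induced along $-\tensor A \dashv \UU$, the relative cofibrations are exactly the retracts of transfinite composites of pushouts of the maps $A\tensor i$, where $i$ ranges over a set of generating cofibrations for the Hurewicz structure on $\Ch_R$; the key input is that the Hurewicz cofibrations in $\Ch_R$ are the degreewise-split monomorphisms, and a degreewise-split mono $X\to Y$ has cofiber (in the naive graded sense) a complex that is levelwise a retract of $Y$.

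First I would set up the filtration on $N$ and run the argument in one direction: given $M$ relative cofibrant, exhibit it as a retract of a cell complex $N$ built by attaching cells $A\tensor(\text{generators})$. Because $\Ch_R$ with the Hurewicz structure has generating cofibrations whose (co)domains can be taken to be complexes with trivial differential — more precisely, the cofibrations are built from the inclusions of the form $S^{n-1}\to D^n$ in the appropriate Hurewicz/relative sense, whose associated graded pieces carry zero differential — each stage of the cellular filtration contributes a subquotient isomorphic to $A\tensor X(n)$ with $X(n)$ a chain complex with zero differential. The filtration $F_nN$ is the image of the $n$-th stage of the transfinite composite, $N=\bigcup_n F_nN$ holds by construction, and $F_nN/F_{n-1}N\cong A\tensor X(n)$ follows because pushing out along $A\tensor i$ adjoins a free summand $A\tensor(\text{cone of }i)$ which, after passing to associated graded, has zero differential.

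For the converse, I would observe that any $A$-module $N$ admitting such a filtration is relative cofibrant: each quotient $F_nN/F_{n-1}N\cong A\tensor X(n)$ is relative cofibrant because $X(n)$ has zero differential, hence is a Hurewicz-cofibrant object of $\Ch_R$ (it is a direct sum of shifts of $R$, and $0\to R[k]$ is a degreewise-split mono), so $0\to A\tensor X(n)$ is a relative cofibration; then the extension $F_{n-1}N\to F_nN$ is a relative cofibration because it is (up to the split exact sequence $0\to F_{n-1}N\to F_nN\to A\tensor X(n)\to 0$, which splits \emph{as graded $A$-modules} since $A\tensor X(n)$ is degreewise projective — indeed degreewise free — over $A$) a pushout of $0\to A\tensor X(n)$; finally $0\to N = \colim_n F_nN$ is a relative cofibration as a transfinite composite of relative cofibrations, and retracts of relative cofibrations are relative cofibrations.

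**Main obstacle.** The delicate point is the claim that the attached cells at each stage contribute a subquotient with \emph{zero} differential, i.e.\ that the generating cofibrations for the Hurewicz structure on $\Ch_R$ can be chosen so that their cofibers have trivial differential, and that this property is inherited through pushouts and transfinite composition at the level of the associated graded of the cellular filtration. This is exactly the content of the argument in \cite[Theorem 9.20]{barthel-may-riehl} for left modules, and I would simply transpose it, checking that nothing in that proof used left- rather than right-module conventions beyond notation; the symmetry of the monoidal structure on $\Ch_R$ makes this transposition routine. A secondary, purely bookkeeping obstacle is ensuring the splittings used in the converse are chosen compatibly across the filtration, but since each $A\tensor X(n)$ is a free graded $A$-module this is automatic.
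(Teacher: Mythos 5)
The paper does not prove this statement at all: it is quoted directly from Barthel--May--Riehl (their Theorem~9.20), so there is no internal proof to compare yours against, and your proposal must stand on its own. It does not, because the forward direction rests on a step that fails. You reduce ``relative cofibrant'' to ``retract of a transfinite composite of pushouts of maps $A\tensor i$, where $i$ ranges over a \emph{set} of generating cofibrations for the Hurewicz structure on $\Ch_R$.'' This presupposes that the Hurewicz model structure is cofibrantly generated by a set of maps in the classical sense, which is not known and is expected to fail for general $R$; getting around precisely this obstruction is the point of the ``enriched cofibrant generation'' framework that Barthel--May--Riehl develop, and their Theorem~9.20 is proved with the \emph{enriched} small object argument, in which each stage attaches a single cell of the form $A\tensor X$ with $X$ an arbitrary graded $R$-module with zero differential (built from an enriched hom out of the generating spheres), rather than a set-indexed coproduct of ordinary cells. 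That is also why the statement allows arbitrary zero-differential $X(n)$ rather than free graded modules. Relatedly, your justification that the attached cells carry zero differential --- ``a degreewise-split mono $X\to Y$ has cofiber a complex that is levelwise a retract of $Y$'' --- does not give what you need: the cofiber of a Hurewicz cofibration need not have zero differential (consider $0\to C$ for any complex $C$). The zero-differential property of the associated graded comes from the specific choice of enriched generating cofibrations, not from general structure of degreewise split monomorphisms.

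The converse direction is essentially sound, but one sentence in it is wrong as stated: $F_nN$ is \emph{not} a pushout of $0\to A\tensor X(n)$ along $0\to F_{n-1}N$ --- that pushout is the direct sum with the untwisted differential, whereas the differential of $F_nN$ may carry the graded complement $A\tensor X(n)$ into $F_{n-1}N$. The correct realization is as a pushout of $A\tensor j$, where $j$ is the inclusion of the desuspension of $X(n)$ into its cone and the attaching map $A\tensor \Sigma^{-1}X(n)\to F_{n-1}N$ is determined by the restriction of $d_N$ to the graded complement (a chain map because $X(n)$ has zero differential and $d_N^2=0$). Since $j$ is a degreewise split monomorphism, $A\tensor j$ is a relative cofibration, and the rest of your argument --- closure under pushout, transfinite composition, and retract --- then goes through.
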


Barthel, May, and Riehl call  filtrations of this sort \emph{cellularly $r$-split} and show that the  inclusion maps $F_{n-1}N\to F_{n}N$ are split as nondifferential, graded $A$-modules (cf.~\cite[Definition 9.17]{barthel-may-riehl}). Note in particular that $A$ itself is always cofibrant as a right or left $A$-module.

\begin{remark}\label{rmk:cofib-tens}  It follows from Proposition \ref{prop:bmr-cofib} that if an object $M$ in ${}_{A}(\Ch_{R})$ is relative cofibrant, then $M\otimes Y$ is also relative cofibrant, for every chain complex $Y$.  Indeed, if $M$ is a retract of a left $A$-module $N$ with a cellularly $r$-split filtration 
$$0=F_{-1}N\subseteq F_{0}N\subseteq \cdots \subseteq F_{n}N\subseteq F_{n+1}N \subseteq \cdots,$$
then $M\otimes Y$ is a retract of the left $A$-module $N\otimes Y$, which has a (not necessarily cellularly) $r$-split filtration 
$$0=(F_{-1}N)\otimes Y\subseteq (F_{0}N)\otimes Y\subseteq \cdots \subseteq (F_{n}N)\otimes Y\subseteq (F_{n+1}N)\otimes Y \subseteq \cdots,$$
and thus also a cellularly $r$-split filtration by \cite[Theorem 9.20]{barthel-may-riehl}.
\end{remark}

Specializing the definition of cellularly $r$-split filtrations somewhat, we obtain an important class of relative cofibrant modules.

\begin{definition}  An object $M$ in ${}_{A}(\Ch_{R})$ is \emph{flat-cofibrant} with respect to the relative model structure if it is a retract of an $A$-module $N$ that admits a cellularly $r$-split filtration
$$0=F_{-1}N\subseteq F_{0}N\subseteq \cdots \subseteq F_{n}N\subseteq F_{n+1}N \subseteq \cdots$$
with  $F_{n}N/F_{n-1}N\cong A \otimes X(n)$ where $X(n)$ is degreewise $R$-flat, which we call a \emph{cellularly $r$-split flat filtration}.
\end{definition}

\begin{proposition}\label{prop:ch-special-modules}\cite[Proposition 5.7]{berglund-hess:morita}  Let $A$ be an algebra in $\Ch_{R}$, and let $N$ be a left $A$-module.
\begin{enumerate}
\item If $N$ is cofibrant in the relative structure on ${}_{A}(\Ch_{R})$, then it is is homotopy flat and homotopy projective.    In particular, the category ${}_{A}(\Ch_{R})$ satisfies the CHF hypothesis (Definition \ref{definition:CHF}). 
\item If $N$ is flat-cofibrant in the relative structure on ${}_{A}(\Ch_{R})$, then it is flat and therefore strongly homotopy flat.
\item If  $N$ contains $A$ as a summand, then $N$ is homotopy faithful and homotopy cofaithful.  
\item Every algebra morphism $A\to B$ with underlying chain homotopy equivalence is homotopy pure.
\end{enumerate}
\end{proposition}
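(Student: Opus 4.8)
The plan is to derive all four statements from the cellular description of relative cofibrant modules in Proposition~\ref{prop:bmr-cofib}, together with the fact that weak equivalences in the relative structures are, by definition, the chain homotopy equivalences of the underlying complexes. For (1), let $N$ be relative cofibrant in ${}_A(\Ch_R)$; by Proposition~\ref{prop:bmr-cofib}, $N$ is a retract of a left $A$-module $L=\bigcup_{n\ge 0}F_nL$ carrying a cellularly $r$-split filtration, i.e.\ one in which each inclusion $F_{n-1}L\hookrightarrow F_nL$ is split as graded $A$-modules and $F_nL/F_{n-1}L\cong A\tensor X(n)$ for some complex $X(n)$ with zero differential. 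Since $-\tensor_A N$ and $\Map_A(N,-)$ are additive they carry retracts to retracts, so it suffices to prove homotopy flatness and homotopy projectivity for $L$. For a free module there are natural isomorphisms $P\tensor_A(A\tensor X)\cong P\tensor_R X$ and $\Map_A(A\tensor X,Q)\cong\Map_R(X,Q)$; as tensoring or mapping against a fixed complex preserves $R$-linear chain homotopies, both claims hold for $A\tensor X$. The general case follows by induction along the filtration: applying $P\tensor_A-$ (respectively $\Map_A(-,Q)$) to the graded-split short exact sequence $0\to F_{n-1}L\to F_nL\to A\tensor X(n)\to 0$ gives a degreewise-split short exact sequence of complexes, and a morphism between two such sequences that is a chain homotopy equivalence on the outer terms is one on the middle term too, since the induced morphism of mapping cones lies in a degreewise-split short exact sequence of cones with contractible ends and hence has contractible middle. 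To pass from the $F_nL$ to $L$ in the homotopy-flat case, note that $P\tensor_A L=\colim_n P\tensor_A F_nL$ along degreewise-split monomorphisms, so the mapping cone of the image under $-\tensor_A L$ of a weak equivalence of right $A$-modules is the colimit, along degreewise-split monomorphisms, of the contractible cones over the $F_nL$, hence contractible: contracting homotopies can be chosen compatibly, extending step by step along each split inclusion, using that each cokernel is contractible. In the homotopy-projective case $\Map_A(-,Q)$ turns this colimit into an inverse limit along degreewise-split epimorphisms and the argument dualizes. This proves $L$, hence $N$, is homotopy flat and homotopy projective; the concluding sentence of (1) is the remark that ``relative cofibrant $\Rightarrow$ homotopy flat'' is precisely the CHF hypothesis (Definition~\ref{definition:CHF}).

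For (2), $N$ is a retract of a left $A$-module with a cellularly $r$-split flat filtration, whose subquotients are $A\tensor X(n)$ with $X(n)$ degreewise $R$-flat; then $-\tensor_A(A\tensor X(n))\cong-\tensor_R X(n)$ is exact (checked degree by degree), and exactness is preserved by the degreewise-split extensions along the filtration and by the filtered colimit that assembles $N$, so $-\tensor_A N$ is exact, i.e.\ $N$ is flat; a flat module preserves finite limits on the nose and so is in particular strongly homotopy flat (homotopy flatness of $N$ already follows from (1)). For (3), writing $N\cong A\oplus N''$ gives natural isomorphisms $P\tensor_A N\cong P\oplus(P\tensor_A N'')$ and $\Map_A(N,Q)\cong Q\times\Map_A(N'',Q)$, so $P$ and $Q$ are natural retracts of $P\tensor_A N$ and $\Map_A(N,Q)$; since a retract of a chain homotopy equivalence is a chain homotopy equivalence, $-\tensor_A N$ and $\Map_A(N,-)$ reflect weak equivalences, i.e.\ $N$ is homotopy faithful and homotopy cofaithful. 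For (4), an algebra morphism $\vp\colon A\to B$ whose underlying chain map is a chain homotopy equivalence is a weak equivalence of $A$-modules on either side (weak equivalences being created in $\Ch_R$), so for any cofibrant $A$-module $M$ on the opposite side part~(1) makes $M$ homotopy flat, whence the relevant tensor of $\vp$ with $M$ is a weak equivalence; this is exactly the assertion that $\vp$ is a pure weak equivalence, i.e.\ homotopy pure. (Alternatively, once (1) supplies the CHF hypothesis this is immediate from Proposition~\ref{prop:pure we alt}.)

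I expect the only genuine work to be in part~(1): the ambient weak equivalences are chain homotopy equivalences rather than quasi-isomorphisms, so one cannot reason degreewise, and the homotopy inverses and contracting homotopies at issue are not $A$-linear and must be propagated by hand through the cellular filtration and, crucially, through its (co)limit. Once this bookkeeping is in place, parts~(2)--(4) are formal.
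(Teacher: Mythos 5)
Your proof is correct, and the argument you give -- the cellular $r$-split filtration of Proposition \ref{prop:bmr-cofib}, reduction to the free subquotients $A\otimes X(n)$, the Homotopy Five Lemma for the filtration steps, and compatible (co)extension of contracting homotopies across the (co)limit, with (2)--(4) then formal -- is essentially the standard one and matches the toolkit this paper deploys elsewhere (e.g.\ in the proof of Proposition \ref{prop:hyp1}). Note that the paper itself gives no proof here: the statement is quoted from \cite[Proposition 5.7]{berglund-hess:morita}, whose argument proceeds along the same lines as yours.
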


Together with condition (1) of the proposition above, Proposition 2.11 immediately implies the following result.

\begin{proposition}\label{prop:resext-chain} Let $\vp\colon A\to B$ be a morphism of algebras in $\Ch_{R}$. The induced restriction/extension-of-scalars adjunction 
$$\bigadjunction{\VV_A}{\VV_B}{\vp_{*}}{\varphi^*},$$
is a Quillen equivalence with respective to the relative model structures if and only if the chain map underlying $\varphi\colon A\rightarrow B$ is a chain homotopy equivalence.
\end{proposition}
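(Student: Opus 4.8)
The plan is to read this off from the Morita-theoretic Proposition~\ref{prop:resext}, after verifying its hypothesis and then replacing ``pure weak equivalence'' by ``weak equivalence''. First I would check that the right adjoint $\vp^{*}$ preserves fibrations in the relative model structures. By construction the relative structure on $(\Ch_{R})_{A}$ (and on $(\Ch_{R})_{B}$) has its fibrations and weak equivalences created by the forgetful functor to $\Ch_{R}$ with the Hurewicz structure, and $\vp^{*}$ acts as the identity on underlying chain complexes; hence $\vp^{*}$ preserves --- indeed creates --- relative fibrations and relative weak equivalences. In particular $\vp_{*}\dashv\vp^{*}$ is always a Quillen pair, and Proposition~\ref{prop:resext} applies: the adjunction is a Quillen equivalence if and only if $\vp\colon A\to B$ is a pure weak equivalence of right $A$-modules.

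It then remains to identify this last condition with the underlying chain map of $\vp$ being a chain homotopy equivalence. By Proposition~\ref{prop:ch-special-modules}(1), the category $\Ch_{R}$ satisfies the CHF hypothesis, so Proposition~\ref{prop:pure we alt} tells us that for right $A$-modules the notions of pure weak equivalence and weak equivalence coincide. Since relative weak equivalences of $A$-modules are by definition those morphisms whose underlying chain map is a chain homotopy equivalence, we conclude that $\vp$ is a pure weak equivalence of right $A$-modules if and only if its underlying chain map is a chain homotopy equivalence, which is exactly the statement.

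If one wants to avoid invoking Proposition~\ref{prop:pure we alt}, both directions follow directly from Proposition~\ref{prop:ch-special-modules}: the ``only if'' direction is obtained by evaluating the defining condition of a pure weak equivalence on the relative-cofibrant left $A$-module $A$, which under the isomorphisms $A\tensor_{A}A\cong A$, $B\tensor_{A}A\cong B$ forces $\vp\tensor_{A}A$, i.e.\ $\vp$ itself, to be a relative weak equivalence and hence a chain homotopy equivalence; and the ``if'' direction is part~(4) of that proposition, which says precisely that an algebra morphism with underlying chain homotopy equivalence is homotopy pure. Either way, there is no real obstacle here: the substance was already packaged into Propositions~\ref{prop:resext}, \ref{prop:pure we alt}, and~\ref{prop:ch-special-modules}, and the only point meriting a moment's care is the (immediate) verification that $\vp^{*}$ preserves relative fibrations, which is what makes Proposition~\ref{prop:resext} applicable.
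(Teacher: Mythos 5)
Your proposal is correct and follows essentially the same route as the paper, which derives the statement in one line from Proposition \ref{prop:ch-special-modules}(1) (giving the CHF hypothesis, hence the identification of pure weak equivalences with weak equivalences via Proposition \ref{prop:pure we alt}) together with Proposition \ref{prop:resext}. Your explicit check that $\vp^{*}$ preserves relative fibrations, and the alternative argument via Proposition \ref{prop:ch-special-modules}(4) and evaluation on $A$, are correct elaborations of details the paper leaves implicit.
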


The existence of model category structure for categories of comodules over corings in the context of unbounded chain complexes was proved in \cite{hkrs}.

\begin{theorem}\cite[Theorem 6.6.3]{hkrs} \label{thm:hkrs} Let $R$ be any commutative ring. For any algebra $A$ in $\Ch_{R}$ and any $A$-coring $C$, the category $(\Ch_{R})_A^C$ of $C$-comodules in $A$-modules admits a model category structure left-induced from the relative model structure  on $(\Ch_{R})_{A}$ via the forgetful functor.   
\end{theorem}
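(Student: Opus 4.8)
The plan is to realize the asserted structure on $(\Ch_{R})_{A}^{C}$ as \emph{left-induced} along the comonadic forgetful--cofree adjunction
$$\bigadjunction{(\Ch_{R})_{A}^{C}}{(\Ch_{R})_{A}}{\UU}{-\tensor_{A}C}$$
from the relative model structure on $(\Ch_{R})_{A}$ (recalled just above), and then to verify the hypotheses of the general existence criterion for left-induced model structures of \cite{hkrs}. Explicitly, one declares a morphism of $C$-comodules to be a weak equivalence, respectively a cofibration, exactly when its underlying morphism of $A$-modules is a relative weak equivalence, respectively a relative cofibration, and takes the fibrations to be the maps with the right lifting property against the morphisms that are both; the criterion then reduces the theorem to three points: that $(\Ch_{R})_{A}^{C}$ is locally presentable; that the relative model structure on $(\Ch_{R})_{A}$ satisfies the accessibility hypotheses under which the criterion applies; and the \emph{acyclicity condition}, namely that every left-induced trivial cofibration of $C$-comodules is a weak equivalence.

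The first two points are comparatively soft. Local presentability of $(\Ch_{R})_{A}^{C}$ holds because it is the category of coalgebras for the comonad $-\tensor_{A}C$ on the locally presentable category $(\Ch_{R})_{A}$: the comonad is cocontinuous, hence accessible, so the coalgebra category is accessible, and the forgetful functor $\UU$ creates colimits, so the coalgebra category is moreover cocomplete and therefore locally presentable. That the relative model structure on $(\Ch_{R})_{A}$ meets the hypotheses of the criterion is where the enriched small object argument of \cite{barthel-may-riehl} enters: although the Hurewicz structure on $\Ch_{R}$ is not cofibrantly generated in the classical sense, the relative structures are accessibly generated in the sense required by \cite{hkrs}.

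The heart of the matter is the acyclicity condition, and here the argument is special to chain complexes: it suffices to construct a \emph{functorial path object} in the putative model structure, which then forces acyclicity by a standard retract argument. The key point is that a functorial path object in $\Ch_{R}$ --- and, after applying $-\tensor A$, in the relative structure on $(\Ch_{R})_{A}$ --- can be taken of the form $M\mapsto M\tensor D$ for a single finite free (hence dualizable) chain complex $D$ modelling the linear dual of the interval, with $M\to M\tensor D$ a chain homotopy equivalence and the evaluation $M\tensor D\to M\times M$ a degreewise-split surjection. Because $D$ is a complex over $R$, tensoring a $C$-comodule with $D$ again carries a canonical $C$-coaction, compatibly with the symmetry of $\Ch_{R}$ and requiring no flatness of $C$ (the argument is exactly that of Remark \ref{rmk:cofib-tens}); hence $M\mapsto M\tensor D$ lifts to an endofunctor of $(\Ch_{R})_{A}^{C}$ that $\UU$ carries to the relative path object.

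What remains --- and what I expect to be the main obstacle --- is to check that the evaluation $M\tensor D\to M\times M$ is a left-induced \emph{fibration} of $C$-comodules. A lift against a left-induced trivial cofibration exists at the level of underlying $A$-modules by the relative model structure, but it need not respect the coactions, so fibrancy cannot simply be transported along $\UU$; relating left-induced fibrations of comodules to relative fibrations of their underlying modules in the absence of any flatness hypothesis on $C$ is precisely the delicate step executed in \cite[Theorem 6.6.3]{hkrs}, typically by routing the argument through cofree comodules and the associated cobar resolution. A useful byproduct, exploited repeatedly below, is that this cobar resolution then supplies an explicit functorial fibrant replacement in $(\Ch_{R})_{A}^{C}$ (Theorem \ref{thm:fib-repl}).
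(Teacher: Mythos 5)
The paper does not prove this statement; it is quoted from \cite[Theorem 6.6.3]{hkrs}, so your sketch must be judged as a reconstruction of that proof. Your overall frame (left-induce along $\UU\dashv -\tensor_A C$; check local presentability, accessibility of the relative structure, and acyclicity) is right, and the first two points are handled correctly. The gap is that you have dualized Quillen's argument the wrong way round, starting with your statement of the acyclicity condition itself. For a \emph{left}-induced structure the condition is not ``every trivial cofibration is a weak equivalence'' (that is the right-induced condition); it is that every map $p$ having the right lifting property against all cofibrations is a weak equivalence. The standard device for forcing this is a \emph{cylinder} object, not a path object: one uses cofibrancy of the codomain to lift $\emptyset\to Y$ against $p$ and obtain a section $s$, then lifts the cofibration $X\sqcup X\to X\tensor I$ against $p$ itself to produce a homotopy from $sp$ to the identity. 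A path object $M\to M\tensor D\to M\times M$ is of no use here: the hypothesis on $p$ supplies lifts of cofibrations against $p$, not lifts of anything against the evaluation $M\tensor D\to M\times M$, so there is no lifting square in which to deploy it. Consequently the ``main obstacle'' you single out --- showing the evaluation is a left-induced fibration --- is not a step of any correct proof, and deferring it to \cite[Theorem 6.6.3]{hkrs} is circular in any case, since that is the statement being proved.

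With the correct dual in hand, the genuine difficulty is a different one, and your sketch does not touch it. The cylinder inclusion $X\sqcup X\to X\tensor I$ is a cofibration of comodules precisely when its underlying map of $A$-modules is a relative cofibration; its cokernel is a shift of $\UU X$, so this holds essentially only when $\UU X$ is relative cofibrant, which by Proposition \ref{prop:bmr-cofib} fails for general $A$-modules. (This is exactly where $(\Ch_R)_A^C$ differs from $\Ch_R^C$: in the Hurewicz structure on $\Ch_R$ every object is cofibrant and the cylinder argument goes through directly.) This is why the left-induction criterion of \cite{hkrs} quoted as Theorem \ref{thm:a-mod-cylinder} is phrased in terms of \emph{underlying-cofibrant replacements} together with cylinders on cofibrant objects; producing such replacements compatibly with the $C$-coaction, or otherwise verifying acyclicity in the presence of non-cofibrant comodules, is the real content of \cite[Theorem 6.6.3]{hkrs} and is absent from your proposal. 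A minor further point: your closing claim that the cobar resolution of Theorem \ref{thm:fib-repl} is a byproduct of this argument overreaches, since that statement concerns comodules over a degreewise $R$-flat coalgebra in $\Ch_R$, not over an arbitrary $A$-coring.
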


\begin{remark}\label{rmk:limits-comods} Note that if $(A,C)$ is a flat coring, e.g., if $C$ is flat-cofibrant as a left $A$-module, then limits in $(\Ch_{R})_{A}^{C}$ are in fact created in $(\Ch_{R})_{A}$ and thus in $\Ch_{R}$.
\end{remark}

In \cite{berglund-hess:morita} the authors established the existence of interesting classes of copure weak equivalences of corings and of algebra morphisms satisfying effective homotopic descent in the chain complex framework.

\begin{theorem} \label{thm:copure}\cite[Theorem 5.16]{berglund-hess:morita} Let $A$ be an algebra in $\Ch_{R}$.  If $C$ is a flat $A$-coring, and $D$ is a coaugmented flat-cofibrant $A$-coring, then every relative weak equivalence $f\colon C \to D$ of $A$-corings  is copure.
\end{theorem}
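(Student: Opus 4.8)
The plan is to verify the condition of Definition~\ref{defn:copure} directly, namely that the counit $\epsilon_{M}\colon f_{*}f^{*}(M)\to M$ of the change-of-corings adjunction is a weak equivalence---that is, a chain homotopy equivalence, since we work in the relative structure---for every fibrant right $D$-comodule $M$. Since $(A,C)$ is flat, Proposition~\ref{prop:R_X dualizable}, applied to $(\Id_{A},f)\colon(A,C)\to(A,D)$, identifies $f^{*}$ with $-\cotensor_{D}C$, where $C$ carries the left $(A,D)$-comodule structure $(f\otimes_{A}C)\Delta_{C}$. As $f$ is a morphism of corings it is in particular a morphism of left $D$-comodules $C\to D$, and a short diagram chase shows that $\epsilon_{M}$ coincides, under the canonical isomorphism $M\cotensor_{D}D\cong M$, with $M\cotensor_{D}f\colon M\cotensor_{D}C\to M\cotensor_{D}D$. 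So it suffices to prove that $M\cotensor_{D}f$ is a weak equivalence whenever $M$ is fibrant. (Equivalently, by the remark following Proposition~\ref{prop:we-QE}, one may instead show that every fibrant right $D$-comodule is homotopy coflat.)

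The main tool is the cobar construction. For a right $D$-comodule $M$, let $\mathbb{C}^{\bullet}(M)$ denote the cosimplicial object of $(\Ch_{R})_{A}^{D}$ with $\mathbb{C}^{s}(M)=M\otimes_{A}D^{\otimes_{A}(s+1)}$, the cofree cosimplicial resolution associated with the comonad $-\otimes_{A}D$, equipped with its coaugmentation $\delta_{M}\colon M\to\mathbb{C}^{0}(M)$. The two hypotheses on $D$ enter here in complementary ways. Because $D$ is coaugmented, the counit $\epsilon_{D}$ is split by the coaugmentation $\eta\colon A\to D$, and it follows that the codegeneracies of $\mathbb{C}^{\bullet}(M)$---and hence all its matching maps---are fibrations, so that $\mathbb{C}^{\bullet}(M)$ is Reedy fibrant. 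Because $D$ is flat-cofibrant, it is strongly homotopy flat (Proposition~\ref{prop:ch-special-modules}), which is precisely what is needed to keep the product totalization $\operatorname{Tot}\mathbb{C}^{\bullet}(M)$ under control and to show that the coaugmentation $M\to\operatorname{Tot}\mathbb{C}^{\bullet}(M)$ is a weak equivalence; in other words $M\mapsto\operatorname{Tot}\mathbb{C}^{\bullet}(M)$ is a fibrant replacement functor on $(\Ch_{R})_{A}^{D}$ (see \cite{hkrs}).

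Next I would cotensor $\mathbb{C}^{\bullet}(M)$ with $f$ termwise. Using that $D\cotensor_{D}E\cong E$ for a left $D$-comodule $E$ (valid since $D$ is flat), one obtains $\mathbb{C}^{s}(M)\cotensor_{D}C\cong M\otimes_{A}D^{\otimes_{A}s}\otimes_{A}C$ and $\mathbb{C}^{s}(M)\cotensor_{D}D\cong M\otimes_{A}D^{\otimes_{A}s}\otimes_{A}D$, under which $\mathbb{C}^{s}(M)\cotensor_{D}f$ becomes $M\otimes_{A}D^{\otimes_{A}s}\otimes_{A}f$. Since $D$, and with it $D^{\otimes_{A}s}$, is flat-cofibrant and hence strongly homotopy flat (Proposition~\ref{prop:ch-special-modules}, together with an argument like that of Remark~\ref{rmk:cofib-tens}), the functor $M\otimes_{A}D^{\otimes_{A}s}\otimes_{A}(-)$ preserves weak equivalences; as $f$ is a chain homotopy equivalence of underlying complexes, $M\otimes_{A}D^{\otimes_{A}s}\otimes_{A}f$ is therefore a weak equivalence for every $s$. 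Now $f^{*}=-\cotensor_{D}C$ is right Quillen (Remark~\ref{rmk:QA-coring map}), hence preserves limits and fibrations, so it sends $\mathbb{C}^{\bullet}(M)$ to a Reedy fibrant cosimplicial object and commutes with $\operatorname{Tot}$. Consequently $\mathbb{C}^{\bullet}(M)\cotensor_{D}f$ is a levelwise weak equivalence between Reedy fibrant cosimplicial objects, so $\operatorname{Tot}\bigl(\mathbb{C}^{\bullet}(M)\cotensor_{D}f\bigr)$ is a weak equivalence.

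Finally I would assemble the naturally commuting square (commutativity being naturality of the transformation $-\cotensor_{D}f$ applied to the coaugmentation of $M$) with top edge $M\cotensor_{D}C\to\operatorname{Tot}\bigl(\mathbb{C}^{\bullet}(M)\cotensor_{D}C\bigr)\cong\bigl(\operatorname{Tot}\mathbb{C}^{\bullet}(M)\bigr)\cotensor_{D}C$, bottom edge $M\to\operatorname{Tot}\mathbb{C}^{\bullet}(M)$, left edge $M\cotensor_{D}f=\epsilon_{M}$, and right edge $\operatorname{Tot}\bigl(\mathbb{C}^{\bullet}(M)\cotensor_{D}f\bigr)$. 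The bottom edge is a weak equivalence by the fibrant-replacement property; the top edge is the image under the right Quillen functor $f^{*}$ of the weak equivalence $M\to\operatorname{Tot}\mathbb{C}^{\bullet}(M)$ between the fibrant objects $M$ and $\operatorname{Tot}\mathbb{C}^{\bullet}(M)$, hence is itself a weak equivalence; the right edge is a weak equivalence by the previous paragraph. By the two-out-of-three property, $\epsilon_{M}$ is a weak equivalence, which proves that $f$ is copure. I expect the crux to be the second paragraph: showing that the cobar construction is a Reedy fibrant fibrant-replacement functor on $(\Ch_{R})_{A}^{D}$. That is exactly where the two hypotheses are used---``$D$ coaugmented'' supplies the splitting of $\epsilon_{D}$ that forces Reedy fibrancy, and ``$D$ flat-cofibrant'' is what tames the infinite product totalization and makes the coaugmentation a weak equivalence---while everything else is routine bookkeeping with cotensor products and Quillen functors.
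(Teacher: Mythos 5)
First, note that this paper does not actually prove the statement---it is imported verbatim from \cite[Theorem 5.16]{berglund-hess:morita}---so there is no in-text proof to compare yours against line by line. Judged on its own terms, your argument has a genuine gap, and it sits exactly where you declare things to be ``routine bookkeeping'': the levelwise claim in your third paragraph. After cotensoring the cofree resolution with $f$, the map in cosimplicial degree $s$ is $(M\otimes_{A}D^{\otimes_{A}s})\otimes_{A}f$, i.e.\ the fixed \emph{right} $A$-module $N_{s}=M\otimes_{A}D^{\otimes_{A}s}$ tensored against the morphism $f\colon C\to D$ of \emph{left} $A$-modules. The property you invoke---(strong) homotopy flatness of $D^{\otimes_{A}s}$ as a left $A$-module, from Proposition \ref{prop:ch-special-modules}---controls the functor $-\otimes_{A}D^{\otimes_{A}s}$ on right $A$-modules; it says nothing about whether $N_{s}\otimes_{A}-$ carries the weak equivalence $f$ to a weak equivalence. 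For that you would need $N_{s}$ to be homotopy flat, or cofibrant so that purity (Proposition \ref{prop:pure we alt}) applies, \emph{as a right $A$-module}. Nothing in the hypotheses supplies this: $M$ is only assumed fibrant as a $D$-comodule, and flat-cofibrancy of $D$ is a condition on its \emph{left} $A$-module structure, so the right $A$-module structure of $N_{s}$ is completely uncontrolled. Tensoring over $A$ does not preserve chain homotopy equivalences of underlying complexes in general (for $A=k[x]/x^{2}$ concentrated in degree $0$, the complex $\cdots\to A\xrightarrow{x}A\xrightarrow{x}A\to\cdots$ is contractible over $k$ but becomes non-acyclic after applying $k\otimes_{A}-$), so this is not a cosmetic omission.

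The problem is starkest at $s=0$, where the required statement is that $M\otimes_{A}f\colon M\otimes_{A}C\to M\otimes_{A}D$ is a weak equivalence for every fibrant $D$-comodule $M$. That is \emph{stronger} than the theorem, which only asserts that the restriction of this map to the equalizer $M\cotensor_{D}C\subseteq M\otimes_{A}C$ is a weak equivalence. Your resolution therefore does not reduce the problem; it replaces it with a harder one, discarding exactly the equalizer structure that the hypotheses on $C$ and $D$ are designed to exploit. To repair the argument you would either have to prove that the underlying right $A$-module of a fibrant $D$-comodule is homotopy flat---a nontrivial claim requiring the tower-of-pullbacks description of fibrant comodules, in the style of the \cite[Theorem 5.15]{berglund-hess:morita} inductions used in the proofs of Propositions \ref{prop:om(a,h,h)} and \ref{prop:hyp1}, and with the Homotopy Five Lemma (Lemma \ref{lem:h5}) doing the inductive step---or find an argument that works directly with $M\cotensor_{D}f$ rather than with the ambient tensor products. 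The remainder of your setup is fine: the identification of $\epsilon_{M}$ with $M\cotensor_{D}f$ via Proposition \ref{prop:R_X dualizable}, the Reedy fibrancy of the cofree resolution coming from the coaugmentation, and the Ken Brown argument for the top edge of your comparison square are all correct.
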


\begin{theorem} \label{thm:homotopic descent for chains}\cite[Theorem 5.17]{berglund-hess:morita} Let $\vp:A\to B$ be algebras in $\Ch_{R}$. If as a left $A$-module $B$ is flat-cofibrant and contains $A$ as a retract, then $\vp$ satisfies effective homotopic descent.
\end{theorem}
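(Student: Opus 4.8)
The plan is to deduce the statement directly from Proposition~\ref{prop:descent}, which asserts that an algebra morphism $\vp\colon A\to B$ satisfies effective homotopic descent (in the sense of Definition~\ref{defn:homotopic descent}) as soon as $B$ is homotopy faithfully flat as a left $A$-module. Both that proposition and the present statement are set inside a framework satisfying Hypothesis~\ref{conv:m-t} --- here $\Ch_{R}$ with the Hurewicz structure, the relative structures on the module categories, and the left-induced structures on the comodule categories supplied by Theorem~\ref{thm:hkrs} --- so the entire task reduces to checking that the two concrete hypotheses in the statement, namely that $B$ is flat-cofibrant as a left $A$-module and contains $A$ as a retract, together force $B$ to be homotopy faithfully flat over $A$.

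This is precisely where Proposition~\ref{prop:ch-special-modules} enters. First, since $B$ is flat-cofibrant in the relative structure on ${}_{A}(\Ch_{R})$, part~(2) of that proposition gives that $B$ is flat, hence strongly homotopy flat. Second, since $B$ contains $A$ as a retract, it contains $A$ as a summand as a left $A$-module, so part~(3) of the proposition gives that $B$ is homotopy faithful. By Definition~\ref{defn:special-modules}, a left $A$-module that is simultaneously strongly homotopy flat and homotopy faithful is homotopy faithfully flat; thus $B$ is homotopy faithfully flat over $A$. Applying Proposition~\ref{prop:descent} to this $B$ then yields the conclusion.

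I do not expect any genuine obstacle here: the content is already packaged inside the two quoted results from \cite{berglund-hess:morita}, and this theorem is simply their combination --- Proposition~\ref{prop:descent} supplies the abstract criterion, and Proposition~\ref{prop:ch-special-modules} translates it into explicit, checkable chain-level hypotheses. The only points requiring a line of care are bookkeeping ones: identifying \emph{retract} with \emph{summand} for left $A$-modules, and confirming that the ambient model structures of Section~\ref{sec:ch cx} meet the standing hypotheses of Proposition~\ref{prop:descent} --- both of which are immediate from the set-up recalled at the start of this section (in particular Proposition~\ref{prop:ch-special-modules}(1) and Theorem~\ref{thm:hkrs}).
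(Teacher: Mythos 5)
Your proof is correct and is essentially the argument the paper intends: the theorem is quoted from \cite[Theorem 5.17]{berglund-hess:morita}, and the two ingredients you combine --- Proposition \ref{prop:descent} (homotopy faithful flatness implies effective homotopic descent) and Proposition \ref{prop:ch-special-modules}(2),(3) (flat-cofibrant implies strongly homotopy flat; containing $A$ as a summand implies homotopy faithful) --- are exactly the results the paper recalls for this purpose. The bookkeeping points you flag (retract $=$ summand for $A$-modules, and the model structures of Section \ref{sec:ch cx} satisfying Hypothesis \ref{conv:m-t}) are handled correctly.
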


Finally, putting all of the pieces together, we can describe a class of morphisms of corings that induce Quillen equivalences between the corresponding comodule categories.

\begin{theorem} \label{thm:we=QE for dg-corings}\cite[Theorem 5.18]{berglund-hess:morita}
Let $(\varphi,f)\colon (A,C)\rightarrow (B,D)$ be a morphism of flat corings in $\Ch_{R}$ such that, as a left $A$-module, $B$ is flat-cofibrant and contains $A$ as a retract and such that $D$ is coaugmented and flat-cofibrant as a left $B$-module.

The adjunction governed by $(\varphi,f)$,
$$\bigadjunction{(\Ch_{R})_A^C}{(\Ch_{R})_B^D}{(\vp, f)_{*}}{(\vp, f)^{*}},$$
is a Quillen equivalence if and only if the morphism of $B$-corings $f\colon B_*(C) \rightarrow D$ is a relative weak equivalence.
\end{theorem}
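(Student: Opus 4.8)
The strategy is to analyze the adjunction governed by $(\varphi,f)$ through its canonical factorization (Remark \ref{rmk:factor2}),
$$(\Ch_R)_A^C\ \xrightarrow{\ \Can_\varphi\ }\ (\Ch_R)_B^{B_*(C)}\ \xrightarrow{\ f_*\ }\ (\Ch_R)_B^D,$$
where $B_*(C)=\varphi_*(C)$ has underlying $B$-bimodule $B\otimes_A C\otimes_A B$, and to treat the two implications separately. First I would record some preliminary facts. The intermediate category $(\Ch_R)_B^{B_*(C)}$ carries the left-induced relative model structure by Theorem \ref{thm:hkrs}, and both halves of the factorization are Quillen adjunctions: $f_*\dashv f^*$ is a change-of-corings adjunction, hence automatically Quillen, while $\Can_\varphi\dashv\Prim_\varphi$ is Quillen by Remark \ref{rmk:QA-coring map}, since restriction of scalars along $\varphi$ leaves the underlying chain complex unchanged and therefore preserves relative fibrations (degreewise split surjections). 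Moreover, because $B$ is flat-cofibrant and contains $A$ as a retract as a left $A$-module, $B$ is homotopy faithfully flat over $A$ by Proposition \ref{prop:ch-special-modules}(2),(3), and $\varphi$ satisfies effective homotopic descent with respect to $C$ by Theorem \ref{thm:homotopic descent for chains}; hence $\Can_\varphi\dashv\Prim_\varphi$ is in fact a Quillen \emph{equivalence}.

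For the ``if'' direction, assume $f\colon B_*(C)\to D$ is a relative weak equivalence. The key point is that the intermediate coring $B_*(C)$ is flat: for a right $B$-module $M$ there is a natural isomorphism $M\otimes_B B_*(C)\cong \varphi^*(M)\otimes_A C\otimes_A B$, and the functor on the right preserves coreflexive equalizers because $\varphi^*$ does (it is a right adjoint), $-\otimes_A C$ does by flatness of $(A,C)$, and $-\otimes_A B$ does since $B$ is flat as a left $A$-module (Proposition \ref{prop:ch-special-modules}(2)). Since $D$ is coaugmented and flat-cofibrant as a left $B$-module by hypothesis, Theorem \ref{thm:copure} (applied over the algebra $B$, with $B_*(C)$ in the role of the flat coring) shows that $f$ is a copure weak equivalence. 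Combined with the homotopy faithful flatness of $B$ over $A$, Corollary \ref{cor:QE conditions} then yields that $(\varphi,f)_*\dashv(\varphi,f)^*$ is a Quillen equivalence.

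For the ``only if'' direction, assume $(\varphi,f)_*\dashv(\varphi,f)^*$ is a Quillen equivalence. Since it factors as $\Can_\varphi\dashv\Prim_\varphi$ followed by $f_*\dashv f^*$, with the first half a Quillen equivalence as noted above, the two-out-of-three property for Quillen equivalences applied along the left adjoints $\Can_\varphi$ and $f_*$ forces $f_*\dashv f^*$ to be a Quillen equivalence. By Proposition \ref{prop:we-QE} this means that $f$ is a copure weak equivalence. Finally, every object of $\Ch_R$ is fibrant in the Hurewicz structure, since $X\to 0$ is always a degreewise split surjection; in particular $B$ is fibrant, so by the remark following Proposition \ref{prop:we-QE} the copure weak equivalence $f$ is a relative weak equivalence.

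The routine parts are the identification of the underlying bimodule, coaction, and comultiplication of $B_*(C)$ and the bookkeeping showing that the displayed factorization refines to one of Quillen adjunctions; these are exactly the general statements imported from \cite{berglund-hess:morita}. The one step demanding genuine attention — and the main obstacle — is propagating the flatness and cofibrancy hypotheses correctly: one must check that flatness of $(A,C)$ together with $B$ being flat-cofibrant over $A$ forces flatness of the intermediate coring $B_*(C)$, so that Theorem \ref{thm:copure} is applicable, and one must confirm that the coaugmentation and flat-cofibrancy hypothesis on $D$ over $B$ matches precisely the hypothesis of that theorem. Everything else is assembly of the cited results along the factorization.
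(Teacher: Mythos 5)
Your proof is correct and takes essentially the approach that the cited source intends: factor the adjunction through $(B,\varphi_*(C))$, obtain a Quillen equivalence on the descent leg from Theorem \ref{thm:homotopic descent for chains} (using that $B$ is flat-cofibrant over $A$ with $A$ as a retract), and identify the change-of-corings leg being a Quillen equivalence with copurity of $f$ via Proposition \ref{prop:we-QE} and Theorem \ref{thm:copure}, using fibrancy of $B$ in the Hurewicz structure to pass between copure and relative weak equivalences. The verification that the intermediate coring $\varphi_*(C)$ is flat is exactly the point that needs checking, and your argument for it is sound.
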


As explained in Remark 5.20 in \cite{berglund-hess:morita}, the flat-cofibrancy hypothesis on $\vp$ in the theorem above is not too restrictive, since $\vp$ can always be replaced up to weak equivalence by a morphism that satisfies it.

\subsection{Homotopy theory of chain comodule algebras}
Before discussing homotopic Hopf-Galois extensions in $\Ch_{R}$, it remains to obtain a model category structure on $\Alg_R^H$, the category of the $H$-comodule algebras in $\Ch_{R}$, for any bialgebra $H$.  The first step towards such a theorem is provided by the following result, a special case of the model category structure on categories of comodules over corings. 

\begin{proposition}\label{prop:comodule}\cite[Corollary 6.3.7]{hkrs} Let $R$ be any commutative ring and $C$ any coalgebra in $\Ch_{R}$.
There exists a  model category structure on the category $\Ch_R^C$ of right $C$-comodules that is left-induced  along the forgetful functor $\Ch_R^C \to \Ch_{R}$, with respect to the Hurewicz model structure on $\Ch_{R}$.
\end{proposition}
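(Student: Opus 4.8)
The plan is to obtain this as the special case $A=R$ of Theorem~\ref{thm:hkrs}. First I would record the identifications that make the specialization legitimate. An $R$-coring is a comonoid in $({}_{R}(\Ch_{R})_{R},\tensor_{R},R)$; since $R$ is commutative, ${}_{R}(\Ch_{R})_{R}=\Ch_{R}$ with $\tensor_{R}=\tensor$ and monoidal unit $R$, so an $R$-coring is precisely a coalgebra $C$ in $\Ch_{R}$, and a right $(R,C)$-comodule is precisely a right $C$-comodule in $\Ch_{R}$; thus $(\Ch_{R})_{R}^{C}=\Ch_{R}^{C}$. Finally, the relative model structure on $(\Ch_{R})_{R}$ is right-induced from the Hurewicz structure along the adjunction $-\tensor R\dashv\UU$, which is isomorphic to the identity adjunction on $\Ch_{R}$; hence the relative model structure on $(\Ch_{R})_{R}$ \emph{is} the Hurewicz structure. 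With these identifications, Theorem~\ref{thm:hkrs} applied with $A=R$ states exactly that $\Ch_{R}^{C}$ admits the model structure left-induced from the Hurewicz structure along $\UU\colon\Ch_{R}^{C}\to\Ch_{R}$, which is the assertion.

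It is worth indicating the direct argument as well, since it isolates where the real content sits. The forgetful functor $\UU\colon\Ch_{R}^{C}\to\Ch_{R}$ is a \emph{left} adjoint, its right adjoint being the cofree comodule functor $-\tensor C$ (with coaction $N\tensor\Delta$ on $N\tensor C$); concretely, $\Ch_{R}^{C}$ is comonadic over $\Ch_{R}$ for the comonad $-\tensor C$. This comonad preserves all colimits, being a left adjoint, hence is accessible, so $\Ch_{R}^{C}$ is locally presentable and in particular complete and cocomplete. One is therefore in position to apply the general left-induction theorem (cf.~\cite{hkrs}) to the adjunction $\UU\dashv(-\tensor C)$: the model structure on $\Ch_{R}^{C}$ with cofibrations and weak equivalences created by $\UU$ exists provided the \emph{acyclicity condition} holds, namely that every morphism of $\Ch_{R}^{C}$ having the right lifting property against $\UU^{-1}(\text{Hurewicz cofibrations})$ is sent by $\UU$ to a chain homotopy equivalence.

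The step I expect to be the main obstacle is precisely this acyclicity condition; everything preceding it is formal. The way to settle it, and the route taken in \cite{hkrs}, is to produce an explicit functorial fibrant replacement in $\Ch_{R}^{C}$, namely the totalization of the cosimplicial cobar resolution $\Omega^{\bullet}(M)$ of a comodule $M$ (the non-multiplicative counterpart of the construction that reappears in Theorem~\ref{thm:fib-repl}). Two features of the Hurewicz structure on $\Ch_{R}$ make this work: every object of $\Ch_{R}$ is both fibrant and cofibrant, so the cofree comodules making up the cobar tower are fibrant in $\Ch_{R}^{C}$ and the totalization is again fibrant; and the counit $\epsilon\colon C\to R$ supplies a cosimplicial extra codegeneracy, so that the augmentation $M\to\operatorname{Tot}\Omega^{\bullet}(M)$ is a chain homotopy equivalence, hence a weak equivalence in $\Ch_{R}^{C}$. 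Given such a functorial fibrant replacement, the acyclicity condition follows by the usual lifting argument against $\UU^{-1}(\text{Hurewicz cofibrations})$, and the left-induced model structure exists. Since all of this is already subsumed by Theorem~\ref{thm:hkrs}, in practice one simply invokes that theorem with $A=R$; I sketch the direct route only to make visible that the sole substantive ingredient is the good behaviour of the cobar totalization.
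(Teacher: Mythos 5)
Your proposal is correct and coincides with the paper's own treatment: the paper gives no proof, simply citing \cite{hkrs} and framing the statement as exactly the special case $A=R$ of Theorem~\ref{thm:hkrs}, which is the identification you carry out. Your supplementary sketch of the direct argument (local presentability, the acyclicity condition, and the cobar resolution as functorial fibrant replacement) accurately reflects where the substantive content lies in \cite{hkrs}, but is not needed here.
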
  

In particular, if $H$ is a bialgebra in $\Ch_{R}$, one can forget its multiplicative structure and apply the proposition above to obtain a model category structure on $\Ch_{R}^{H}$, the category of comodules over the coalgebra underlying $H$.

\begin{theorem}\label{thm:comod-alg}\cite[Theorem 6.5.1]{hkrs} Let $R$ be any commutative ring and $H$ any bialgebra in $\Ch_{R}$.There exists  a right-induced model structure on $\Alg_R^H$, created by the forgetful functor $\Alg_R^H \to \Ch_{R}^{H}$, with respect to the model structure on $\Ch_R^H$ of Proposition \ref{prop:comodule}. 
\end{theorem}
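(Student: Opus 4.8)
The plan is to recognise $\Alg_R^H$ as the category of monoids in the monoidal model category $\Ch_R^H$ of Proposition~\ref{prop:comodule} and to obtain the asserted structure by right-induction along the free--forgetful adjunction. The starting observation is that, since $H$ is a bialgebra, the tensor product $\tensor_R$ lifts to a monoidal structure on $\Ch_R^H$: the coaction on $M\tensor_R N$ is assembled from $\delta_M$, $\delta_N$ and the multiplication of $H$, the monoidal unit is the trivial comodule $R$, and a monoid for this structure is precisely an $H$-comodule algebra. Thus $\Alg_R^H$ is the category of monoids in $\Ch_R^H$, and the forgetful functor $U\colon \Alg_R^H\to\Ch_R^H$ has a left adjoint $T$, the tensor-algebra functor. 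Since $\Ch_R^H$ is locally presentable and $\tensor_R$ preserves colimits in each variable (the forgetful functor $\Ch_R^H\to\Ch_R$ is strong monoidal and creates colimits), the category $\Alg_R^H$ of monoids is again locally presentable, hence in particular bicomplete.

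Next I would transport the necessary homotopical input along the strong monoidal forgetful functor $\Ch_R^H\to\Ch_R$, which creates colimits, cofibrations and weak equivalences for the model structure of Proposition~\ref{prop:comodule}; recall that its trivial cofibrations are therefore exactly the maps whose underlying map in $\Ch_R$ is both a degreewise-split monomorphism and a chain homotopy equivalence. From the corresponding properties of the Hurewicz structure on $\Ch_R$ one reads off: the model structure on $\Ch_R^H$ is accessible, being left-induced from the accessible Hurewicz structure; $\Ch_R^H$ is a monoidal model category, the pushout--product axiom being created by the forgetful functor; every object of $\Ch_R^H$ is cofibrant, since $0\to M$ has degreewise-split-monic underlying map; and, crucially, the monoid axiom holds in the strong form that $j\tensor_R N$ is again a trivial cofibration for every trivial cofibration $j$ and every object $N$, inherited from the fact that tensoring a degreewise-split-monic chain homotopy equivalence with an arbitrary complex over $R$ yields another map of the same kind.

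It then remains to invoke the necessary and sufficient condition for the existence of a right-induced model structure: as $\Alg_R^H$ is locally presentable and $\Ch_R^H$ is an accessible model category, the model structure on $\Alg_R^H$ with fibrations and weak equivalences created by $U$ exists as soon as the acyclicity condition holds, i.e.\ as soon as $U$ sends every $T(\mathcal J)$-cell complex to a weak equivalence, where $\mathcal J$ denotes a generating family of trivial cofibrations of $\Ch_R^H$. This is the step I expect to be the main obstacle, and I would dispose of it via the standard Schwede--Shipley pushout filtration: for a trivial cofibration $j\colon X\to Y$ and a map $T(X)\to B$ of monoids, the monoid $B\amalg_{T(X)}T(Y)$ carries a filtration $B=P_0\to P_1\to\cdots$ by word length whose successive maps $P_n\to P_{n+1}$ are pushouts of maps obtained by tensoring the iterated pushout--product $j^{\square n}$ with $B$ over $R$; by the strong form of the monoid axiom established above each such map is a trivial cofibration, and trivial cofibrations of $\Ch_R^H$ are closed under coproduct, pushout and transfinite composition, so $B\to B\amalg_{T(X)}T(Y)$, and hence every relative $T(\mathcal J)$-cell complex, is a trivial cofibration and in particular $U$ sends it to a weak equivalence. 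This verifies the acyclicity condition and produces the right-induced model structure on $\Alg_R^H$, whose generating (trivial) cofibrations are the images under $T$ of the (accessible) generating (trivial) cofibrations of $\Ch_R^H$.
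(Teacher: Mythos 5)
This statement is quoted from \cite[Theorem 6.5.1]{hkrs}; the paper gives no proof of its own, so there is nothing internal to compare against. Your argument is correct and is the standard route: identify $\Alg_R^H$ with monoids in the monoidal category $\Ch_R^H$ (the tensor product of comodules using the multiplication of $H$), observe that trivial cofibrations in the left-induced structure are exactly the maps whose underlying chain map is a degreewise-split injection and a chain homotopy equivalence, deduce the strong form of the monoid axiom because both properties are preserved by tensoring with an arbitrary complex, and then feed this into the Schwede--Shipley word-length filtration to verify acyclicity. Two small points deserve care. First, the Hurewicz structure on $\Ch_R$ (hence the left-induced structure on $\Ch_R^H$) is accessible but \emph{not} cofibrantly generated by a set, so ``generating family of trivial cofibrations'' and ``$T(\mathcal J)$-cell complex'' must be read in the sense of Garner's algebraic small object argument; the closure properties you invoke (pushout, transfinite composition, retract) still hold for the left class of an accessible weak factorization system, so the argument survives, but the phrasing suggests ordinary cofibrant generation, which fails here. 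Second, for a non-cocommutative $H$ the monoidal structure on $\Ch_R^H$ is not symmetric, and the filtration uses pushout-products interleaved with copies of $B$ on both sides, so you need the two-sided statement that both $j\otimes_R N$ and $N\otimes_R j$ are trivial cofibrations; this follows by the same reasoning, but should be said. For comparison, the cited source disposes of acyclicity by the Quillen path-object argument instead: every object of $\Ch_R^H$ is fibrant (fibrations are created in $\Ch_R$, where everything is fibrant), and $\Hom(I,-)$ for the interval complex $I$ gives a functorial multiplicative path object, which yields the right-induced structure on monoids without any filtration; your route buys the extra information that the generating (trivial) cofibrations of $\Alg_R^H$ are free on those of $\Ch_R^H$, at the cost of the bookkeeping above.
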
 

It is important for our study of homotopic Hopf-Galois extensions to know that for any bialgebra $H$ in $\Ch_{R}$, the two-sided cobar construction provides a canonical fibrant replacement functor
$$\Om(-;H;H)\colon \Alg_R^H\to  \Alg_R^H: A\mapsto \Om (A;H;H),$$
and a natural relative weak equivalence of $H$-comodule algebras  $\iota_{A}\colon   A \to \Om(A;H;H)$. We establish this result as follows.

We recall first the well known definition of the cobar construction for comodules over a coaugmented, differential graded coalgebra.  

\begin{notation} Let $T$ denote the free tensor algebra functor, which to any graded $R$-module $V$ associates the graded $R$-algebra $TV=\kk \oplus \bigoplus _{n\geq 1} V^{\otimes n}$, the homogeneous elements of which are denoted $v_{1}|\cdots |v_{n}$.

For any graded $R$-module $V$, we let $\si V$ denote the graded $R$-module with $\si V_{n}\cong V_{n+1}$ for all $n$, where the element of $\si V_{n}$ corresponding to $v\in V_{n+1}$ is denoted $\si v$.

Let  $(C,\Delta, \ve, \eta)$ be a coaugmented coalgebra in $\Ch_{R}$, with coaugmentation coideal $\overline C=\operatorname{coker} (\eta\colon   R \to C)$.  We use the Einstein summation convention and  write $\Delta (c)= c_{i}\otimes c^{i}$ for all $c\in C$ and similarly for the map induced by $\Delta$ on  $\overline C$.  If $(M, \rho)$ is a right $C$-comodule, then we apply the same convention again and write $\rho(x)=x_{i}\otimes c^{i}$ for all $x\in M$, and similarly for a left $C$-comodule.
\end{notation}

\begin{definition} Let  $(C,\Delta, \ve, \eta)$ be a coaugmented coalgebra in $\Ch_{R}$, with coaugmentation coideal $\overline C=\operatorname{coker} (\eta\colon   \kk \to C)$.  For any right $C$-comodule $(M,\rho)$  and left $C$-comodule $(N, \lambda)$, let $\Om(M;C;N)$ denote the object in $\Ch_{R}$
$$(M\otimes T (s^{-1}  \overline C) \otimes C, d_{\Om}),$$
where
\begin{align*}
d_{\Om}(x \otimes \si c_{1}|\cdots |\si c_{n}\otimes y)= &\,dx \otimes \si c_{1}|\cdots |\si c_{n}\otimes y\\
&+x\otimes \sum _{j=1}^{n}\pm\si c_{1}|\cdots |\si dc_{j}|\cdots \si c_{n}\otimes y\\
&\pm x \otimes \si c_{1}|\cdots |\si c_{n}\otimes dy\\
&\pm x_{i}\otimes \si c^{i}| \si c_{1}|\cdots |\si c_{n}\otimes y\\
&+x\otimes \sum _{j=1}^{n}\pm\si c_{1}|\cdots |\si c_{j,i}|\si c_{j}^{{i}}|\cdots \si c_{n}\otimes y\\
&\pm x\otimes \si c_{1}|\cdots |\si c_{n}|\si c_{i}\otimes y^{i}
\end{align*}
where all signs are determined by the Koszul rule, the differentials of $M$, $N$, and $C$ are all denoted $d$, and $\si 1=0$ by convention. 

 If  $N=C$, then $\Om (M;C;C)$ admits a right $C$-comodule structure induced from the rightmost copy of $C$.  
\end{definition}

\begin{remark}\label{rmk:coinv} The cobar construction $\Om (M;C;C)$ is a ``cofree resolution'' of $M$, in the sense that the coaction map $\rho\colon M\to M\otimes C$ factors in $\Ch_{R}^{C}$ as
\begin{equation}\label{eqn:factor-coaction}\xymatrix{ M\ar [dr]_{\widetilde \rho}\ar [rr]^{\rho}&&M\otimes C,\\ 
&\Om(M;C;C)\ar [ur]_{q}}
\end{equation}
where $\widetilde \rho (x)=x_{i}\otimes 1\otimes c^{{i}}$  and $q(x\otimes 1\otimes c)=x\otimes c$, while $q(x \otimes \si c_{1}|\cdots |\si c_{n}\otimes c)=0$ for all $n\geq 1$.   It is well known that the composite
$$\Om (M;C;C) \xrightarrow q M\otimes C \xrightarrow {M\otimes \ve} M$$
is a chain homotopy equivalence, by a standard ``extra degeneracy'' argument. It follows that $\tilde \rho\colon  M\to \Om (M;C;C)$ is always a relative weak equivalence of right $C$-comodules. Moreover, $\Om (M;C;C)^{\mathrm{co} C}\cong \Om(M;C;R)$.  
\end{remark}

When $C$ is replaced by a bialgebra and $M$  by  a comodule algebra, then the cobar construction admits a compatible multiplicative structure, building on the following result from \cite{hess-levi}.

\begin{lemma} \cite[Corollary 3.6]{hess-levi}  If $H$ is a bialgebra  in $\Ch_{R}$, the cobar construction lifts to a functor 
$$\Om(-;H;R)\colon  \Alg_{R}^{H}\to \Alg_{R}$$
where  for every $H$-comodule $A$
\begin{align*}(a\otimes 1)(a'\otimes 1)&=aa'\otimes 1, \quad\forall\; a,a'\in A;\\
(a \otimes w)(1 \otimes w')&=a\otimes ww', \quad\forall\; a\in A, w,w'\in \Om H;\\
(1\otimes \si h)(a\otimes 1)&= (-1)^{(\deg h+1)\deg a_{i}}a_{i}\otimes \si (h a^{i}), \quad\forall\; a\in A, h\in H.
\end{align*}
\end{lemma}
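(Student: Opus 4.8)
The statement is \cite[Corollary~3.6]{hess-levi}, and the plan is to recover it by a direct verification organized around an explicit closed formula for the product. The underlying graded $R$-module of $\Om(A;H;R)$ is $A\otimes\Om H$, where $\Om H=T(\si\overline H)$ is the ordinary cobar construction of the coaugmented coalgebra underlying $H$ (forgetting its multiplication); recall that $\Om H$ is itself a differential graded algebra under the concatenation product, with $d_{\Om H}$ a derivation by the standard check. First I would \emph{define} a bilinear product on $A\otimes\Om H$ by
\[
(a\otimes \si h_1|\cdots|\si h_m)(a'\otimes \si h_1'|\cdots|\si h_n')=\pm\, a\,a'_{(0)}\otimes \si(\overline{h_1 a'_{(1)}})\big|\cdots\big|\si(\overline{h_m a'_{(m)}})\big|\si h_1'|\cdots|\si h_n',
\]
where $\rho^{(m)}(a')=a'_{(0)}\otimes a'_{(1)}\otimes\cdots\otimes a'_{(m)}$ is the $m$-fold iterated $H$-coaction on $A$ (well defined because $\rho$ is coassociative), $\overline{(-)}\colon H\to\overline H$ is the projection, and the sign is the one prescribed by the Koszul rule for the transposition that moves $\si h_1|\cdots|\si h_m$ past $a'$. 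This formula manifestly specializes to the three stated identities — in particular the sign $(-1)^{(\deg h+1)\deg a_i}$ in the third identity is exactly the Koszul sign for transposing $\si h$ past $a_i$, since $\deg(\si h)=\deg h-1\equiv\deg h+1\pmod 2$ — so conversely the three identities together with associativity and unitality pin the product down uniquely.

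With the product in hand, the verifications I would carry out, in order, are: (a) \emph{associativity}; (b) \emph{unitality}, with unit $1_A\otimes 1$; (c) the \emph{Leibniz rule}, i.e.\ that the cobar differential $d_\Om$ defined above is a derivation for this product; and (d) \emph{functoriality} in $A$. Step~(a) reduces, after expanding both triple products via the closed formula and cancelling Koszul signs (which agree on the two sides, being the signs of the same total permutation), to two facts: that $\rho\colon A\to A\otimes H$ is a \emph{morphism of algebras} — whence $\rho^{(k)}(a'a'')=\rho^{(k)}(a')\rho^{(k)}(a'')$ in $A\otimes H^{\otimes k}$, which handles moving a product of two $A$-factors past a word — and that $H$ is associative while $\rho$ is coassociative, which handles moving an $A$-factor past a concatenation $w|w'$ of two words. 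Step~(b) is immediate from $\rho(1_A)=1_A\otimes 1_H$ and $\overline{1_H}=0$ (so that passing $1_A$ through a word does nothing) together with $\rho^{(0)}=\mathrm{id}$. Step~(d) is clear: a morphism $f\colon A\to A'$ of $H$-comodule algebras induces $\Om(f;H;R)=f\otimes\Om H$, which is a chain map and, since $f$ intertwines the coactions and the multiplications, is multiplicative for the formula above.

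The hard part is step~(c), the Leibniz rule, and this is where all the sign- and index-bookkeeping lives. It suffices to check the identity on products of generators of the two subalgebras $A\otimes 1$ and $1\otimes\Om H$. On products within $1\otimes\Om H$ it is exactly the statement that $\Om H$ is a DGA, and on products within $A\otimes 1$ it is exactly that $d_A$ is a derivation; both are already known. The only genuine case is the mixed product $(1\otimes\si h)(a\otimes 1)=\pm a_{(0)}\otimes\si(\overline{ha_{(1)}})$, for which one expands $d_\Om$ of both sides using the four terms of the cobar differential that survive when the right-hand coefficient is $R$ — the internal differential on $A$, the internal differential on the cobar letters, the term that uses the $A$-coaction to produce a new leading letter, and the term that splits a letter via $\Delta_H$ — and matches them. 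The matching uses Leibniz for $d_A$, Leibniz for $d_H$, the chain-map property and coassociativity of $\rho$, and coassociativity of $\Delta_H$; crucially, it is again precisely the Koszul sign built into the third identity that makes the bookkeeping close up, in particular forcing the two pieces of $d_\Om(a\otimes 1)=da\otimes 1\pm a_{(0)}\otimes\si\overline{a_{(1)}}$ to recombine correctly with the pieces of $d_\Om(1\otimes\si h)\cdot(a\otimes 1)$. This fastidious sign- and index-tracking is the core of the argument; everything else is formal.
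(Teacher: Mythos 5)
The paper offers no proof of this lemma: it is quoted verbatim from \cite[Corollary 3.6]{hess-levi}, so there is nothing internal to compare against. Your direct verification --- write down the closed formula for the product of two arbitrary elements $a\otimes \si h_{1}|\cdots|\si h_{m}$ and $a'\otimes \si h_{1}'|\cdots|\si h_{n}'$, observe that it is forced by the three displayed identities together with associativity and unitality, and then check associativity (reducing to multiplicativity and coassociativity of $\rho$ and associativity of $H$), unitality, the Leibniz rule, and functoriality --- is the standard and correct way to establish the result, and your identification of the mixed product $(1\otimes\si h)(a\otimes 1)$ as the only genuinely new Leibniz computation is right, as is the sign analysis $\deg(\si h)\equiv \deg h+1 \pmod 2$.

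Two of your reductions are stated too optimistically, though neither is fatal. First, Leibniz on products inside $A\otimes 1$ is \emph{not} just the derivation property of $d_{A}$: since $d_{\Om}(a\otimes 1)=da\otimes 1\pm a_{(0)}\otimes\si\overline{a_{(1)}}$ leaves the subalgebra $A\otimes 1$, this case also uses multiplicativity of $\rho$ together with the counit identity $a_{(0)}\ve(a_{(1)})=a$, which is what reassembles $\overline{(aa')_{(1)}}$ from the two pieces $\overline{\overline{a_{(1)}}\,a'_{(1)}}$ and $\ve(a_{(1)})\overline{a'_{(1)}}$ produced on the other side. Second, in the mixed case your list of ingredients omits the one axiom that makes the construction work for bialgebras rather than for arbitrary algebra-and-coalgebra structures on $H$: the \emph{multiplicativity} of $\Delta_{H}$ is needed to expand the letter-splitting term of $d_{\Om}\bigl(a_{(0)}\otimes\si(ha_{(1)})\bigr)$ as $\Delta(h a_{(1)})=\Delta(h)\Delta(a_{(1)})$. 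After writing $\Delta(h)=h\otimes 1+1\otimes h+\overline{\Delta}(h)$, the $1\otimes h$ piece does not match anything on the right-hand side of the Leibniz identity; it \emph{cancels} the coaction term $\pm\,a_{(0)}\otimes\si\overline{a_{(1)}}\,|\,\si(\overline{ha_{(2)}})$ arising on the \emph{same} side, while the $h\otimes 1$ piece and the reduced piece match the terms $(1\otimes\si h)\bigl(a_{(0)}\otimes\si\overline{a_{(1)}}\bigr)$ and $\bigl(1\otimes\si h_{(1)}|\si h_{(2)}\bigr)(a\otimes 1)$ respectively. With these two points made explicit the argument closes up.
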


An analogous formula holds for the multiplicative structure on $\Om(R;H;A)$, if $A$ is a left $H$-comodule algebra.

As Karpova showed in \cite{karpova}, the multiplication defined above extends to $\Om (A;H;H)$.

\begin{lemma}\cite[Section 2.1.3]{karpova}\label{lem:factor-alg} If $H$ is a bialgebra  in $\Ch_{R}$, the cobar construction lifts to a functor 
$$\Om(-;H;H)\colon  \Alg_R^{H}\to \Alg_R^{H}$$
such that for any $H$-comodule algebra $(A,\rho)$,  the coaction map $\rho\colon   A\to A\otimes H$ factors in $\Alg_R^{H}$ as
\begin{equation}\label{eqn:factor-coaction2}\xymatrix{ A\ar [dr]_{\widetilde \rho}\ar [rr]^{\rho}&&A\otimes H,\\ 
			&\Om(A;H;H)\ar [ur]_{q}}
			\end{equation}
where $\widetilde \rho (a)=a_{i}\otimes 1\otimes h^{{i}}$  and $q(a\otimes 1\otimes h)=a\otimes h$, while $q(a \otimes \si h_{1}|\cdots |\si h_{n}\otimes h)=0$ for all $n\geq 1$.   The multiplication on $\Om (A;H;H)$ is determined by the multiplication in $\Om (A;H;R)$ and $\Om(R;H;A)$, together with the formulas
\begin{align*}
(1 \otimes s^{-1}h_1 \otimes h')(a \otimes s^{-1}h_2 \otimes 1)&= \big( (1 \otimes s^{-1}h_1)(a \otimes 1) \otimes 1\big)\big( 1 \otimes (1 \otimes h')(s^{-1}h_2 \otimes 1)\big),\\
(a \otimes 1 \otimes 1)(a' \otimes s^{-1}h_1 \otimes h') &= aa' \otimes s^{-1}h_1 \otimes h',\\
(a \otimes s^{-1}h_1 \otimes h')(1 \otimes 1 \otimes h'' )& =  a \otimes s^{-1}h_1 \otimes h'h'', 
\end{align*}
for all $a,a' \in A$, $h,h',h_1,h_2 \in H$.
\end{lemma}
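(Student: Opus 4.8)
The plan is to produce the DG-algebra structure on $\Om(A;H;H)$ by amalgamating the two DG-algebra structures that already exist on the ``outer'' cobar constructions, then to check that the cobar differential is a derivation for the amalgamated product, that the coaction on the rightmost tensor factor is multiplicative, and finally that $\widetilde\rho$ and $q$ are morphisms of $H$-comodule algebras making the triangle commute. First I would invoke \cite[Corollary 3.6]{hess-levi} to get the functor $\Om(-;H;R)\colon \Alg_R^H\to\Alg_R$, with $\Om(A;H;R)=A\otimes T(\si\overline H)$, together with its left-handed analogue (noted immediately after that lemma), which, applied to $H$ regarded as a left $H$-comodule algebra via $\Delta_H$, endows $\Om(R;H;H)=T(\si\overline H)\otimes H$ with a DG-algebra structure. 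As a graded $R$-module $\Om(A;H;H)=A\otimes T(\si\overline H)\otimes H$ receives $R$-linear inclusions of $\Om(A;H;R)$ (by $w\mapsto w\otimes 1$) and of $\Om(R;H;H)$ (by $w\mapsto 1\otimes w$), and once a unital multiplication extending these two subalgebra structures is available, every homogeneous element is a product $a\otimes\si h_1|\cdots|\si h_n\otimes h'=(a\otimes\si h_1|\cdots|\si h_n\otimes 1)(1\otimes 1\otimes h')$. I would therefore \emph{define} the multiplication to be the unique one restricting to the Hess--Levi product on $\Om(A;H;R)$ and to its analogue on $\Om(R;H;H)$, and satisfying the single commutation relation that re-expresses a product of a right-hand generator $1\otimes\si h_1\otimes h'$ with a left-hand generator $a\otimes\si h_2\otimes 1$ in normal form; unwinding this yields precisely the closed formula in the statement.

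The crux is to show this multiplication is well defined and associative, i.e.\ that the algebra presented by the two outer cobar structures together with the one commutation move has underlying graded module exactly $A\otimes T(\si\overline H)\otimes H$. I would argue this by a rewriting/normal-form (Bergman diamond-lemma) computation: any word in the generators is reduced to the normal form $a\otimes(\text{bar word})\otimes h$ by iteratively moving $A$-factors to the left past bar letters (Hess--Levi commutation) and $H$-factors to the right past bar letters (its left-handed analogue), using that bar letters lie in $\si\overline H$ with the convention $\si 1=0$ so that lengths add under the ``big'' product and are preserved (or kill the term) under each commutation move. One then checks that the finitely many overlap ambiguities resolve, the resolutions being exactly the associativity identities already valid inside $\Om(A;H;R)$ and inside $\Om(R;H;H)$ (known by \cite[Corollary 3.6]{hess-levi} and its analogue) together with a bounded Koszul-sign-and-coaction bookkeeping for the new move.

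Granting the algebra structure, the remaining points reduce to generator-level checks. That $d_\Om$ is a derivation follows because both $d_\Om$ and the product are determined by their effect on the generators $a\otimes 1\otimes 1$, $1\otimes\si h\otimes 1$, $1\otimes 1\otimes h'$ plus the commutation relation, the Leibniz rule holds on $\Om(A;H;R)$ and $\Om(R;H;H)$, so it suffices to verify Leibniz on that one relation, using that the coactions of $A$ and of $H$ are chain maps. That the coaction on the rightmost $H$, $\Om(A;H;H)\to\Om(A;H;H)\otimes H$, is a morphism of DG algebras is immediate on generators, since the right-hand $H$ interacts multiplicatively only with itself and with $A$ trivially; hence $\Om(A;H;H)$ is an $H$-comodule algebra. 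Functoriality in $\Alg_R^H$ is clear, $\Om(f;H;H)=f\otimes T(\si\overline H)\otimes H$ being tautologically compatible with $d_\Om$, with the generator-level products, and with the $H$-coaction. For the factorization, the underlying-comodule statement, that $\widetilde\rho$ and $q$ are morphisms of right $H$-comodules with $q\circ\widetilde\rho=\rho$, is the case $C=H$, $M=A$ of Remark \ref{rmk:coinv}; multiplicativity of $q$ holds because a product with any factor in the positive-bar-length part again has positive bar length, so $q$ kills it, while on the bar-length-zero part $q$ is the multiplication of $A\otimes H$; and $\widetilde\rho$ is multiplicative since it is the composite of the algebra map $\rho\colon A\to A\otimes H$ with the inclusion $A\otimes H\hookrightarrow\Om(A;H;H)$, $a\otimes h\mapsto a\otimes 1\otimes h$, which is multiplicative by the second and third displayed formulas restricted to empty bar words.

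I expect the main obstacle to be exactly the well-definedness and associativity of the multiplication, that is, confirming the diamond-lemma overlap checks: although there are only finitely many overlaps and each reduces to an identity already known in one of the outer cobar algebras, tracking the Koszul signs and the iterated coactions through the new commutation move is delicate. The derivation check for $d_\Om$ is of the same nature but lighter. A more structural route would be to realize $\Om(A;H;H)$ as a two-sided twisted tensor product $A\otimes_t\Om H\otimes_t H$ built from the universal twisting cochain of $H$, and to note that, $H$ being a bialgebra, $\Om H=\Om(R;H;R)$ carries a natural DG-bialgebra structure, so that the DG-algebra structures on $A\otimes_t\Om H$ and on $\Om H\otimes_t H$ glue; this repackages the argument conceptually but does not remove the underlying sign bookkeeping.
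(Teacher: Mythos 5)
The paper itself offers no proof of this lemma --- it is imported verbatim from Karpova's thesis \cite{karpova}, so there is no internal argument to compare yours against. Your reconstruction is nonetheless the natural one and is consistent with how the cited source proceeds: amalgamate the Hess--Levi algebra structure on $\Om(A;H;R)=A\otimes T(\si\overline H)$ with its left-handed analogue on $T(\si\overline H)\otimes H$ (for $H$ viewed as a left $H$-comodule algebra via its comultiplication), impose the displayed commutation relations, verify associativity by reduction to a normal form, and then make generator-level checks of the Leibniz rule, the multiplicativity of the right $H$-coaction, functoriality, and the factorization of $\rho$ through $\widetilde\rho$ and $q$. The alternative packaging you mention (a two-sided twisted tensor product along the universal twisting cochain of $H$) is also a legitimate way to organize the same computation.

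Two points deserve more care than you give them. First, the displayed relations as written only govern products in which a bar letter is present; to determine the multiplication completely --- and, crucially, to make $\widetilde\rho$ multiplicative --- you also need the bar-length-zero commutation $(1\otimes 1\otimes h)(a\otimes 1\otimes 1)=(-1)^{\deg h\deg a}\,a\otimes 1\otimes h$, i.e.\ plain graded commutation of $A\otimes 1\otimes 1$ past $1\otimes 1\otimes H$ with no coaction twist. This is forced: multiplicativity of $\widetilde\rho$ requires $(a_i\otimes 1\otimes h^i)(a'_j\otimes 1\otimes k^j)=\pm a_ia'_j\otimes 1\otimes h^ik^j$, and by coassociativity any coaction-twisted alternative would produce an extra $\mu_H\circ\Delta_H$ and fail. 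It is the degenerate instance of the first displayed formula with both bar words empty, so your uniqueness claim survives, but you attribute the multiplicativity of the inclusion $A\otimes H\hookrightarrow\Om(A;H;H)$ to the second and third formulas, which do not yield it; this case must be stated and used explicitly. Second, in the diamond-lemma step, the overlaps involving all three generator types resolve not merely by associativity inside the two outer cobar algebras but by the bialgebra axiom for $H$ (that $\Delta_H$ is an algebra map) and the comodule-algebra axiom for $\rho_A$; your phrase ``coaction bookkeeping'' presumably covers this, but that is where the real content of the verification lies, so it should be spelled out rather than deferred.
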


Under reasonable conditions on $H$, the two-sided cobar construction $\Om (A;H;H)$ has particularly nice properties as a left $A$-module.

\begin{proposition}\label{prop:om(a,h,h)} If $H$ is a bialgebra  in $\Ch_R$ that is degreewise $R$-flat, and $A$ is an $H$-comodule algebra, then $\Om (A;H;H)$ is homotopy faithfully flat as a left $A$-module, with respect to the structure induced by the algebra map $\tilde \rho\colon   A \to \Om (A;H;H)$. 
\end{proposition}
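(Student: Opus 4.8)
The plan is to deduce homotopy faithful flatness from the two sufficient conditions collected in Proposition~\ref{prop:ch-special-modules}: that $\Om(A;H;H)$ is flat-cofibrant as a left $A$-module, which by part~(2) makes it flat and hence strongly homotopy flat; and that $A$ is a retract of $\Om(A;H;H)$ in ${}_{A}(\Ch_{R})$, which by part~(3) makes it homotopy faithful. Since homotopy faithfully flat means precisely homotopy faithful together with strongly homotopy flat (Definition~\ref{defn:special-modules}), these two facts combine to give the statement.

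The retract is immediate. By Lemma~\ref{lem:factor-alg} both $\widetilde\rho\colon A\to\Om(A;H;H)$ and $q\colon\Om(A;H;H)\to A\otimes H$ are morphisms of $H$-comodule algebras, and $A\otimes\ve\colon A\otimes H\to A$ is an algebra morphism because $\ve$ is the counit of the bialgebra $H$. By the counit axiom for the $H$-coaction $\rho$ on $A$, the composite $A\xrightarrow{\widetilde\rho}\Om(A;H;H)\xrightarrow{q}A\otimes H\xrightarrow{A\otimes\ve}A$ equals $(A\otimes\ve)\circ\rho=\Id_{A}$. Since all three maps are in particular morphisms of left $A$-modules, where $A$ acts on $\Om(A;H;H)$ through $\widetilde\rho$, the module $A$ is a retract of $\Om(A;H;H)$ in ${}_{A}(\Ch_{R})$, and Proposition~\ref{prop:ch-special-modules}(3) applies.

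The substantive point is the flat-cofibrancy of $\Om(A;H;H)$ as a left $A$-module. Because $\ve$ splits the unit $\eta\colon R\to H$, the coaugmentation coideal $\overline H$ is a retract of $H$, so $\overline H$, the shift $s^{-1}\overline H$, each tensor power $(s^{-1}\overline H)^{\otimes p}$, and hence each $(s^{-1}\overline H)^{\otimes p}\otimes H$ is degreewise $R$-flat. The underlying graded left $A$-module of $\Om(A;H;H)=A\otimes T(s^{-1}\overline H)\otimes H$ is graded by word length, that is, by the number of bar factors, with weight-$p$ summand built from $(s^{-1}\overline H)^{\otimes p}\otimes H$, and the $\widetilde\rho$-induced left $A$-action preserves this weight. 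Inspection of the cobar differential $d_{\Om}$ shows that it raises word length by at most one: it is the sum of a weight-preserving internal part, built from the differentials of $A$, of $H$, and of the bar factors, and a part coming from the coactions and the reduced coproducts that raises weight by exactly one. Using this, one builds an exhaustive increasing $r$-split filtration of $\Om(A;H;H)$ in ${}_{A}(\Ch_{R})$ whose successive quotients have the form $A\otimes X(n)$ with $X(n)$ degreewise $R$-flat and of zero differential, by refining the word-length grading: on each word-length associated graded the differential is purely internal, one then splits that piece further by a brutal truncation in homological degree, and finally reassembles the resulting pieces over all word lengths into a single $\mathbb{N}$-indexed filtration. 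With flat-cofibrancy in hand, Proposition~\ref{prop:ch-special-modules}(2) finishes the argument.

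The main obstacle is exactly this last construction. The naive word-length filtration, by submodules of word length at least $p$, is a filtration by subcomplexes and left $A$-submodules but is decreasing rather than increasing, so it cannot be used directly; extracting from it a genuine cellularly $r$-split flat filtration — increasing, exhaustive, with zero-differential flat subquotients $A\otimes X(n)$ — takes care, the more so since $\Ch_{R}$ is unbounded and the word-length summands are not confined to finitely many homological degrees. The remaining verifications (identifying the left $A$-module and coring structures, tracking signs, and checking the $r$-splittings) are routine bookkeeping with the cobar construction.
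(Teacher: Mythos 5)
There is a genuine gap, and it sits exactly where you flag it: the flat-cofibrancy of $\Om(A;H;H)$ as a left $A$-module is asserted but never established, and the construction you sketch cannot work as described. A cellularly $r$-split (flat) filtration must be an \emph{increasing}, exhaustive filtration by sub-$A$-modules that are subcomplexes, with zero-differential quotients $A\otimes X(n)$. The only natural filtration on $\Om(A;H;H)=A\otimes T(\si\overline H)\otimes H$ compatible with the differential is the word-length one, and since every non-internal term of $d_{\Om}$ (the two coaction terms and the reduced-comultiplication term) \emph{raises} word length by one, the subcomplexes are the pieces of word length $\geq p$: the filtration is decreasing, i.e.\ a tower, which is dual to a cell structure, not an instance of one. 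Your proposed repair --- truncating each word-length associated graded in homological degree and ``reassembling'' --- does not produce subcomplexes of the actual object: the internal differential does not vanish on the generators $X(n)$ you would extract, and in the unbounded setting there is no degree in which to start an induction. This is not a presentational issue; the cobar construction is a fibrant (``cofree'') resolution, and there is no reason for it to be relative cofibrant over $A$, which is precisely why the paper does not argue via Proposition \ref{prop:ch-special-modules}(2).

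The paper instead works with the dual structure directly. It exhibits $M\otimes_A\Om(A;H;H)$ (whose underlying graded module is $M\otimes T(\si\overline H)\otimes H$) as the limit, naturally in $M$, of a tower of relative fibrations obtained by pulling back path-object fibrations, and proves homotopy flatness by induction up the tower. Strong homotopy flatness is then obtained not from flatness of a cofibrant replacement but from the observation that $-\otimes_A\Om(A;H;H)$ preserves finite limits outright: it preserves kernels because the underlying nondifferential graded $A$-module is $A$-free, and it preserves finite products because these are finite sums. Your retract argument ($A\xrightarrow{\widetilde\rho}\Om(A;H;H)\xrightarrow{q}A\otimes H\xrightarrow{A\otimes\ve}A$ equal to the identity by counitality) is correct and is essentially what the paper uses for homotopy faithfulness; but the flatness half of your proof needs to be replaced by an argument of the tower/limit type above.
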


\begin{proof}  To see that $\Om (A;H;H)$ is homotopy flat, observe first that for any right $A$-module $M$, the graded $R$-module underlying $M\otimes _{A}\Om (A;H;H)$ is isomorphic to $M\otimes T(\si \overline H)\otimes H$.  Just as in the proofs of \cite[Theorem 7.8]{hess-shipley} and \cite[Theorem 5.15]{berglund-hess:morita}, we can construct $M\otimes _{A}\Om (A;H;H)$ as the limit in $\Ch_{R}^{H}$ of a tower
$$ ...\xrightarrow {q^{n+1}} E_{n}M\xrightarrow {q^{n}} E_{n-1}M\xrightarrow {q_{n-1}} ...\xrightarrow{q_{1}} E_{0}M=M\otimes_{} H$$
 in $(\Ch_{R})^H$, natural in $M$, where each morphism $q_{n}:E_{n}M\to E_{n-1}M$ is given by a pullback  in $(\Ch_{R})^H$ of the form
 $$\xymatrix{E_{n}M\ar [d]_{q_{n}}\ar [r]& \operatorname{Path}(B_{n}M)\otimes H\ar[d]_{p_{n}\otimes H}\\
 E_{n-1}M\ar [r]^{k_{n}}& B_{n}M\otimes H,}$$
 where $B_{n}$ is an functor from $(\Ch_{R})_{A}$ to $\Ch_{R}$, and $p_{n}:\operatorname{Path}(B_{n}M) \to B_{n}M$ is the natural map from the contractible path-object on $B_{n}M$ to  $B_{n}M$ itself, which is a relative fibration of right $A$-modules \cite[Lemma 5.5]{berglund-hess:morita}.  It is important here that $H$ be flat over $A$, so that pullbacks in $(\Ch_{R})^H$ are created in $\Ch_{R}$. 
 
 Given the natural decomposition of $M\otimes _{A}\Om (A;H;H)$ as the limit of a tower of fibrations, an inductive proof, very similar to that of \cite[Theorem 5.15]{berglund-hess:morita}, enables us to show that if $f:M\to N$ is a relative weak equivalence, then $$f\otimes_{A}\Om (A;H;H): M\otimes _{A}\Om (A;H;H) \to  N\otimes _{A}\Om (A;H;H)$$ is a chain homotopy equivalence, i.e., $\Om (A;H;H)$ is homotopy flat.

Since the nondifferential graded $A$-module underlying $\Om(A;H;H)$ is $A$-free, $-\otimes _{A}\Om (A;H;H)$ preserves kernels.  The functor $-\otimes _{A}\Om (A;H;H)$ also preserves all finite products, since they are isomorphic to finite sums.  It follows that $-\otimes_{A}\Om (A;H;H)$ preserves all finite limits, whence $\Om (A;H;H)$ is homotopy faithfully flat.

Since $A$ is a retract of $\Om(A;H;H)$ as an algebra and therefore as an $A$-module, $\Om (A;H;H)$ is homotopy faithful.
\end{proof}

A proof essentially identical to that of \cite[Theorem 7.8]{hess-shipley} enables us to establish the following result; see also \cite[Theorem 5.15]{berglund-hess:morita}.  

\begin{theorem}\label{thm:fib-repl} Let $C$ be a coalgebra in $\Ch_R$ that is degreewise $R$-flat.
For every $C$-comodule $(M,\rho)$,  the maps $\widetilde \rho$ and  $q$ of diagram (\ref{eqn:factor-coaction2}) are a trivial cofibration and a fibration, respectively, in the model structure on  $\Ch_{R}^C$ of Proposition \ref{prop:comodule}.    Moreover, both the source and the target of $q$ are fibrant in $\Ch_R^{C}$, whence $\Om(M;C;C)$ is a fibrant replacement of $M$ in $\Ch_{R}^{C}$.
\end{theorem}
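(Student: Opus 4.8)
The plan is to check the three assertions in turn, reducing the fibrancy statements to the structure of the left-induced model category $\Ch_R^C$ of Proposition~\ref{prop:comodule}---for which cofibrations and weak equivalences are created by the forgetful functor $\UU\colon\Ch_R^C\to\Ch_R$, so that $\UU$ is left Quillen and the cofree functor $-\otimes C$ is right Quillen---together with the elementary fact that every object of $\Ch_R$ is fibrant in the Hurewicz structure. First I would dispose of $\widetilde\rho$. By Remark~\ref{rmk:coinv} we have $q\circ\widetilde\rho=\rho$, the composite $(M\otimes\ve)\circ q$ is a chain homotopy equivalence by a standard extra-degeneracy argument, and $(M\otimes\ve)\circ q\circ\widetilde\rho=(M\otimes\ve)\circ\rho=\Id_M$ by counitality; two-out-of-three then forces $\widetilde\rho$ itself to be a chain homotopy equivalence, hence a weak equivalence in $\Ch_R^C$. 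Since $(M\otimes\ve)\circ q$ is a chain map splitting $\widetilde\rho$, the map $\widetilde\rho$ is in particular a degreewise-split monomorphism, i.e.\ a Hurewicz cofibration in $\Ch_R$, hence a cofibration in $\Ch_R^C$; thus $\widetilde\rho$ is a trivial cofibration. Likewise $M\otimes C$ is fibrant in $\Ch_R^C$, being the value of the right Quillen functor $-\otimes C$ on the (automatically fibrant) chain complex $M$.

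The substantive point is that $q\colon\Om(M;C;C)\to M\otimes C$ is a fibration and $\Om(M;C;C)$ is fibrant in $\Ch_R^C$. Here I would reproduce, with the obvious modifications, the argument of \cite[Theorem 7.8]{hess-shipley} (compare \cite[Theorem 5.15]{berglund-hess:morita} and the proof of Proposition~\ref{prop:om(a,h,h)}), transported from modules to $C$-comodules. The key step is to present $q$ as the canonical map from the limit of a tower $\cdots\to E_nM\xrightarrow{q_n}E_{n-1}M\to\cdots\to E_0M=M\otimes C$ in $\Ch_R^C$, natural in the $C$-comodule $M$, with $\lim_n E_nM\cong\Om(M;C;C)$, in which each $q_n$ is obtained by pulling back, in $\Ch_R^C$, a path-object fibration $p_n\otimes C\colon\operatorname{Path}(B_nM)\otimes C\to B_nM\otimes C$ along a comodule map $E_{n-1}M\to B_nM\otimes C$, where $B_n$ is a suitable functor into $\Ch_R$ and $p_n\colon\operatorname{Path}(B_nM)\to B_nM$ is the canonical fibration from the contractible path object in $\Ch_R$ (cf.\ \cite[Lemma 5.5]{berglund-hess:morita}). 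Since $C$ is degreewise $R$-flat, $(R,C)$ is a flat coring, so by Remark~\ref{rmk:limits-comods} these pullbacks and the tower limit are created in $\Ch_R$; this is exactly what lets one identify $\lim_n E_nM$ with $\Om(M;C;C)$ on underlying objects, after which one verifies that the induced differential is the cobar differential $d_\Om$. Because $p_n$ is a fibration in $\Ch_R$ and $-\otimes C$ is right Quillen, $p_n\otimes C$ is a fibration in $\Ch_R^C$; pullbacks of fibrations are fibrations, so each $q_n$ is a fibration; and the limit of a countable tower of fibrations maps to its base by a fibration, whence $q$ is a fibration in $\Ch_R^C$. As its target $M\otimes C$ is fibrant, $\Om(M;C;C)$ is fibrant as well. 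Assembling these facts, diagram~(\ref{eqn:factor-coaction}) exhibits $\widetilde\rho\colon M\xrightarrow{\sim}\Om(M;C;C)$ as a trivial cofibration into a fibrant object, i.e.\ a functorial fibrant replacement of $M$ in $\Ch_R^C$.

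I expect the tower decomposition of $q$ to be the \emph{main obstacle}: constructing the functors $B_n$ and the pullback squares, and checking that the differential the iterated pullbacks induce on $\lim_n E_nM$ agrees, including all signs and iterated coproducts, with the explicit cobar differential $d_\Om$. Everything else is formal---from the left-induced model structure (Proposition~\ref{prop:comodule}), the flatness of $C$ (Remark~\ref{rmk:limits-comods}), and Remark~\ref{rmk:coinv}. Since this tower is built in full detail in \cite[Theorem 7.8]{hess-shipley}, the proof essentially consists in porting that construction to the present setting and then invoking the bookkeeping above.
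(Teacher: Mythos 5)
Your proposal is correct and follows essentially the same route as the paper, which itself proves this theorem by declaring the argument ``essentially identical'' to \cite[Theorem 7.8]{hess-shipley} (see also \cite[Theorem 5.15]{berglund-hess:morita}): the formal bookkeeping for $\widetilde\rho$ and $M\otimes C$ via the left-induced structure and the right Quillen functor $-\otimes C$, plus the tower-of-pullbacks presentation of $q$ using flatness of $C$ to create the relevant limits in $\Ch_R$. You have correctly identified the tower decomposition and the matching of the induced differential with $d_\Om$ as the only substantive content, and that content is exactly what the cited theorem supplies.
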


Since the fibrations and weak equivalences in $\Alg_{R}^{H}$ are created in $\Ch_{R}^{H}$,  the next result is an immediate consequence of Lemma \ref{lem:factor-alg} and Theorem \ref{thm:fib-repl}.

\begin{corollary}\label{cor:fib-repl} Let $H$ be a bialgebra in $\Ch_R$ that is degreewise $R$-flat.
For every $H$-comodule algebra $(A,\rho)$,  the maps $\widetilde \rho: A \to \Om (A;H;H)$ and  $q:\Om (A;H;H)\to A\otimes H$ are a trivial cofibration and a fibration, respectively, in  $\Alg_R^H$.    Moreover, both the source and the target of $q$ are fibrant in $\Alg_R^{H}$, whence $\Om(A;H;H)$ is a fibrant replacement of $A$ in $\Alg_R^{H}$.
\end{corollary}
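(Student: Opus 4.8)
The plan is to obtain the whole corollary by transporting the statement of Theorem~\ref{thm:fib-repl} along the forgetful functor $\UU\colon \Alg_R^H\to \Ch_R^H$, using Lemma~\ref{lem:factor-alg} to guarantee that the cobar factorization of $\rho$ already lives in $\Alg_R^H$. Concretely, I would first apply Theorem~\ref{thm:fib-repl} with the coalgebra $C$ taken to be the coalgebra underlying the bialgebra $H$, which is degreewise $R$-flat precisely because $H$ is. For the $H$-comodule $(A,\rho)$ this yields, inside $\Ch_R^H$, that $\widetilde\rho\colon A\to \Om(A;H;H)$ is a trivial cofibration, that $q\colon \Om(A;H;H)\to A\otimes H$ is a fibration, and that both $\Om(A;H;H)$ and $A\otimes H$ are fibrant.

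Next I would invoke Lemma~\ref{lem:factor-alg}, which says that $\widetilde\rho$ and $q$ are in fact morphisms of $H$-comodule algebras and that the factorization $\rho=q\circ\widetilde\rho$ holds in $\Alg_R^H$. By Theorem~\ref{thm:comod-alg} the forgetful functor $\UU\colon \Alg_R^H\to \Ch_R^H$ creates, hence also reflects, fibrations and weak equivalences, and it sends the terminal object of $\Alg_R^H$, namely the zero algebra, to the terminal object of $\Ch_R^H$. Therefore $q$ is a fibration in $\Alg_R^H$, $\widetilde\rho$ is a weak equivalence in $\Alg_R^H$, and $\Om(A;H;H)$ and $A\otimes H$ are fibrant objects of $\Alg_R^H$. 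In particular $\widetilde\rho\colon A\to \Om(A;H;H)$ is a weak equivalence into a fibrant object, which is exactly what is needed for the final clause that $\Om(A;H;H)$ is a fibrant replacement of $A$.

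The only assertion not produced automatically by this transport is that $\widetilde\rho$ is a \emph{cofibration} of $H$-comodule algebras, since $\UU$ does not create cofibrations; I expect this to be the sole point that requires a genuine (if routine) argument. The natural approach is to filter $\Om(A;H;H)$ by word length in the tensor algebra $T(\si\overline H)$ and to check, using the multiplicative cobar formulas of Lemma~\ref{lem:factor-alg}, that the resulting tower $A=P_{0}\to P_{1}\to\cdots$ with colimit $\Om(A;H;H)$ is obtained in $\Alg_R^H$ by successively pushing out free $H$-comodule-algebra maps on (trivial) cofibrations of $\Ch_R^H$, the perturbation of the cobar differential being absorbed by the filtration exactly as in the cellular analyses underlying Theorem~\ref{thm:fib-repl}. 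Untangling those formulas far enough to make the cell structure visible is the main obstacle; granting it, $\widetilde\rho$ is a transfinite composite of pushouts of trivial cofibrations, hence itself a trivial cofibration, and the corollary follows.
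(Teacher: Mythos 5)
Your proposal is essentially the paper's proof: the authors dispose of the corollary in one line, observing that fibrations and weak equivalences in $\Alg_R^H$ are created in $\Ch_R^H$ (Theorem \ref{thm:comod-alg}), so that Lemma \ref{lem:factor-alg} places the factorization in $\Alg_R^H$ and Theorem \ref{thm:fib-repl} transports the fibration, weak-equivalence, and fibrancy assertions (your remark that the terminal comodule algebra forgets to the terminal comodule, so fibrancy is also detected, is the same observation). Where you differ is in flagging the trivial-cofibration clause for $\widetilde\rho$: you are right that this is not produced by the transport, since the right-induced structure does not create cofibrations, but be aware that the paper does \emph{not} supply the cellular filtration argument you sketch --- it treats the entire statement as an immediate consequence, so that clause is left unjustified there as well. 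Your caution is therefore well placed rather than a divergence from a more careful argument in the paper; and note that for the only use made of the corollary (that $\Om(A;H;H)$ is a fibrant replacement of $A$, and hence Corollary \ref{cor:hcoH-model}), only the weak-equivalence-into-a-fibrant-object part is needed, which your steps already establish.
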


The second part of Remark \ref{rmk:coinv} implies that the next result is an immediate consequence of Corollary \ref{cor:fib-repl}.

\begin{corollary}\label{cor:hcoH-model}  Let $H$ be a bialgebra in $\Ch_R$ that is degreewise $R$-flat.
For every $H$-comodule algebra $(A,\rho)$,  the algebra $\Om (A;H;R)$ is a model of $A^{\mathrm{hco}H}$.
\end{corollary}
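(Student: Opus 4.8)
The plan is to transport the computation of $A^{\mathrm{hco}H}$ onto the explicit fibrant replacement of $A$ furnished by the two-sided cobar construction. By definition, a model of $A^{\mathrm{hco}H}$ is the algebra $(A^{f})^{\mathrm{co}H}$ for \emph{any} fibrant replacement $A^{f}$ of $A$ in $\Alg_{R}^{H}$, and Corollary~\ref{cor:fib-repl} tells us that the trivial cofibration $\widetilde\rho\colon A\to \Om(A;H;H)$ is precisely such a replacement. So the whole statement reduces to exhibiting an isomorphism of algebras
$$\Om(A;H;H)^{\mathrm{co}H}\cong \Om(A;H;R).$$

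At the level of underlying chain complexes this is exactly the assertion made in the second part of Remark~\ref{rmk:coinv}, applied with $M=A$ and $C=H$: the map induced on the rightmost tensor factor by the coaugmentation $\eta\colon R\to H$ embeds $\Om(A;H;R)=A\tensor T(\si\overline H)\tensor R$ into $\Om(A;H;H)$, its image is the sub-chain-complex of $H$-coinvariants (because $\Delta_{H}\eta=\eta\tensor\eta$, so this factor is grouplike, while $1\tensor 1\tensor\ve_{H}$ provides a retraction), and the differential $d_{\Om}$ of $\Om(A;H;H)$ restricts on this summand to that of $\Om(A;H;R)$, since the last term $\pm x\tensor\si c_{1}|\cdots|\si c_{n}|\si c_{i}\tensor y^{i}$ of $d_{\Om}$ vanishes when the rightmost factor $y$ equals $1$, as $\si 1=0$. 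Thus only the multiplicative upgrade remains.

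For the upgrade I would first record that the $H$-coinvariants functor $(-)^{\mathrm{co}H}\colon\Alg_{R}^{H}\to\Alg_{R}$, being the right adjoint of the trivial-coaction functor, sends an $H$-comodule algebra $B$ to the equalizer in $\Alg_{R}$ of the two algebra maps $\rho_{B}\colon B\to B\tensor H$ and $B\cong B\tensor R\xrightarrow{B\tensor\eta}B\tensor H$; since finite limits in $\Alg_{R}$ are created in $\Ch_{R}$, this equalizer is exactly the chain-level coinvariants of the preceding paragraph, equipped with its inherited multiplication. Consequently it suffices to check that the inclusion $\Om(A;H;R)\hookrightarrow\Om(A;H;H)$ is a morphism of algebras for the structures of Lemma~\ref{lem:factor-alg} and of \cite{hess-levi}; this is a direct inspection of the three product formulas displayed in Lemma~\ref{lem:factor-alg}, each of which collapses to the corresponding identity in $\Om(A;H;R)$ upon setting the rightmost $H$-factor equal to $1$. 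This last piece of multiplicative bookkeeping is the only real — and quite mild — obstacle; everything else is assembled directly from Corollary~\ref{cor:fib-repl} and Remark~\ref{rmk:coinv}.
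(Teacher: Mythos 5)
Your proof is correct and follows exactly the route the paper takes: the paper deduces the corollary as an immediate consequence of Corollary \ref{cor:fib-repl} together with the identification $\Om(A;H;H)^{\mathrm{co}H}\cong\Om(A;H;R)$ from the second part of Remark \ref{rmk:coinv}. The only difference is that you spell out the chain-level and multiplicative compatibility of that identification, which the paper leaves implicit.
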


Before illustrateing further the utility of Corollary \ref{cor:fib-repl}, we need to define a special condition on the coalgebra structure of the bialgebras we study.

\begin{definition}\cite[Definition 6.4.6]{hkrs} A coaugmented coalgebra $C$ in $\Ch_{R}$ is \emph{split-conilpotent} if there is a sequence 
$$R=C[-1]\xrightarrow {j_{0}}C[0]\xrightarrow {j_{1}}\cdots \xrightarrow{j_{n-1}} C[n-1]\xrightarrow {j_{n}} C[n]\xrightarrow {j_{n+1}} \cdots $$
of degreewise-split inclusions of subcoalgebras of $C$ such that  $C=\operatorname{colim}_{n}C_{n}$ and $\coker j_{n}$ is a trivial (non-counital) coalgebra for all $n$.
\end{definition}

\begin{remark}\label{rmk:split} As observed in \cite[Remark 6.4.7]{hkrs}, over a field any conilpotent coalgebra is split-conilpotent, while over an arbitrary commutative ring $R$, any coalgebra in $\Ch_{R}$ with cofree underlying graded coalgebra is split-conilpotent.  In particular, for any augumented algebra $A$ in $\Ch_{R}$, its bar construction $\Bar A$ is a split-conilpotent coalgebra.  On the other hand, by \cite[Corollary 6.4.3]{hkrs}, if $H$ is a conilpotent bialgebra (i.e., as a coalgebra, it is the colimit of its primitive filtration), then the counit $\ve_{H}:H \to \Bar \Om H$ of the cobar-bar adjunction is a relative weak equivalence of bialgebras, i.e., the underlying map of chain complexes is a chain homotopy equivalence.  In other words, every conilpotent bialgebra is relative weakly equivalent to a split-conilpotent bialgebra.  Moreover, if $H$ is degreewise $R$-flat, then $\ve_{H}$ induces a Quillen equivalence
$$\adjunction{\Alg_{R}^{H}}{\Alg_{R}^{\Bar\Om H},}{(\ve_{H})_{*}}{(\ve_{H})^{*}}$$
since \cite[Theorem 5.16]{berglund-hess:morita} implies that $\ve_{H}$ is copure.  For homotopy-theoretic purposes, there is no loss of generality, therefore, in assuming that any degreewise $R$-flat, conilpotent bialgebra is in fact split-conilpotent.
\end{remark}

The main reason for our interest in split-conilpotent bialgebras lies in the next result.

\begin{proposition}\label{prop:hyp1} For any bialgebra $H$ in $\Ch_R$ that is degreewise $R$-flat and split-conilpotent as a coalgebra, the functor $$(-)^{\mathrm{co}H}\colon   \Alg_R^{H}\to \Alg_R$$ reflects weak equivalences between fibrant objects.
\end{proposition}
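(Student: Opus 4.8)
The plan is to compare the two cobar constructions $\Om(-;H;H)$ and $\Om(-;H;R)$, using that the first is a fibrant replacement functor on $\Alg_R^H$ (Corollary \ref{cor:fib-repl}) and that the second computes homotopy coinvariants. First I would record that $(-)^{\mathrm{co}H}$ is right Quillen: by Convention \ref{conv:hhg}, or Remark \ref{rmk:QA-coring map} applied to the unit $\eta\colon R\to H$, the adjunction $\eta_{*}\dashv(-)^{\mathrm{co}H}$ between $\Alg_R$ and $\Alg_R^H$ is a Quillen adjunction, so $(-)^{\mathrm{co}H}$ sends weak equivalences between fibrant objects to weak equivalences. For any fibrant $A$ in $\Alg_R^H$, Corollary \ref{cor:fib-repl} says $\widetilde\rho\colon A\to\Om(A;H;H)$ is a trivial cofibration with fibrant target, hence a weak equivalence between fibrant objects; applying $(-)^{\mathrm{co}H}$ and using $\Om(A;H;H)^{\mathrm{co}H}\cong\Om(A;H;R)$ (Remark \ref{rmk:coinv}, Corollary \ref{cor:hcoH-model}) gives a natural weak equivalence of algebras $A^{\mathrm{co}H}\xrightarrow{\sim}\Om(A;H;R)$. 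Hence, if $f\colon A\to B$ is a morphism of fibrant $H$-comodule algebras with $f^{\mathrm{co}H}$ a weak equivalence, the resulting commuting square and two-out-of-three force $\Om(f;H;R)$ to be a chain homotopy equivalence; and since $\widetilde\rho$ is \emph{always} a chain homotopy equivalence (Remark \ref{rmk:coinv}), naturality of $\widetilde\rho$ together with two-out-of-three reduces the statement to the implication: if $\Om(f;H;R)$ is a chain homotopy equivalence, then so is $\Om(f;H;H)$.

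To prove this implication I would use the natural isomorphism of complexes
$$\Om(A;H;H)\;\cong\;\Om(A;H;R)\otimes_{\Om H}\Om(R;H;H),$$
where $\Om(-;H;R)$ carries its natural right $\Om H$-module structure (concatenation of bars) and $\Om(R;H;-)$ its natural left $\Om H$-module structure, with $\Om H=T(\si\overline H)$; this factorization of the two-sided cobar construction follows by inspecting the differential, the two ends of $\Om(A;H;H)$ interacting only through the middle tensor algebra. Under it, $\Om(f;H;H)$ corresponds to $\Om(f;H;R)\otimes_{\Om H}\Om(R;H;H)$. Now $\Om(f;H;R)$ is $\Om H$-linear and, by the reduction above, a chain homotopy equivalence, hence a weak equivalence of right $\Om H$-modules in the relative model structure on $(\Ch_R)_{\Om H}$ (whose weak equivalences are created in $\Ch_R$). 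So it suffices to show that $\Om(R;H;H)$ is homotopy flat as a left $\Om H$-module, for then $-\otimes_{\Om H}\Om(R;H;H)$ preserves weak equivalences (Definition \ref{defn:special-modules}).

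To establish homotopy flatness of $\Om(R;H;H)$ over $\Om H$ I would run the same kind of argument as in the proof of Proposition \ref{prop:om(a,h,h)}: for a cofibrant right $\Om H$-module $N$, exhibit $N\otimes_{\Om H}\Om(R;H;H)$ — whose underlying graded module is $N\otimes_R H$ — as the limit of a tower of relative fibrations obtained by filtering $H$ along the split-conilpotent filtration $R=H[-1]\to H[0]\to\cdots$, the triviality of the subquotient coalgebras $\coker j_n$ controlling the reduced-comultiplication term of the cobar differential and the degreewise $R$-flatness of $H$ ensuring that the relevant pullbacks are formed in $\Ch_R$; then propagate a weak equivalence $N\to N'$ up the tower by induction, just as in \cite{hkrs}, \cite{hess-shipley}, and Proposition \ref{prop:om(a,h,h)}. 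Feeding homotopy flatness of $\Om(R;H;H)$ back through the two reductions shows that $\Om(f;H;H)$, and therefore $f$, is a chain homotopy equivalence.

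The step I expect to be the main obstacle is this last one — the convergence of the filtration/tower argument — since the naive bar-length filtration on $\Om(A;H;H)$ is exhaustive and Hausdorff but \emph{not} complete, so one cannot simply run a spectral sequence. This is exactly where the hypotheses on $H$ are indispensable: split-conilpotence is what organizes the cobar construction into a tower of relative fibrations along which the inductive comparison of chain homotopy equivalences can actually be carried out, and degreewise $R$-flatness is what keeps all the constructions exact and the towers meaningful, precisely as in the fibrant-replacement theorem.
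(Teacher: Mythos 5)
Your first reduction is exactly the one the paper makes: $(-)^{\mathrm{co}H}$ is right Quillen, $\widetilde\rho\colon A\to\Om(A;H;H)$ is a weak equivalence of fibrant objects with $\Om(A;H;H)^{\mathrm{co}H}\cong\Om(A;H;R)$, and two-out-of-three reduces everything to the implication ``$\Om(f;H;R)$ a chain homotopy equivalence $\Rightarrow$ $\Om(f;H;H)$ a chain homotopy equivalence.'' For that implication you diverge: you factor $\Om(A;H;H)\cong\Om(A;H;R)\otimes_{\Om H}\Om(R;H;H)$ and reduce to homotopy flatness of $\Om(R;H;H)$ over $\Om H$, whereas the paper works directly with $\Om(A;H;H)$, filtering it by the subcomplexes $\Om\big(A;H;H[n]\big)$ coming from the split-conilpotent filtration of $H$ and inducting with the Homotopy Five Lemma (Lemma \ref{lem:h5}) on the resulting degreewise-split short exact sequences, whose quotients are $\Om(A;H;R)\otimes\coker j_{n}$ with untwisted differential.

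The gap sits exactly where you flag it. The mechanism you propose for homotopy flatness of $\Om(R;H;H)$ --- a \emph{limit of a tower of relative fibrations} built from pullbacks, as in Proposition \ref{prop:om(a,h,h)} --- has the wrong variance for the filtration you invoke. The tower-of-fibrations presentation in Proposition \ref{prop:om(a,h,h)} comes from the cobar word-length filtration on the $T(\si\overline H)$ factor, which is precisely the filtration whose convergence you correctly identify as problematic. The split-conilpotent filtration of the outer $H$ factor behaves oppositely: since each $H[n]$ is a subcoalgebra and each $j_{n}$ is degreewise split, the $\Om H\otimes H[n]$ form an \emph{increasing} sequence of degreewise-split subcomplexes (and sub-left-$\Om H$-modules) whose union is $\Om(R;H;H)$; there is no inverse-limit presentation to converge. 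The correct tool is the one the paper uses: the Homotopy Five Lemma applied to the split short exact sequences whose quotients are tensors with $\coker j_{n}$ (triviality of $\coker j_{n}$ as a coalgebra is what kills the comultiplication perturbation there), followed by passage to the colimit of a directed system of Hurewicz cofibrations of cofibrant objects. Once you make that replacement your route does close up --- the factorization $\Om(A;H;H)\cong\Om(A;H;R)\otimes_{\Om H}\Om(R;H;H)$ is correct, and $\Om(f;H;H)$ is indeed $\Om(f;H;R)\otimes_{\Om H}\Om(R;H;H)$ --- but at that point the detour through $\Om H$-modules buys nothing, since the identical filtration argument applies verbatim to $\Om(A;H;H)$ filtered by $\Om\big(A;H;H[n]\big)$, which is the paper's proof.
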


A key step in the proof of this proposition requires the following lemma from \cite{berglund-hess:morita}.

\begin{lemma}[The Homotopy Five Lemma]\label{lem:h5}\cite[Lemma 5.1]{berglund-hess:morita} Let
$$\xymatrix{0\ar[r]&M'\ar [r]\ar[d]_{f'}&M\ar [r]\ar[d]_{f}&M''\ar [r]\ar[d]_{f''}&0\\0\ar[r]&N'\ar [r]&N\ar [r]&N''\ar [r]&0}$$
be a commuting diagram in $\Ch_{R}$, where the rows are degreewise-split, exact sequences. If $f'$ and $f''$ are chain homotopy equivalences, then so is $f$.
\end{lemma}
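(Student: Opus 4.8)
The plan is to reduce the statement to the single assertion that a degreewise-split extension of contractible complexes is contractible, and then to produce the required contracting homotopy explicitly from the triangular form of the differential. First I would invoke the standard fact that a chain map $f\colon M\to N$ of complexes over $R$ is a chain homotopy equivalence if and only if its mapping cone $\operatorname{cone}(f)$ is chain contractible. Applying the cone construction to the given morphism of short exact sequences, and using that both rows are degreewise split, yields a degreewise-split short exact sequence of cones
$$0\to \operatorname{cone}(f')\to \operatorname{cone}(f)\to \operatorname{cone}(f'')\to 0.$$
Since $f'$ and $f''$ are chain homotopy equivalences, the outer cones are contractible. Thus it suffices to prove: if $0\to C'\to C\to C''\to 0$ is a degreewise-split short exact sequence in $\Ch_R$ with $C'$ and $C''$ contractible, then $C$ is contractible.

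To prove this reduced statement, I would fix a graded splitting $C\cong C'\oplus C''$, under which the differential of $C$ takes the upper-triangular form
$$d_C=\begin{pmatrix} d' & \tau\\ 0 & d''\end{pmatrix}$$
for a degree $-1$ graded map $\tau\colon C''\to C'$ satisfying $d'\tau+\tau d''=0$ (the twisting recording the extension, forced by $d_C^2=0$). Choosing contracting homotopies $s'$ and $s''$ for $C'$ and $C''$, I would then verify by a short direct computation that
$$s=\begin{pmatrix} s' & -s'\tau s''\\ 0 & s''\end{pmatrix}$$
satisfies $d_C s+s d_C=\operatorname{Id}_C$; the off-diagonal entry is engineered precisely so that the identities $d's'+s'd'=\operatorname{Id}$, $d''s''+s''d''=\operatorname{Id}$, and $d'\tau=-\tau d''$ combine to cancel the twisting contribution in the top-right corner. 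This exhibits $C$ as contractible and completes the reduction.

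The main obstacle is bookkeeping rather than conceptual: one must track the Koszul signs in the mapping-cone differential carefully enough to confirm that the induced sequence of cones really is a degreewise-split morphism of complexes, and that the homotopy $s$ above carries the correct signs. Equivalently, one could phrase the entire argument in the triangulated homotopy category $K(R)$, where the two degreewise-split rows determine distinguished triangles, the compatibility of their connecting morphisms follows from the given commuting squares, and the five lemma for triangulated categories forces $f$ to be an isomorphism in $K(R)$, i.e.\ a chain homotopy equivalence. I would nevertheless present the explicit-homotopy version as the primary argument, since it remains entirely within the concrete chain-homotopy framework of the Hurewicz structure and requires no triangulated-category machinery.
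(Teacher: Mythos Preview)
Your argument is correct. The paper does not supply a proof of this lemma at all; it merely cites it from \cite[Lemma 5.1]{berglund-hess:morita} and moves directly to the proof of Proposition~\ref{prop:hyp1}. Your reduction via mapping cones to the claim that a degreewise-split extension of contractible complexes is contractible, followed by the explicit upper-triangular contracting homotopy, is a clean and standard proof; the verification $d_Cs+sd_C=\operatorname{Id}$ goes through exactly as you indicate, the off-diagonal term collapsing to $s'(d'\tau+\tau d'')s''=0$. The triangulated alternative you sketch is the same argument in different clothing, since degreewise-split short exact sequences are precisely the triangles in $K(R)$.
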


\begin{proof}[Proof of Proposition \ref{prop:hyp1}]  Let $\vp\colon   (A,\rho_{A}) \to (B,\rho_{B})$ be a morphism of fibrant $H$-comodule algebras such that $\vp^{\mathrm{co}H}$ is a relative weak equivalence.  To show that $\vp$ is necessarily also a relative weak equivalence, first consider the commutative diagram of fibrant $H$-comodule algebras
$$\xymatrix{A \ar [d]_{\tilde \rho_{A}}^{\sim }\ar [rr]^{\vp}&& B\ar[d]_{\tilde \rho_{B}}^{\sim }\\
\Om (A;H;H)\ar[rr]^{\Om(\vp;H;H)}&& \Om (B;H;H).}$$
Applying $(-)^{\mathrm{co}H}$, we obtain a commutative diagram of algebras
$$\xymatrix{A^{\mathrm{co}H} \ar [d]_{\tilde \rho_{A}^{\mathrm{co}H}}^{\sim }\ar [rr]^{\vp^{\mathrm{co}H}}_{\sim }&& B^{\mathrm{co}H}\ar[d]_{\tilde\rho_{B}^{\mathrm{co}H}}^{\sim }\\
\Om (A;H;R )\ar[rr]^{\Om(\vp;H;R )}&& \Om (B;H;R ),}$$
where the vertical arrows are still relative weak equivalences, since $(-)^{\mathrm{co}H}$ is a right Quillen functor, and the top horizontal arrow is a relative weak equivalence by hypothesis.  By two-out-of-three, $\Om (\vp;H;R)$ is also a relative weak equivalence.

We can now prove by induction, using the Homotopy Five Lemma, that $\Om(\vp; H;H)$ is also a relative weak equivalence.   Let
$$R=H[-1]\xrightarrow {j_{0}}H[0]\xrightarrow {j_{1}}\cdots \xrightarrow{j_{n-1}} H[n-1]\xrightarrow {j_{n}} H[n]\xrightarrow {j_{n+1}} \cdots $$
be a sequence of degreewise-split inclusions of subcoalgebras of $H$ such that  $H=\operatorname{colim}_{n}H_{n}$ as colgebras, and $\coker j_{n}$ is a trivial (non-counital) coalgebra for all $n$, which induces
induces a filtration of $\Om (A;H;H)$ as chain complexes
$$\Om (A;H;R)\subseteq \Om \big(A;H;H[0]\big) \subseteq \cdots \subseteq \Om \big(A;H;H[n-1]\big) \subseteq \Om \big(A;H;H[n]\big) \subseteq\cdots  ,$$
with  filtration quotients 
$$\Om \big(A;H;H[n]\big)/\Om \big(A;H;H[n-1]\big)\cong\Om (A;H;R)\otimes \coker j_{n},$$
on which the differential is of the form $d_{\Om}\otimes 1 + 1 \otimes \bar d$, where $\bar d$ the induced differential on $\coker j_{n}$.   There is, of course, an analogous filtration of $\Om (B;H;H)$.   

We showed above that $\vp$ induces a relative weak equivalence, $\Om (\vp;H;R)$, from the 0th filtration stage of $\Om(A;H;H)$ to the 0th filtration stage of $\Om (B;H;H)$.  Suppose now that $\Om (\vp; H;P_{n}H): \Om \big(A;H;H[n-1]\big) \to \Om \big(B;H;H[n-1]\big)$ is a relative weak equivalence for some $n\geq 1$.  Consider the commuting diagram of  degreewise-split short exact sequences
$$\xymatrix{0\ar[r]&\Om \big(A;H;H[n-1]\big)\ar [r]\ar[d]_{\Om \big(\vp;H;H[n-1]\big)}&\Om \big(A;H;H[n]\big)\ar [r]\ar[d]_{\Om \big(\vp;H;H[n]\big)}&\Om (A;H;R)\otimes \coker j_{n}\ar [r]\ar[d]_{\Om (\vp; H; R)\otimes 1}&0\\0\ar[r]&\Om \big(B;H;H[n-1]\big)\ar [r]&\Om \big(B;H;H[n]\big)\ar [r]&\Om (B;H;R)\otimes \coker j_{n}\ar [r]&0.}$$
The leftmost and rightmost vertical arrows are chain homotopy equivalences by hypothesis and therefore, by the Homotopy Five Lemma, the middle vertical arrow is as well.  It follows that $\Om \big(\vp;H;H[n]\big):\Om \big(A;H;H[n]\big) \to \Om \big(B;H;H[n]\big)$ is a relative weak equivalence for all $n$ and thus that $\Om(\vp; H;H)$ is a relative weak equivalence, as the filtrations of $\Om(A;H;H)$ and $\Om(B;H;H)$ can be seen as colimits of directed systems of cofibrations of cofibrant objects in $\Ch_{R}$.

Finally, two-out-of-three applied to the first diagram implies that $\vp$ is a relative weak equivalence as well.
\end{proof}

\subsection{Homotopic relative Hopf-Galois extensions of chain algebras}

We can now provide concrete examples of homotopic relative Hopf-Galois extensions in $\Ch_R$, as well as conditions under which being a homotopic Hopf-Galois extension is equivalent to satisfying homotopic descent, which enables us moreover to include a generalized notion of Koszul duality in our global picture.

We begin by establishing the existence of a useful class of homotopic relative Hopf algebras.

\begin{lemma} A morphism of bialgebras $\Gamma\colon  H\to K$ in $\Ch_R$ is a homotopic relative Hopf algebra if $H$ and $K$ are degreewise $R$-flat, and $K$ is cofibrant as a left $H$-module and contains $H$ as a summand, with respect to the structure induced by $\Gamma$.
\end{lemma}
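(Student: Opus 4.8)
The plan is to reduce the claim, via Corollary~\ref{cor:htpic rel HA}, to a single assertion about the Galois morphism of $K$-corings, and then to obtain that assertion from Theorem~\ref{thm:copure}.

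In the Hurewicz structure on $\Ch_R$ every object is fibrant, so in particular the monoidal unit $R=\kk$ is; hence Corollary~\ref{cor:htpic rel HA} applies, and (using that the identity on $\kk$ then models $\Gamma^{\mathrm{hco}\Gamma}$) the entire claim reduces to showing that
$$\Gal(\Gamma,\Gamma)\colon \Desc(\Gamma)\longrightarrow \Hopf(\Delta_K,\bar\mu_K)$$
is a copure weak equivalence of $K$-corings. By Theorem~\ref{thm:copure} it suffices to verify that $\Desc(\Gamma)=\big(K,K\otimes_H K\big)$ is a flat $K$-coring, that $\Hopf(\Delta_K,\bar\mu_K)=\big(K,K\otimes\Cof(\Gamma)\big)$ is a coaugmented $K$-coring which is flat-cofibrant as a left $K$-module, and that $\Gal(\Gamma,\Gamma)$ is a relative weak equivalence, i.e.\ a chain homotopy equivalence of underlying complexes.

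The two flatness conditions I would handle as follows. Since $H$ is a bialgebra, $R$ is a degreewise retract of $H$ via its unit and counit; together with the hypotheses that $H$ and $K$ are degreewise $R$-flat and that $K$ is cofibrant as a left $H$-module, this upgrades $K$ to a \emph{flat}-cofibrant left $H$-module, because in a cellularly $r$-split filtration of an $H$-module of which $K$ is a retract the subquotients $H\otimes X(n)$ retract degreewise onto the $X(n)$, which are thus degreewise $R$-flat. By Proposition~\ref{prop:ch-special-modules}(2), $K$ is then a flat left $H$-module, so $-\otimes_H K$ preserves coreflexive equalizers; as $-\otimes_K\Desc(\Gamma)$ is naturally isomorphic to $-\otimes_H K$ precomposed with restriction of scalars along $\Gamma$, the coring $\Desc(\Gamma)$ is flat. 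For $\Hopf(\Delta_K,\bar\mu_K)$, the coaugmentation is $K\xrightarrow{\,K\otimes\eta_{\Cof(\Gamma)}\,}K\otimes\Cof(\Gamma)$; applying $R\otimes_H-$ to the $H$-linear splitting of $\Gamma$ splits $\pi_\Gamma$ degreewise, so $\Cof(\Gamma)=R\otimes_H K$ is a degreewise retract of the degreewise-$R$-flat complex $K$ and is therefore itself degreewise $R$-flat; by Remark~\ref{rmk:cofib-tens} (and the description of flat-cofibrant modules via cellularly $r$-split flat filtrations), $K\otimes\Cof(\Gamma)$ is flat-cofibrant as a left $K$-module.

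The main obstacle is the third point: that $\Gal(\Gamma,\Gamma)\colon K\otimes_H K\to K\otimes\Cof(\Gamma)$ is a chain homotopy equivalence. Classically (Example~\ref{example:normal}) this map is even an \emph{isomorphism} once $K$ has a normal basis, i.e.\ is isomorphic to $H\otimes\Cof(\Gamma)$ as a left $H$-module and right $\Cof(\Gamma)$-comodule. The plan is a homotopical version of that computation: using that $K$ is cofibrant over $H$ and contains $H$ as a summand, one equips $\Cof(\Gamma)$ with a filtration so that the induced filtration of $K\otimes\Cof(\Gamma)$, together with the cellular filtration of $K\otimes_H K$ inherited from $K$, are compatible with $\Gal(\Gamma,\Gamma)$, and so that on the associated graded the map becomes a copy of the classical Galois map of a genuine normal extension, hence an isomorphism. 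One then concludes, by induction on the filtration degree and the Homotopy Five Lemma (Lemma~\ref{lem:h5}) applied at each stage — exactly as in the proof of Proposition~\ref{prop:hyp1} — that $\Gal(\Gamma,\Gamma)$ is a chain homotopy equivalence; the degreewise-$R$-flatness and retract hypotheses guarantee that the relevant short exact sequences are degreewise split and that the filtrations realise their colimits as sequential colimits of cofibrations of cofibrant objects in $\Ch_R$. I expect the bookkeeping needed to arrange the normal-basis splitting compatibly with the coalgebra structure at each filtration stage — so that the associated-graded comparison with the classical Galois map is legitimate — to be the technical heart of the argument.
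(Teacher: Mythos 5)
Your first two reductions coincide with the paper's: since every object of $\Ch_R$ is fibrant in the Hurewicz structure, Corollary~\ref{cor:htpic rel HA} reduces the claim to showing that $\Gal(\Gamma,\Gamma)\colon \Desc(\Gamma)\to\Hopf(\Delta_K,\bar\mu_K)$ is a copure weak equivalence, and Theorem~\ref{thm:copure} reduces that to showing it is a relative weak equivalence. (Your verification of the flatness hypotheses is more explicit than the paper's, but note one slip: to see that the cells $X(n)$ are degreewise $R$-flat you exhibit them as degreewise retracts of the subquotients $H\otimes X(n)$ of the module $N$ of which $K$ is a retract, yet $N$ itself is not known to be degreewise $R$-flat, so this does not yet follow from the degreewise $R$-flatness of $K$.)

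The genuine gap is in the last and central step. You do not prove that $\Gal(\Gamma,\Gamma)\colon K\otimes_H K\to K\otimes\Cof(\Gamma)$ is a relative weak equivalence; you outline a filtration-plus-Homotopy-Five-Lemma induction and explicitly defer ``the technical heart,'' namely arranging the normal-basis splitting compatibly with the coalgebra structure at each filtration stage. That deferral hides the real difficulty: a cellularly $r$-split filtration of $K$ as a left $H$-module is produced by purely module-theoretic means and bears no a priori relation to the comultiplication $\Delta_K$ or to the right $\Cof(\Gamma)$-comodule structure, so there is no reason the associated graded of $\Gal(\Gamma,\Gamma)$ should be the classical Galois map of a normal extension, and the induction as sketched does not get off the ground. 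The paper closes this step differently and more cheaply: by the normal basis theorem of Milnor and Moore (\cite[Theorem 4.4]{milnor-moore}, as recalled in Example~\ref{example:normal}), if $K$ admits a cellularly $r$-split filtration as a left $H$-module then $K\cong H\otimes\Cof(\Gamma)$ as a left $H$-module and right $\Cof(\Gamma)$-comodule, whence $\Gal(\Gamma,\Gamma)$ is an honest \emph{isomorphism} rather than merely a homotopy equivalence; the general relative-cofibrant case is then handled by the observation that retracts of isomorphisms are isomorphisms. No filtration induction or appeal to Lemma~\ref{lem:h5} is needed for this lemma (in contrast to the proof of Proposition~\ref{prop:hyp1}), so to complete your argument you should replace the sketched induction by this algebraic normal-basis input.
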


\begin{proof} By Corollary \ref{cor:htpic rel HA}, $\Gamma$ is a homotopic relative Hopf algebra if 
$$\Gal (\Gamma, \Gamma)\colon   \Desc(\Gamma) \to \Hopf(\Delta_{K}, \bar\mu_{K})$$ 
is a copure weak equivalence, since $R$ is fibrant in $\Ch_R$.  On the other hand, since $K$ is $H$-cofibrant, the left $H$-modules underlying $\Desc(\Gamma)$ and $\Hopf(\Delta_{K}, \bar\mu_{K})$, which are $K\otimes _{H}K$ and $K\otimes \Cof (\Gamma)$, are also cofibrant by Remark \ref{rmk:cofib-tens}.   

Theorem \ref{thm:copure} in the case $A=B=R$ implies that it suffices therefore to prove that $\Gal(\Gamma, \Gamma)$ is a relative weak equivalence.  It follows from the discussion of relative Hopf algebras in Example \ref{example:normal} that if $K$  admits a cellularly $r$-split filtration as an $H$-module, then  $\Gal(\Gamma, \Gamma)$ is actually an isomorphism by \cite[Theorem 4.4]{milnor-moore}.  Since retracts of isomorphisms are isomorphisms, it follows that $\Gal(\Gamma, \Gamma)$ is an isomorphism whenever $K$ is relative cofibrant as a left $H$-module.
\end{proof}

For any nice enough homotopic relative Hopf algebra, one can construct  homotopy-theoretic analogues of the ``normal basis'' extension in Example \ref{example:normal}.  Given a morphism of bialgebras $\Gamma:H\to K$, if $K$ admits a cellularly $r$-split filtration
$$0=F_{-1}K\subseteq F_{0}K\subseteq \cdots \subseteq F_{n}K\subseteq F_{n+1}K \subseteq \cdots$$
as left $H$-modules, notice that each $F_{n}K$ is also a left $K$-comodule and that the colimit respects the left $K$-comodule structure.

\begin{proposition}\label{prop:hyp2} Let $H$ and $K$ be degreewise $R$-flat bialgebras in $\Ch_{R}$. Let $\Gamma\colon   H\to K$ be a homotopic relative Hopf algebra  in $\Ch_R$ such that as a left $H$-module, $K$ admits a cellularly $r$-split filtration  and such that $\Gamma$ admits a retraction $K\to H$ that is a morphism of $H$-modules and $K$-comodules. Let $E$ be a $K$-comodule algebra in $\Ch_{R}$ that is degreewise $R$-flat. 

If $A=\Om(E;K;H)$, $B=\Om(E;K;K)$,  and $\vp=\Om (E;K; \Gamma)$, then 
$$(\Gamma, \vp)\colon  (H,A) \to (K,B)$$
is a homotopic relative  Hopf-Galois extension, and $\vp$ satisfies effective homotopic descent.

In particular, for any degreewise $R$-flat Hopf algebra $K$ in $\Ch_{R}$,  
$$\big(\Gamma, \Om (E;K;\eta)\big): \big(R,\Om(E;K;R)\big)\to \big(K, \Om (E;K;K)\big)$$
is a homotopic relative Hopf-Galois extension, and $\Om (E;K;\eta)$ satisfies homotopic descent, where $\eta:R\to K$ denotes the unit of $K$.
\end{proposition}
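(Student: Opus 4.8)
The plan is to verify the two conditions of Proposition \ref{prop:hhg conditions} for $\Ch_{R}$ equipped with the Hurewicz structure — which does satisfy Convention \ref{conv:hhg}, by Proposition \ref{prop:ch-special-modules}(1), Theorem \ref{thm:hkrs}, Proposition \ref{prop:comodule} and Theorem \ref{thm:comod-alg} — namely that $\vp^{\mathrm{hco}\Gamma}$ is a weak equivalence and that $\Gal(\Gamma,\vp)$ is a copure weak equivalence; the effective-homotopic-descent assertion will come out of the same analysis through Theorem \ref{thm:homotopic descent for chains}.

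The linchpin is the identity $A\cong\Gamma^{*}(B)$ of $H$-comodule algebras. Indeed, Proposition \ref{prop:R_X dualizable} expresses the right adjoint of the change-of-corings adjunction $\Gamma_{*}\dashv\Gamma^{*}$ between $K$- and $H$-comodule algebras as $\Gamma^{*}(N)\cong N\cotensor_{K}H$, and a direct check (or the bar--cobar formalism) identifies $\Om(E;K;K)\cotensor_{K}H$ with $\Om(E;K;H)$, the $H$-coaction being the one induced by $\Delta_{H}$ on the final tensor factor; under this identification $\vp=\Om(E;K;\Gamma)$ is the counit of $\Gamma_{*}\dashv\Gamma^{*}$ at $B$. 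Since $B=\Om(E;K;K)$ is fibrant in $\Alg_{R}^{K}$ by Corollary \ref{cor:fib-repl} and $\Gamma^{*}$ is a right Quillen functor, $A=\Gamma^{*}(B)$ is fibrant in $\Alg_{R}^{H}$; hence $A^{\mathrm{hco}H}=A^{\mathrm{co}H}=\Gamma^{*}(B)^{\mathrm{co}H}$ and $B^{\mathrm{hco}K}=B^{\mathrm{co}K}\cong\Om(E;K;R)$ by Corollary \ref{cor:hcoH-model}. As the transpose of a counit is an identity, $\vp^{\mathrm{hco}\Gamma}$ is, up to the canonical isomorphism $\Gamma^{*}(B)^{\mathrm{co}H}\cong B^{\mathrm{co}K}$ of diagram (\ref{eqn:diag-triangle}), the identity of $\Om(E;K;R)$. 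This settles the first condition — in fact $\vp^{\mathrm{hco}\Gamma}$ is an isomorphism — so the first adjunction of Definition \ref{definition:hhge} is a Quillen equivalence by Proposition \ref{prop:resext-chain}.

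The remaining work, where the relative-Hopf-algebra hypotheses are consumed, is to analyze $B$ as a left $A$-module and to identify $\Gal(\Gamma,\vp)$. Using the retraction $r\colon K\to H$ and \cite[Theorem 4.4]{milnor-moore}, $K$ should decompose as $H\otimes\Cof(\Gamma)$ compatibly with the left $H$-action and the right $\Cof(\Gamma)$-coaction; concretely, the given cellularly $r$-split filtration $\{F_{n}K\}$ of $K$ as a left $H$-module — whose stages are sub-$K$-comodules, by the remark preceding the Proposition — has subquotients $F_{n}K/F_{n-1}K\cong H\otimes X(n)$, with $X(n)$ degreewise $R$-flat since it is a degreewise $R$-summand of the flat module $K$ (as $R$ is a degreewise summand of $H$ via its counit). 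Applying $\Om(E;K;-)$ turns this into a filtration of $B$ by left $A$-submodules with subquotients the free $A$-modules $A\otimes X(n)$ on the zero-differential degreewise-flat complexes $X(n)$, so that $B$ is flat-cofibrant as a left $A$-module; and the functoriality of the cobar construction sends $r$ to a left-$A$-linear retraction $\Om(E;K;r)\colon B\to A$ of $\vp$. Theorem \ref{thm:homotopic descent for chains} now yields that $\vp$ satisfies effective homotopic descent. Moreover $\Gal(\Gamma,\vp)$ is an \emph{isomorphism} of $B$-corings: this may be checked on underlying nondifferential graded modules, on which $B$ is left-$A$-free on $K\cong H\otimes\Cof(\Gamma)$ and $\Gal(\Gamma,\vp)$ becomes exactly the Galois isomorphism of a normal extension, as in Example \ref{example:normal} and in the lemma preceding the Proposition (via \cite[Theorem 4.4]{milnor-moore}). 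An isomorphism of corings is a copure weak equivalence, so the second adjunction of Definition \ref{definition:hhge} is a Quillen equivalence by Proposition \ref{prop:we-QE}, and Proposition \ref{prop:hhg conditions} completes the proof that $(\Gamma,\vp)$ is a relative homotopic Hopf-Galois extension.

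For the final assertion I would specialize to $H=R$, $\Gamma=\eta$, so that $\Cof(\eta)=K$: then $R$ is a Hopf algebra and $\eta$ is a homotopic relative Hopf algebra by the lemma preceding the Proposition (every chain complex is Hurewicz-cofibrant, and $K$ contains $R$ as a summand via $\ve_{K}$). The argument above applies with the simplifications that $A=\Om(E;K;R)=B^{\mathrm{co}K}=\eta^{*}(B)$ on the nose, $\vp^{\mathrm{hco}\eta}$ is an isomorphism, $\Desc(\vp)$ is a flat $B$-coring because $B=\Om(E;K;K)$ is nondifferentially left-$A$-free (so $-\otimes_{A}B$ preserves kernels, as in the proof of Proposition \ref{prop:om(a,h,h)}), and $\Gal(\eta,\vp)$ is again an isomorphism on underlying graded modules; effective homotopic descent follows, via Proposition \ref{prop:descent}, from the homotopy faithful flatness of $\Om(E;K;K)$ over $\Om(E;K;R)$, established by a tower-of-relative-fibrations argument in the spirit of Proposition \ref{prop:om(a,h,h)}. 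I expect the technical heart to be the third paragraph: verifying that $B$ is flat-cofibrant as a left $A$-module and identifying $\Gal(\Gamma,\vp)$, which rests on the relative Milnor--Moore decomposition of $K$ over $H$ together with a careful associated-graded analysis of the two-sided cobar construction; once $A\cong\Gamma^{*}(B)$ is available, the rest is essentially formal.
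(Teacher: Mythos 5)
Your argument is correct and its architecture coincides with the paper's: identify $A$ with $\Gamma^{*}(B)$ so that $A$ is fibrant and $\vp^{\mathrm{hco}\Gamma}$ is (up to canonical isomorphism) the identity of $\Om(E;K;R)$; handle the Galois map via the relative Milnor--Moore decomposition of $K$ over $H$; and deduce descent from Theorem \ref{thm:homotopic descent for chains} using the filtration and retraction that $\Om(E;K;-)$ inherits from $K$. The one place you diverge is the second Quillen equivalence: the paper observes that $\Gal(\Gamma,\vp)=\Om\big(E;K;\Gal(\Gamma,\Gamma)\big)$ is a relative weak equivalence and then invokes Theorem \ref{thm:we=QE for dg-corings}, whereas you upgrade $\Gal(\Gamma,\vp)$ to an honest isomorphism of $B$-corings (legitimate here, since the hypothesis that $K$ carries a cellularly $r$-split filtration over $H$ already makes $\Gal(\Gamma,\Gamma)$ an isomorphism, as in the lemma preceding the proposition) and conclude directly from Proposition \ref{prop:we-QE}; this is marginally stronger and spares you re-verifying the flatness and coaugmentation hypotheses of Theorem \ref{thm:we=QE for dg-corings}. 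You are also more explicit than the paper on a point it elides: Theorem \ref{thm:homotopic descent for chains} requires $B$ to be \emph{flat}-cofibrant over $A$, not merely relative cofibrant, and your observation that the cells $X(n)$ are degreewise $R$-flat, being degreewise $R$-summands of the degreewise $R$-flat module $K$, supplies exactly the missing justification.
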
  

\begin{proof} It is clear that $(\Gamma, \vp)$  is a morphism in $\Comodalg_{\Ch_{R}}$. Since $B$ is a fibrant object in $\Alg_{\Ch_{R}}^{K}$, and $\Gamma^{*}$ is a right Quillen functor, $A$ is a fibrant object in $\Alg_R^{H}$, as $A=\Gamma^{*}B$.  It follows that a model of
$$\vp^{\mathrm{hco}\Gamma}\colon   A^{\mathrm{hco}H}\to B^{\mathrm{hco}K}$$
is the identity on $ \Om(E; K;R)$, whence the first adjunction in Definition \ref{definition:hhge} is an actual equivalence of categories.  Moreover, since 
$$B\otimes _{A}B\cong \Om (E;K;K\otimes_{H}K)\quad\text{and}\quad B\otimes \Cof(\Gamma)\cong \Om \big(E;K;K\otimes (R\otimes _{H}K)\big),$$
it follows by Theorem \ref{thm:we=QE for dg-corings} that if $\Gamma\colon  H\to K$ is a homotopic relative Hopf algebra,  then $\Gal (\Gamma, \Gamma)$ is a relative weak equivalence, whence $\Gal (\Gamma, \vp)=\Om \big(E;K; \Gal (\Gamma, \Gamma)\big)$ is a relative weak equivalence and therefore that the second adjunction in Definition \ref{definition:hhge} is a Quillen equivalence, again by  Theorem \ref{thm:we=QE for dg-corings}.  We can thus conclude that $(\Gamma, \vp)$ is indeed a homotopic relative Hopf-Galois extension.

Since $K$ is relative cofibrant as a left $H$-module, $B$ is relative cofibrant as a left $A$-module.  To prove this, observe that the cellularly $r$-split filtration of $K$  as a left $H$-module induces a cellularly $r$-split filtration of $B$ as an $A$-module.  On other hand, the splitting $H\to K\to  H$ as left $K$-comodules and left $H$-modules induces a splitting $A\to B \to A$ of left $A$-modules, i.e., $B$  contains $A$ as a summand.

It follows that $\vp$ satisfies effective homotopic descent by Theorem \ref{thm:homotopic descent for chains}, since an argument similar to that in the proof of Proposition \ref{prop:hyp1} shows that $\vp$ is a chain homotopy equivalence because $\Gamma$ is. 
\end{proof}

Applying the homotopic normal basis construction, we establish a relative analogue of \cite[Proposition 12.1.8]{rognes} in the differential graded context.

\begin{proposition}\label{prop:rognes-analogue}  Let $H$ and $K$ be degreewise $R$-flat bialgebras in $\Ch_{R}$ such that $H$ is split-conilpotent. Let $\Gamma\colon   H\to K$ be a homotopic relative Hopf algebra  in $\Ch_R$ such that 
\begin{itemize}
\item as a left $H$-module, $K$ admits a cellularly $r$-split filtration,
\item $\Gamma$ admits a retraction $K\to H$ that is a morphism of $H$-modules and $K$-comodules, and
\item $\Cof(\Gamma)$ is degreewise $R$-flat.
\end{itemize}   
Let $(\Gamma, \vp)\colon  (H,A) \to (K,B)$ be a morphism in $\Comodalg_{\Ch_{R}}$ such that $B$ is degreewise $R$-flat.

If  $\vp^{\mathrm{hco}\Gamma}\colon A^{\mathrm{hco}H}\to B^{\mathrm{hco}K}$ is a relative weak equivalence, then
$(\Gamma, \vp)$ is a homotopic relative Hopf-Galois extension if and only if $\vp$ satisfies effective homotopic descent.

In particular, for any degreewise $R$-flat Hopf algebra $K$ and any morphism $(\eta, \vp)\colon  (R,A) \to (K,B)$ in $\Comodalg_{\Ch_{R}}$ such that $\vp^{\mathrm{hco}K}\colon A^{\mathrm{hco}K}\to B^{\mathrm{hco}K}$ is a relative weak equivalence, and $B$ is degreewise $R$-flat, $(\eta, \vp)$ is a homotopic relative Hopf-Galois extension if and only if $\vp$ satisfies effective homotopic descent.
\end{proposition}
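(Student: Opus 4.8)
The plan is to reduce the biconditional to a single Quillen-equivalence statement about the Galois adjunction, and then to settle that statement by comparing $(\Gamma,\vp)$ with a normal-basis extension of Proposition~\ref{prop:hyp2}, for which everything is already known. First, since $\vp^{\mathrm{hco}\Gamma}\colon A^{\mathrm{hco}H}\to B^{\mathrm{hco}K}$ is a relative weak equivalence of algebras in $\Ch_{R}$, Proposition~\ref{prop:resext-chain} shows that the first of the two adjunctions in Definition~\ref{definition:hhge}, namely $(\vp^{\mathrm{hco}\Gamma})_{*}\dashv(\vp^{\mathrm{hco}\Gamma})^{*}$, is a Quillen equivalence; hence $(\Gamma,\vp)$ is a relative homotopic Hopf-Galois extension if and only if the Galois adjunction $\Gal(\Gamma,\vp)_{*}\dashv\Gal(\Gamma,\vp)^{*}$ between $\VV_{B}^{\Desc(\vp)}$ and $\VV_{B}^{\Hopf(\rho_{B},\bar\mu_{K})}$ is a Quillen equivalence. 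It therefore suffices to show that, under the stated hypotheses, \emph{both} this Galois adjunction and the canonical adjunction $\Can_{\vp}\dashv\Prim_{\vp}$ witnessing effective homotopic descent are Quillen equivalences, since then both sides of the asserted biconditional are true.

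Next I would pass to a fibrant model. Replacing $A$ by $A^{f}=\Om(A;H;H)$ and $B$ by $B^{f}=\Om(B;K;K)$ via Corollary~\ref{cor:fib-repl} and lifting $\vp$ to $\vp^{f}$ produces a weak equivalence $(\Gamma,\vp)\to(\Gamma,\vp^{f})$ in $\Comodalg_{\Ch_{R}}^{\to}$. Proposition~\ref{prop:hyp2} applied with normal basis $E=B$ (which is degreewise $R$-flat) supplies the normal-basis extension $(\Gamma,\Om(B;K;\Gamma))\colon(H,\Om(B;K;H))\to(K,\Om(B;K;K))$, which is a relative homotopic Hopf-Galois extension for which $\Om(B;K;\Gamma)$ also satisfies effective homotopic descent; there is a commuting square in $\Comodalg_{\Ch_{R}}^{\to}$ from $(\Gamma,\vp^{f})$ to it that is the identity on $B^{f}=\Om(B;K;K)$ and is, on source algebras, the canonical map $\alpha^{f}\colon A^{f}\to\Gamma^{*}\Om(B;K;K)=\Om(B;K;H)$. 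The crucial step is to show that $\alpha^{f}$ is a relative weak equivalence: it is a morphism of fibrant $H$-comodule algebras, and since $\Om(A;H;H)^{\mathrm{co}H}\cong\Om(A;H;R)$ is a model of $A^{\mathrm{hco}H}$ (Remark~\ref{rmk:coinv}, Corollary~\ref{cor:hcoH-model}) while $\Om(B;K;H)^{\mathrm{co}H}\cong\Om(B;K;K)^{\mathrm{co}K}\cong\Om(B;K;R)$ is a model of $B^{\mathrm{hco}K}$, the morphism $(\alpha^{f})^{\mathrm{co}H}$ is a model of $\vp^{\mathrm{hco}\Gamma}$, hence a weak equivalence; as $H$ is degreewise $R$-flat and split-conilpotent, Proposition~\ref{prop:hyp1} then forces $\alpha^{f}$ itself to be a weak equivalence.

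Finally I would feed this square into the functoriality diagram of Remark~\ref{rmk:Galois functoriality}. Because the Hopf coring $\Om(B;K;K)\otimes\Cof(\Gamma)$ depends only on $B^{f}$ and $\Cof(\Gamma)$, the bottom row of that diagram is the identity; its top row is the extension-of-scalars adjunction along $\alpha^{f}$, a Quillen equivalence by Proposition~\ref{prop:resext-chain}; and its middle row, the change-of-corings adjunction along $\alpha^{f}$, is a Quillen equivalence by Proposition~\ref{prop:we-QE} together with Theorem~\ref{thm:copure}, using the degreewise $R$-flatness of $B$, $K$, and $\Cof(\Gamma)$ and the cellularly $r$-split filtration of $K$ over $H$ to see that the corings involved are flat, respectively coaugmented and flat-cofibrant, and that the induced coring map is a relative weak equivalence. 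Since $\Can_{\Om(B;K;\Gamma)}$ and $\Gal(\Gamma,\Om(B;K;\Gamma))_{*}$ are Quillen equivalences, commutativity of the diagram forces $\Can_{\vp^{f}}$ and $\Gal(\Gamma,\vp^{f})_{*}$ to be Quillen equivalences, and the same functoriality argument transfers this conclusion back to $\Can_{\vp}$ and $\Gal(\Gamma,\vp)_{*}$; the displayed special case is the instance $H=R$, where $A^{\mathrm{hco}R}=A$ and the first adjunction is an identity. The main obstacle I anticipate is the bookkeeping around these flatness and cofibrancy conditions --- verifying that the degreewise-$R$-flatness hypotheses genuinely place the intermediate corings within the scope of Theorems~\ref{thm:copure} and~\ref{thm:we=QE for dg-corings} --- together with the precise identification of $(\alpha^{f})^{\mathrm{co}H}$ with a model of $\vp^{\mathrm{hco}\Gamma}$ in the homotopy category of algebras.
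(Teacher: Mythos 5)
Your overall architecture --- disposing of the first adjunction in Definition \ref{definition:hhge} via Proposition \ref{prop:resext-chain}, building the comparison map $\alpha$ into the normal-basis extension of Proposition \ref{prop:hyp2} by means of Proposition \ref{prop:hyp1} and split-conilpotency, and then running the functoriality diagram of Remark \ref{rmk:Galois functoriality} --- is exactly the paper's, and those parts are sound. But there is a fatal logical error in how you extract the conclusion from the diagram. You announce that it ``suffices to show that \emph{both} the Galois adjunction and the canonical adjunction are Quillen equivalences, since then both sides of the asserted biconditional are true,'' and you later claim that ``commutativity of the diagram forces $\Can_{\vp^{f}}$ and $\Gal(\Gamma,\vp^{f})_{*}$ to be Quillen equivalences.'' Neither claim is correct. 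The hypotheses only guarantee that $\vp^{\mathrm{hco}\Gamma}$ is a weak equivalence; they do not force the Galois condition or descent to hold. (In the special case $H=R$ your reduction would say that every $\vp$ with $A\simeq B^{\mathrm{hco}K}$ is automatically a Hopf--Galois extension, which is false and would trivialize the proposition.)

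Concretely, in each square of the functoriality diagram you know that one horizontal adjunction and the right-hand vertical adjunction are Quillen equivalences, but the left-hand vertical adjunction ($\Can_{\vp}$, resp.\ $\Gal(\Gamma,\vp)_{*}$) and the \emph{middle} horizontal adjunction $\bigl(\tilde\rho_{B},\Desc(\alpha,\tilde\rho_{B})\bigr)_{*}$ are both unknown. Two-out-of-three for Quillen equivalences applied to the commuting composites yields only that the left vertical is a Quillen equivalence if and only if the middle horizontal one is; it cannot produce either unconditionally, because the middle adjunction is governed by a coring map comparing $B\otimes_{A}B$ with $B'\otimes_{A'}B'$, and its being a (copure) weak equivalence is essentially the Galois/descent condition you are trying to characterize. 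The correct conclusion --- and the one the paper draws --- is precisely the chain ``$\Can_{\vp}$ is a Quillen equivalence $\iff$ the middle adjunction is $\iff$ $\Gal(\Gamma,\vp)_{*}$ is,'' which together with your (correct) treatment of the first adjunction gives the stated biconditional. As a secondary point, your shortcut of making the bottom row the identity by first replacing $B$ with $B^{f}$ merely postpones work: transferring the conclusion back from $\vp^{f}$ to $\vp$ requires showing that the adjunction induced by $(\tilde\rho_{B},\theta_{\Id_{K},\tilde\rho_{B}})$ is a Quillen equivalence, which is the copurity argument (via Theorem \ref{thm:copure}, Proposition \ref{prop:om(a,h,h)}, and Corollary \ref{cor:QE conditions}) that the paper carries out explicitly.
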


\begin{proof} Our strategy in this proof is to exploit a comparison of $(\Gamma, \vp)$ with a homotopic normal basis extension. Note first that since  the coring $(R, K)$ is flat, by Proposition \ref{prop:R_X dualizable}, the functor $\Gamma^{*}\colon  \Alg^{K}\to \Alg^{H}$ is isomorphic to the cotensor product functor $-\cotensor_{K}H$.

Let $\Om(A;H;H)$ and $\Om(B;K;K)$ be the fibrant replacements of $A$ in $\Alg_R^{H}$ and of $B$ in $\Alg_R^{K}$ given by Corollary \ref{cor:hcoH-model}. Recall formula (\ref{eqn:phi-hcogamma}) for $\vp^{\mathrm{hco}\Gamma}$. Since by hypothesis $\vp^{\mathrm{hco}\Gamma}$ is a weak equivalence, and $H$ is split-conilpotent, Proposition \ref{prop:hyp1}  implies that the composite
$$\Om (A;H;H)\to \Om (\Gamma_{*}A;K;K)\cotensor_{K}{H}\to \Om (B;K;K)\cotensor_{K}{H}\cong \Om(B;K;H)$$
is also a relative weak equivalence.  Precomposing with  $\tilde\rho_{A}\colon  A\xrightarrow\sim  \Om (A;H;H)$, we obtain a relative weak equivalence of $H$-comodule algebras 
$$\alpha\colon  A \xrightarrow \sim \Om(B;K;H).$$

Set $A'=\Om (B;K;H)$, $B'=\Om (B;K;K)$, and 
$$\vp'=\Om (B;K;\Gamma)\colon    \Gamma_{*}A'\to B'.$$    
Proposition \ref{prop:hyp2} implies that $(\Gamma, \vp')\colon  (H,A')\to (K, B')$ is itself a homotopic relative Hopf-Galois extension and that $\vp'$ satisfies effective homotopic descent.

By Remark \ref{rmk:Galois functoriality},  the commuting diagram of comodule algebra morphisms
$$\xymatrix{(H,A) \ar[d]_{(\Id_{H}, \alpha)}\ar[r]^{(\Gamma, \vp)}&(K,B)\ar[d]^{(\Id_{K}, \tilde \rho_{B})}\\
(H,A') \ar[r]^{(\Gamma, \vp')}&(K,B')}$$
gives rise to a commuting diagram of functors 
$$\xymatrix{(\Ch_R) _{A}^{}\ar @<0.5ex>[rrr]^{\alpha_{*}}\ar @<-0.5ex>[dd]_{\Can_{\vp}}&&&(\Ch_R) _{A'}^{}\ar @<0.5ex>[lll]^{\alpha^{*}}\ar @<-0.5ex>[dd]_{\Can_{\vp'}}\\
\\
(\Ch_R)_{B}^{\Desc(\vp)}\ar @<-0.5ex>[uu]_{\Prim_{\vp}}\ar@<0.5ex>[rrr]^{\big(\tilde\rho_{B},\Desc (\alpha, \tilde\rho_{B})\big)_{*}}\ar @<-0.5ex>[dd]_{\Gal(\Gamma, \vp)_{*}}&&&(\Ch_R) _{B'}^{\Desc(\vp')}\ar @<0.5ex>[lll]^{\big(\tilde\rho_{B},\Desc (\alpha, \tilde\rho_{B})\big)^{*}}\ar @<-0.5ex>[dd]_{\Gal(\Gamma, \vp')_{*}}\ar @<-0.5ex>[uu]_{\Prim_{\vp'}}\\
\\
(\Ch_R) _{B}^{\Hopf(\rho_{B},\bar\mu_{K})}\ar @<-0.5ex>[uu]_{\Gal(\Gamma, \vp)^{*}}\ar @<0.5ex>[rrr]^{(\tilde\rho_{B}, \theta_{\Id_{K}, \tilde\rho_{B}})_{*}}&&&(\Ch_R) _{B'}^{\Hopf(\rho_{B'},\bar\mu_{K})},\ar @<0.5ex>[lll]^{(\tilde \rho_{B}, \theta_{\Id_{K}, \tilde\rho_{B}})^{*}}\ar @<-0.5ex>[uu]_{\Gal(\Gamma, \vp')^{*}}
}$$
where the $B'$-bimodule map underlying $\theta_{\Id_{K}, \tilde\rho_{B}}$,
$$B'\otimes \Cof (\Gamma)\otimes _{B}B'\to B'\otimes \Cof (\Gamma),$$
is given by the right $B'$-action on $B'\otimes \Cof(\Gamma)$ (cf.~Definition \ref{defn:hopf}).  Note that $B'\otimes \Cof (\Gamma)\otimes _{B}B'\to B'$ is flat  and $B'\otimes \Cof(\Gamma)$ is flat-cofibrant as a left $B'$-module, since $\Cof(\Gamma)$ is degreewise $R$-flat. Moreover $\theta_{\Id_{K}, \tilde\rho_{B}}$ is a  relative weak equivalence, since the canonical isomorphism 
$$B'\otimes \Cof (\Gamma)\otimes _{B}B\xrightarrow \cong B'\otimes \Cof (\Gamma)$$ 
factors as
$$B'\otimes \Cof (\Gamma)\otimes _{B}B\xrightarrow{B'\otimes \Cof(\Gamma) \otimes \tilde\rho_{B}} B'\otimes \Cof (\Gamma)\otimes _{B}B'\xrightarrow {\theta_{\Id_{K}, \tilde\rho_{B}}}  B'\otimes \Cof (\Gamma),$$
where the first map is a relative weak equivalence by Remark \ref{rmk:coinv} because it is equal to 
$$\tilde \rho_{B'\otimes \Cof (\Gamma)}\colon  B'\otimes \Cof (\Gamma) \to \Om \big( B'\otimes \Cof (\Gamma);K;K\big).$$
It follows that $\theta_{\Id_{K}, \tilde\rho_{B}}$ is a copure weak equivalence by  Theorem \ref{thm:copure}. 

Proposition \ref{prop:resext-chain} implies that the top adjunction is a Quillen equivalence, since $\alpha$ is a relative weak equivalence. The two vertical adjunctions on the right side of the diagram are Quillen equivalences, by Proposition \ref{prop:hyp2}.  Since $B'$ is homotopy faithfully flat as a $B$-module by Proposition \ref{prop:om(a,h,h)}, and $\theta_{\Id_{K}, \tilde\rho_{B}}$ is a copure weak equivalence, Corollary \ref{cor:QE conditions} implies that the bottom adjunction is a Quillen equivalence as well.

A  ``two-out-of-three'' argument enables us to conclude that $(\Gamma, \vp)\colon  (H,A) \to (K,B)$ is a relative homotopic Hopf-Galois extension if and only if $\vp\colon  A\to B$ satisfies effective homotopic descent.
\end{proof}

\begin{remark} We believe that it should be possible to generalize the strategy in the proof above to many other monoidal model categories, establishing an equivalence between homotopic Hopf-Galois extensions and morphisms satisfying effective homotopic descent when the induced map on the coinvariants is a weak equivalence.   The key to the proof is the existence of a well-behaved construction, replacing any (nice enough) morphism of comodule algebras by a weakly equivalent morphism of comodule algebras that is a homotopic Hopf-Galois extension and that satisfies effective homotopic descent.  A ``homotopic normal extension'' of the sort employed in the proof above should do the trick in monoidal model categories with compatible simplicial structure.
\end{remark}

The close relationship between homotopic Hopf-Galois extensions and morphisms satisfying effective homotopic descent enables us to include the notion of Koszul duality in our general picture as well. 

\begin{proposition}\label{prop:koszul} Let $(\Gamma, \vp)\colon  (H,A) \to (K,B)$ be a relative homotopic Hopf-Galois extension in $\Comodalg_{\Ch_R}$  such that $\vp\colon A \to B$ satisfies effective homotopic descent, $\Cof(\Gamma)$  and $B$ are degreewise $R$-flat, and $B$ is augmented.  

If the unit map $\eta:R \to B$ is a chain homotopy equivalence, then $\Cof (\Gamma)$ is a \emph{generalized Koszul dual} of $A$, in the sense that homotopy category of right $A$-modules is equivalent to the homotopy category of right $\Cof(\Gamma)$-comodules.
\end{proposition}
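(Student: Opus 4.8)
The plan is to realize $\Cof(\Gamma)$ as a Koszul dual of $A$ by splicing together three Quillen equivalences of comodule categories, in the spirit of the proof of Proposition~\ref{prop:rognes-analogue}: the descent adjunction of $\vp$, the Galois adjunction of $(\Gamma,\vp)$, and a third adjunction that collapses the algebra $B$ occurring in the Hopf coring down to $R$. The first two are immediate from the hypotheses. Since $\vp\colon A\to B$ satisfies effective homotopic descent (with respect to the trivial coring $A$, so that $(\Ch_{R})_{A}^{A}\cong(\Ch_{R})_{A}$ and $\vp_{*}(A)=\Desc(\vp)$), the canonical adjunction $\Can_{\vp}\dashv\Prim_{\vp}$ between $(\Ch_{R})_{A}$ and $(\Ch_{R})_{B}^{\Desc(\vp)}$ is a Quillen equivalence by Definition~\ref{defn:homotopic descent}. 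Since $(\Gamma,\vp)$ is a relative homotopic Hopf-Galois extension, the second adjunction of Definition~\ref{definition:hhge}, $\Gal(\Gamma,\vp)_{*}\dashv\Gal(\Gamma,\vp)^{*}$ between $(\Ch_{R})_{B}^{\Desc(\vp)}$ and $(\Ch_{R})_{B}^{\Hopf(\rho_{B},\bar\mu_{K})}$, is a Quillen equivalence as well.

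The heart of the argument is the construction of the third equivalence, which compares the Hopf coring $\Hopf(\rho_{B},\bar\mu_{K})=\big(B,\,B\otimes\Cof(\Gamma)\big)$ over $B$ with the coring $\big(R,\Cof(\Gamma)\big)$ over $R$. I would observe that the triple $(\eta_{K},\eta_{B},\Id_{\Cof(\Gamma)})$, where $\eta_{K}\colon R\to K$ is the unit of $K$ and $\eta_{B}\colon R\to B$ is the unit of $B$, constitutes a morphism in $\Pair_{\Ch_{R}}$ from $\big(R,(R,\mathrm{triv}),(\Cof(\Gamma),\mathrm{triv})\big)$ to $\big(K,(B,\rho_{B}),(\Cof(\Gamma),\bar\mu_{K})\big)$, and that applying the Hopf functor to it yields a morphism of corings
\[
(\eta_{B},f)\colon \big(R,\Cof(\Gamma)\big)\longrightarrow\Hopf(\rho_{B},\bar\mu_{K})=\big(B,\,B\otimes\Cof(\Gamma)\big),
\]
whose $B$-coring component $f\colon (\eta_{B})_{*}\Cof(\Gamma)=B\otimes\Cof(\Gamma)\otimes B\to B\otimes\Cof(\Gamma)$ is the $B$-bilinearization of $\eta_{B}\otimes\Id_{\Cof(\Gamma)}$, explicitly $b\otimes x\otimes b'\mapsto(b\otimes x)\triangleleft b'$ for the right $B$-action $\triangleleft$ that is part of the Hopf coring structure.

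I would then verify that $(\eta_{B},f)$ satisfies the hypotheses of Corollary~\ref{cor:QE conditions}. First, $B$ is homotopy faithfully flat as a left $R$-module: being degreewise $R$-flat, $B$ is flat-cofibrant, hence flat and in particular strongly homotopy flat, as a left $R$-module (Proposition~\ref{prop:ch-special-modules}(2)); and since $B$ is augmented, the unit $\eta_{B}$ is split by the augmentation $\ve_{B}\colon B\to R$, so $B$ contains $R$ as a retract and is homotopy faithful (Proposition~\ref{prop:ch-special-modules}(3)). Second, $f$ is a copure weak equivalence, by Theorem~\ref{thm:copure}: its domain $B\otimes\Cof(\Gamma)\otimes B$, being flat-cofibrant as a left $B$-module since $\Cof(\Gamma)\otimes B$ is degreewise $R$-flat, is a flat $B$-coring; its codomain $B\otimes\Cof(\Gamma)$ is coaugmented (via the coaugmentation of $\Cof(\Gamma)=R\otimes_{H}K$) and flat-cofibrant as a left $B$-module since $\Cof(\Gamma)$ is degreewise $R$-flat; and $f$ is a relative weak equivalence, being, under the evident identifications, chain homotopic to $(B\otimes\Cof(\Gamma))\otimes_{R}\ve_{B}$, which is a chain homotopy equivalence because $\ve_{B}$ is a chain homotopy inverse of $\eta_{B}$ (indeed $\ve_{B}\eta_{B}=\Id_{R}$ and $\eta_{B}$ is a chain homotopy equivalence by hypothesis). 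Corollary~\ref{cor:QE conditions} then gives that the induced adjunction $(\eta_{B},f)_{*}\dashv(\eta_{B},f)^{*}$ between $\Ch_{R}^{\Cof(\Gamma)}$ and $(\Ch_{R})_{B}^{\Hopf(\rho_{B},\bar\mu_{K})}$ is a Quillen equivalence.

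Finally, composing the three resulting equivalences of homotopy categories,
\[
\Ho\big((\Ch_{R})_{A}\big)\;\simeq\;\Ho\big((\Ch_{R})_{B}^{\Desc(\vp)}\big)\;\simeq\;\Ho\big((\Ch_{R})_{B}^{\Hopf(\rho_{B},\bar\mu_{K})}\big)\;\simeq\;\Ho\big(\Ch_{R}^{\Cof(\Gamma)}\big),
\]
yields the asserted equivalence between the homotopy category of right $A$-modules and the homotopy category of right $\Cof(\Gamma)$-comodules. I expect the main obstacle to be this third step: pinning down the coring morphism $(\eta_{B},f)$ and checking that $f$ is a copure weak equivalence — in particular, that it is a chain homotopy equivalence and that the flatness hypotheses of Theorem~\ref{thm:copure} are satisfied — while the first and last equivalences merely reformulate the hypotheses and the closing composition is formal.
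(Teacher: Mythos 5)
Your proposal is correct and follows essentially the same route as the paper's proof: the same chain of three Quillen equivalences, with the third obtained by applying the Hopf functor to the unit $\eta_B\colon R\to B$ and invoking Theorem \ref{thm:copure} and Corollary \ref{cor:QE conditions}. The only (inessential) deviations are that you change the bialgebra along $\eta_K$ in the $\Pair$-morphism where the paper keeps it fixed at $K$, and that you justify the strong homotopy flatness of $B$ over $R$ via flat-cofibrancy (which for unbounded complexes is not literally automatic from degreewise flatness) rather than the paper's direct observation that homotopy flatness over $R$ is free in the Hurewicz structure and degreewise flatness handles finite limits.
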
 

\begin{proof} Since $(\Gamma, \vp)\colon  (H,A) \to (K,B)$ is a relative homotopic Hopf-Galois extension, and $\vp\colon A \to B$ satisfies effective homotopic descent, there is a chain of Quillen equivalences
$$\xymatrix{  (\Ch_R) _{A} \ar@<1ex>[rr]^-{\Can_{\vp}} && (\Ch_R)_{B}^{\Desc(\vp)} \ar@<1ex>[ll]^-{\Desc(\vp)} \ar@<1ex>[rr]^-{\Gal(\Gamma, \vp)_{*}} && (\Ch_R) _{B}^{\Hopf(\rho_{B},\bar\mu_{K})}. \ar@<1ex>[ll]^-{\Gal(\Gamma, \vp)^{*}}}$$
The unit map $\eta:R \to B$ induces a morphism in $\Pair_{\VV}$
$$(\Id_{K}, \eta, \Id_{\Cof(\Gamma)}): \big(K,R, \Cof (\Gamma)\big) \to \big(K,B, \Cof (\Gamma)\big)$$
and therefore a morphism of corings with underlying morphism of chain complexes
$$\Id_{B} \otimes \bar \mu _{K}: B\otimes \Cof (\Gamma)\otimes B \to B\otimes \Cof (\Gamma),$$
which is a relative weak equivalence, since $\eta$ is a chain homotopy equivalence, and $\bar \mu_{K}(\Id_{\Cof(\Gamma)}\otimes \eta)=\Id_{\Cof(\Gamma)}$.   Since $\Cof(\Gamma)$ is degreewise $R$-flat, the source of $\Id_{B} \otimes \bar \mu _{K}$ is flat and its target  flat-cofibrant as a left $B$-module.  It follows that $\Id_{B} \otimes \bar \mu _{K}$ is a copure weak equivalence of $B$-corings.   Moreover, $B$  homotopy faithfully flat as an $R$-module.  It is strongly homotopy flat over $R$, since degreewise $R$-flat by hypothesis, and all $R$-modules are homotopy flat, and it is homotopy faithful as a $R$-module, since it is augmented.  Corollary \ref{cor:QE conditions} therefore implies that
$$\bigadjunction{(\Ch_R)_{R}^{\Cof(\Gamma)}}{(\Ch_R)_{B}^{\Hopf(\rho_{B},\bar\mu_{K})}}{}{}$$
is a Quillen equivalence, whence
$$\Ho \big((\Ch_R) _{A}\big)\simeq \Ho \big( (\Ch_R)^{\Cof(\Gamma)}\big)$$
as desired.
\end{proof}

The connection between Koszul duality and homotopic Hopf-Galois extensions hinted at here will be explored further in a forthcoming paper.
    
\subsection{Principal fibrations and Hopf-Galois extensions}

We show in this section that simplicial principal fibrations naturally give rise to homotopic Hopf-Galois extensions in $\Ch_{R}$, confirming that our definition is reasonable.   The example elaborated here generalizes \cite[Example 4.4.7]{karpova}.

In this section, for any simplicial set $X$, $C_{*}X$ denotes the normalized chains on $X$ with coefficients in $R$, which admits a natural coalgebra structure.  For any reduced simplicial set $X$, we let $\G  X$ denote the Kan loop group construction on $X$ \cite[\S 27]{may}.   Recall that the geometric realization $|\G  X|$ has the homotopy type of $\Om |X|$, the based loop space on the geometric realization of $|X|$.

We recall from \cite{hpst} that if $X$ is a simplicial set that is $1$-reduced (i.e., $X_{0}$ and $X_{1}$ are both singletons), then the differential graded algebra $\Om C_{*}X$ obtained by applying the reduced cobar construction to $C_{*}X$ admits a natural comultiplication 
$$\psi_{X}:\Om C_{*}X\to \Om C_{*}X\otimes \Om C_{*}X$$ 
endowing $\Om C_{*}X$ with the structure of a bialgebra in $\Ch_{R}$.  This comultiplicative structure is topologically meaningful, in the sense that the natural morphism of differential graded algebras 
$$\alpha_{X}:\Om C_{*}X \to C_{*}\G  X$$
first defined by Szczarba \cite{szczarba} is naturally strongly homotopy comultiplicative with respect to $\psi_{X}$ and to the usual comultiplication on $C_*(\G  X)$, i.e., it gives rise to a natural morphism of differential graded algebras
$$\beta_{X}:\Om^{2}C_{*}X \to \Om C_{*}\G  X.$$
Moreover, as shown in \cite{hess-tonks}, the chain map underlying $\alpha_{X}$ admits a natural chain homotopy inverse.  The existence of both the higher homotopies for the strongly homotopy comultiplicative structure of $\alpha _{X}$ and the chain inverse to $\alpha_{X}$ follow from acyclic models arguments, whence $\beta_{X}$ must also admit a natural chain homotopy inverse.

Let $X$ be a $2$-reduced simplicial set, and $Y$ a $1$-reduced simplicial set equipped with a twisting function $\tau: Y \to \G  X$.    Let $j: \G X \to \G  X\times_{\tau}Y$ denote the inclusion of $\G X$ into the twisted cartesian product of $\G X$ and $Y$ determined by the twisting function $\tau$ \cite[\S 18]{may}.  Let 
$$\vp=\Om C_{*}j: \Om C_{*}\G X \to \Om C_{*}(\G X\times_{\tau}Y),$$
which is a morphism of bialgebras. Note that the projection map $\G X\times_{\tau}Y\to Y$ gives rise to the structure of a $\Om C_{*}Y$-comodule algebra on $\Om C_{*}(\G  X\times_{\tau }Y)$.

\begin{proposition}\label{prop:prin-fib}  If $X$ is a simplicial double suspension, and $Y$ is a simplicial suspension \cite [\S 27]{may}, then for every simplicial map $g:Y\to X$, the morphism of comodule algebras
$$(R, \Om C_{*}\G  X) \to \big(\Om C_{*}Y, \Om C_{*}(\G X\times_{\tau}Y)\big),$$
where $\eta: R\to \Om C_{*}Y$ denotes the unit map and $\tau=\tau_{X}g$ with $\tau_{X}:X\to \G X$ the universal twisting function, is a homotopic Hopf-Galois extension.
\end{proposition}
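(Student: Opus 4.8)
The plan is to verify the two sufficient conditions of Proposition~\ref{prop:hhg conditions}, with $H=R$ and $K=\Om C_*Y$. First I would check that all the objects in sight are legitimate. Since $X$ is a simplicial double suspension it is $2$-reduced, so the Kan loop group $\G X$ has trivial $0$- and $1$-simplices, and hence so does the twisted cartesian product $E:=\G X\times_\tau Y$, because $Y$ is $1$-reduced; thus $\G X$ and $E$ are $1$-reduced, the coaugmented coalgebras $C_*Y$, $C_*E$, $C_*\G X$ are degreewise $R$-free, and the reduced cobar constructions $\Om C_*Y$, $\Om C_*E$, $\Om C_*\G X$ are defined and degreewise $R$-free, hence degreewise $R$-flat. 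By \cite{hpst}, $K:=\Om C_*Y$ is a bialgebra; being connected (as $Y$ is reduced) it is a Hopf algebra in the sense of Example~\ref{example:normal}. Writing $A:=\Om C_*\G X$, $B:=\Om C_*E$, $\vp:=\Om C_*j$ and $p\colon E\to Y$ for the bundle projection, the composite $p\circ j$ is the constant map at the basepoint of $Y$, so the $K$-coaction that $A$ inherits through $\vp$ is trivial and $(\eta,\vp)\colon(R,A)\to(K,B)$ is a morphism of the required form in $\Comodalg_{\Ch_R}$. It therefore remains to show (a) that $\vp^{\mathrm{hco}\eta}\colon A^{\mathrm{hco}R}\to B^{\mathrm{hco}K}$ is a chain homotopy equivalence and (b) that $\Gal(\eta,\vp)\colon\Desc(\vp)\to\Hopf(\rho_B,\bar\mu_K)$ is a copure weak equivalence.

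For (a), observe that since $H=R$ every object of $\Alg_R^R=\Alg_R$ is fibrant, so $A^{\mathrm{hco}R}=A=\Om C_*\G X$, whereas Corollaries~\ref{cor:fib-repl} and~\ref{cor:hcoH-model} identify $B^{\mathrm{hco}K}$ with $\Om(B;K;R)=\Om\bigl(\Om C_*E;\Om C_*Y;R\bigr)$; unwinding formula~\eqref{eqn:phi-hcogamma} in the case $H=R$ shows that $\vp^{\mathrm{hco}\eta}$ is the natural map $\Om C_*\G X\to\Om\bigl(\Om C_*E;\Om C_*Y;R\bigr)$ induced by $\vp$ (sending $a$ to $\vp(a)\otimes1$). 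The heart of the matter is then the claim that the two-sided cobar construction $\Om(\Om C_*E;\Om C_*Y;R)$ is a cobar--Eilenberg--Moore model for the fibre $\G X$ of the fibration $E\to Y$. I would establish a natural chain homotopy equivalence $\Om(\Om C_*E;\Om C_*Y;R)\simeq\Om C_*\G X$ under $A$ by combining the Szczarba-type chain-level comparison for the twisted cartesian product $E=\G X\times_\tau Y$ over the simply connected base $Y$ \cite{szczarba} with the compatibility of the higher coalgebra structures on cobar constructions from \cite{hpst} and the acyclic-models homotopies of \cite{hess-tonks}, organized as in Karpova's treatment \cite{karpova}; this forces $\vp^{\mathrm{hco}\eta}$ to be a chain homotopy equivalence.

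For (b), the target $\Hopf(\rho_B,\bar\mu_K)$ has underlying $B$-coring $B\otimes\Cof(\eta)=B\otimes K$, which is coaugmented since $K$ is connected and flat-cofibrant as a left $B$-module since $K$ is degreewise $R$-flat (cf.\ Remark~\ref{rmk:cofib-tens}, filtering $B\otimes K$ by the degree truncations of $K$), while the descent coring $\Desc(\vp)=(B,B\otimes_AB)$ is flat because $\Om C_*j$ is a split monomorphism of graded modules, so that $B$ is free, hence flat, as a left $A$-module via $\vp$ and $-\cotensor_B(B\otimes_AB)\cong-\tensor_AB$ is exact. By Theorem~\ref{thm:copure} it then suffices to show that $\Gal(\eta,\vp)=\beta_\vp\colon B\otimes_AB\to B\otimes K$ is a chain homotopy equivalence. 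This is the cobar incarnation of the fact that $\G X\to E\to Y$ is a principal fibration, for which the division map $E\times_YE\xrightarrow{\ \cong\ }E\times\G X$ is an isomorphism; transporting this across the Eilenberg--Moore comparisons for the two-sided cobar, again following \cite{szczarba}, \cite{hpst}, \cite{hess-tonks} and \cite{karpova}, yields the required chain homotopy equivalence. The hypotheses that $Y$ is a simplicial suspension and $X$ a simplicial double suspension enter here (and in (a)): they make the reduced diagonals on $C_*Y$ and $C_*X$ vanish, so that $\Om C_*Y$ and $\Om C_*X$ are free differential graded algebras on chain complexes, which is what allows the acyclic-models comparison maps to be genuine chain homotopy equivalences rather than mere quasi-isomorphisms in the Hurewicz model structure.

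With (a) and (b) in hand, Proposition~\ref{prop:hhg conditions} gives that $(\eta,\vp)$ is a relative homotopic Hopf-Galois extension, as desired. The main obstacle is making the two Eilenberg--Moore-type identifications in (a) and (b) precise: one must carry the Szczarba comparison through the iterated two-sided cobar construction while keeping track of the $A_\infty$-coalgebra data supplied by \cite{hpst} and the explicit homotopies of \cite{hess-tonks}, and check that the resulting equivalences are compatible with the fibre inclusion $j$ and with the principal $\G X$-action on $E$. This bookkeeping is the technical core of the argument, and it is essentially what is carried out in \cite{karpova}.
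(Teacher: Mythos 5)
Your overall strategy matches the paper's: invoke Proposition \ref{prop:hhg conditions}, identify $B^{\mathrm{hco}K}$ with $\Om(B;K;R)$ via Corollary \ref{cor:fib-repl}, and reduce copurity of $\Gal(\eta,\vp)$ to a relative weak equivalence via Theorem \ref{thm:copure} after checking the flatness and coaugmentation hypotheses. But the two analytic cores are black-boxed in a way that hides genuine gaps. For (a), the paper does not merely ``combine Szczarba with the higher comultiplications''; it identifies $\Om\bigl(\Om C_{*}(\G X\times_{\tau}Y);\Om C_{*}Y;R\bigr)$ with $C_{*}\G^{2}X$ through the intermediate object $C_{*}\bigl(\G(\G X\times_{\tau}Y)\times_{\tau'}\G^{2}Y\bigr)$, and the step you are missing is the nontrivial simplicial fact (Lemma \ref{lem:simparg}) that $\G^{2}X\hookrightarrow\G(\G X\times_{\tau}Y)\times_{\tau'}\G^{2}Y$ is a simplicial homotopy equivalence. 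Without naming some such comparison, ``this forces $\vp^{\mathrm{hco}\eta}$ to be a chain homotopy equivalence'' is an assertion, not an argument.

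For (b) the gap is more serious: your proposed mechanism is not the paper's and, as stated, does not obviously work. The division map $E\times_{Y}E\cong E\times\G X$ encodes principality \emph{over the base} $Y$ and, via Eilenberg--Moore, controls a cotensor over $C_{*}Y$; the Galois map $B\otimes_{A}B\to B\otimes K$ is a tensor over $A=\Om C_{*}\G X\simeq C_{*}\G^{2}X$, i.e.\ over (a model of) the chains on the \emph{fibre}, one loop level away. Bridging that shift multiplicatively is exactly the hard part, and the paper avoids it entirely: after a zigzag of cobar comparisons reducing $\Gal(\eta,\vp)$ to the composite (\ref{eqn:alt-gal}) on $\Om(R;\Om C_{*}X;\Om C_{*}Y)$, it applies Koppinen's Lemma, showing the map is an \emph{isomorphism} because $\eta\otimes 1$ is convolution-invertible (using the antipode on the graded bialgebra $\Om C_{*}Y$). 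Relatedly, your account of where the suspension hypotheses enter is wrong: the paper states explicitly that part (a) does not need them at all, and in part (b) they are used not to upgrade quasi-isomorphisms to chain homotopy equivalences but to make the coalgebra structures on $C_{*}X$ and $C_{*}Y$ trivial, hence the generators of $\Om C_{*}X$ primitive and $\Om C_{*}X$ cocommutative, so that the Szczarba comparison is a strict coalgebra map and the cobar construction can be applied to it a second time. The chain homotopy inverses come instead from \cite{hess-tonks} and from bifibrancy in the projective model structure on $\Om^{2}C_{*}X$-modules.
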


We believe that this proposition holds even when $X$ and $Y$ are not suspensions, but developing the general argument would require too great a digression from the theme of this article to be reasonably presented here.

\begin{proof}  According to Proposition \ref{prop:hhg conditions}, it suffices to show that 
$$\vp^{\mathrm{hco} \eta}\colon   \Om C_{*}\G X=(\Om C_{*}\G X)^{\mathrm{hco} R} \to \Om C_{*}(\G X\times_{\tau}Y)^{\mathrm{hco} \Om C_{*}Y}$$
is a relative equivalence of algebras and that 
$$\Gal(\eta, \vp): \Om C_{*}(\G X\times _{\tau}Y)\otimes _{\Om C_{*}\G  X}\Om C_{*}(\G X\times_{\tau}Y) \to \Om C_{*}(\G X\times _{\tau}Y)\otimes \Om C_{*}Y$$
is a copure weak equivalence of corings.

We treat first the case of $\vp^{\mathrm{hco} \eta}$, for which our proof does not actually need $X$ and $Y$ to be simplicial suspensions.  By Corollary \ref{cor:fib-repl}, we can take 
$$\Om C_{*}(\G X\times_{\tau_{Y} }Y)^{\mathrm{hco} \Om C_{*}Y}=\Om \big( \Om C_{*}(\G X\times_{\tau }Y); \Om C_{*}Y; R),$$
since $\Om C_{*}Y$ is degreewise $R$-free and therefore degreewise $R$-flat.  A model of $\vp^{\mathrm{hco} \eta}$ is then given by 
\begin{equation}\label{eqn:vp-hco}
\Om (\vp; R;R): \Om C_{*}\G X=\Om (\Om C_{*}\G X;R;R) \to \Om \big( \Om C_{*}(\G X\times_{\tau }Y); \Om C_{*}Y; R).
\end{equation}
On the other hand, there is a sequence of relative weak equivalences of algebras in $\Ch_{R}$
$$\xymatrix{\Om \big( \Om C_{*}(\G X\times_{\tau}Y); \Om C_{*}Y; R)\ar[rr]^{\Om (\alpha; \alpha; R)}_{\simeq}&& \Om \big( C_{*}\G (\G X\times_{\tau}Y); C_{*}\G Y; R)\ar [d]_{\simeq}\\
&& C_{*}\big( \G (\G X\times_{\tau}Y) \times_{\tau'} \G ^{2}Y\big)\ar [d]_{\simeq}\\
&&C_{*}\G ^{2} X,}$$
where $\tau':  \G (\G X\times_{\tau}Y)\to \G ^{2}Y$ is the twisting function given by projection onto $\G  Y$, followed by the canonical twisting function $\tau_{\G Y}:  \G Y \to \G ^{2}Y$.  The fact that the first vertical arrow, which extends $\alpha_{\G X\times_{\tau }Y}$, is a relative weak equivalence follows from \cite[Theorem 2.23]{farjoun-hess}, which is a straightforward generalization of the main theorem in \cite{hess-tonks}.  The second vertical relative weak equivalence is a consequence of the fact that the inclusion 
$$\G ^{2}X \hookrightarrow\G (\G X\times_{\tau}Y) \times_{\tau'} \G ^{2}Y$$ admits a retraction that is a homotopy inverse (cf.~Lemma \ref{lem:simparg}).  Precomposing this sequence with (\ref{eqn:vp-hco}) gives exactly $\alpha_{X}$, which is also a relative weak equivalence, whence, by two-out-of-three, (\ref{eqn:vp-hco}) is a relative weak equivalence as well, as desired.

Concerning $\Gal(\eta, \vp)$, we begin by recalling from \cite[Proposition 4.3.11, Remark 4.3.12]{karpova} that since the nondifferential algebra map $\vp$ is an  inclusion into a free extension, $\Om C_{*}(\G X\times_{\tau}Y)$ admits a cellularly $r$-split filtration as a left (or right) $\Om C_{*}\G X$-module, whence it is homotopy flat, by Propositions \ref{prop:bmr-cofib} and \ref{prop:ch-special-modules}(a).  Since normalized chain complexes are $R$-free, we can choose this filtration so that each filtration quotient is $\Om C_{*}\G X$-free on an $R$-free module.  It follows that $$\Om C_{*}(\G  X\times _{\tau }Y)\otimes _{\Om C_{*}\G X}\Om C_{*}(\G X\times_{\tau}Y)$$ 
is flat as a left $\Om C_{*}(\G X\times_{\tau}Y)$-module.  Moreover, since the differential on $\Om C_{*}(\G X\times _{\tau }Y)\otimes \Om C_{*}Y$ is the usual tensor differential, it is certainly flat-cofibrant as a left $\Om C_{*}(\G X\times_{\tau}Y)$-module, as well as coaugmented.  By Theorem \ref{thm:copure}, it suffices therefore to show that  $\Gal(\eta, \vp)$ is a relative weak equivalence.

In \cite{hps} the authors show that if a simplicial set $Z$ is the simplicial suspension of another simplicial set $W$, then the natural coalgebra structure on the normalized chain complex $C_{*}Z$ is trival, and that, when endowed with its canonical multiplication, the cobar construction $\Om C_{*}Z$ is isomorphic to the tensor algebra $TC_{*}W$, endowed with the comultiplication induced by that on $C_{*}W$.  Moreover the Szczarba equivalence $\alpha_{Z}:\Om C_{*}Z \to C_{*}\G Z$ respects the comultiplication strictly.  If $W$ itself is a simplicial suspension, then the generators of the bialgebra $\Om C_{*}Z$ are actually primitive and thus $\Om C_{*}Z$ is cocommutative.

Since $\Om (R; C_{*}Y; C_{*}Y)$ is a relative cofibrant left $\Om C_{*}Y$-module, the Szczarba equivalence $\alpha_{X}:\Om C_{*}X\to C_{*}\G X$ induces a relative weak equivalence of left $\Om C_{*}X$-modules

{\small\begin{equation}\label{eqn:sz1}\Om (R; C_{*}X; C_{*}Y)\cong\Om C_{*}X \otimes _{\Om C_{*}Y} \Om (R; C_{*}Y; C_{*}Y) \xrightarrow {\simeq} C_{*}\G X \otimes_{\Om C_{*}Y}  \Om (R; C_{*}Y; C_{*}Y),\end{equation}}

\noindent where we regard $\Om C_{*}X$ as a right $\Om C_{*}Y$-module via $\Om C_{*}g: \Om C_{*}Y \to \Om C_{*}X$.
On the other hand, Szczarba proved in \cite[Theorems 2.2-2.4]{szczarba} that there is a quasi-isomorphism 
\begin{equation}\label{eqn:sz2}C_{*}\G X \otimes_{\Om C_{*}Y}  \Om (R; C_{*}Y; C_{*}Y)\to C_{*}(\G X\times_{\tau }Y)
\end{equation}
of left $C_{*}\G X$-modules, extending the equivalence $\Om C_{*}X\to C_{*}\G X$ by the identity on the $\Om C_{*}Y$-component. A straightforward generalization of the main theorem in \cite{hess-tonks} shows that this quasi-isomorphism admits a chain homotopy inverse.

As the generators of $\Om C_{*}X$ are primitive, and the differential on $\Om (R; C_{*}X; C_{*}Y)$ sends elements of $C_{*}Y$ to generators of $\Om C_{*}X$, it is easy to show that $\Om (R; C_{*}X; C_{*}Y)$ admits a cocommutative comultiplication extending those on $\Om C_{*}X$ and $C_{*}Y$, with no perturbation.  Moreover, the composite of equivalences (\ref{eqn:sz1}) and (\ref{eqn:sz2}) is a strict map of coalgebras with respect to this comultiplication, since it  extends $\Om C_{*}X\to C_{*}\G X$ simply by the identity on the $C_{*}\Om Y$-component.  Applying the cobar construction to the composite equivalence of coalgebras given by (\ref{eqn:sz1}) followed by (\ref{eqn:sz2}), we obtain a quasi-isomorphism of chain algebras
$$\Om^{2} (R; C_{*}X; C_{*}Y)\xrightarrow{\simeq} \Om C_{*}(\G X\times_{\tau }Y),$$
which admits a chain homotopy inverse as left $\Om ^{2}C_{*}X$-modules, since  the source and target are bifibrant with respect to the projective model category structures on ${}_{\Om ^{2}C_{*}X}(\Ch_{R})$.  Note that we use here that the target is a quasi-free extension of the source, as chain algebras. 

Consider $\Om^{2} C_{*}X$ as a right $\Om^{2} C_{*}Y$-module via $\Om^{2} C_{*}g: \Om^{2} C_{*}Y \to \Om^{2} C_{*}X$. It follows from \cite [Corollary 3.6]{hess-levi} and \cite[Lemma 4.3.21]{karpova} that 
$$\Om (R; \Om C_{*}X; \Om C_{*}Y)= \Om^{2} C_{*}X\otimes_{\Om ^{2}C_{*}Y}\Om (R; \Om C_{*}Y; \Om C_{*}Y)$$
admits a natural chain bialgebra structure such that there is a quasi-isomorphism of chain algebras
$$\Om^{2} (R; C_{*}X; C_{*}Y)\xrightarrow\simeq \Om (R; \Om C_{*}X; \Om C_{*}Y).$$
As above, since the source and target are bifibrant with respect to the projective model category structures on ${}_{\Om ^{2}C_{*}X}(\Ch_{R})$, this quasi-isomorphism admits a chain homotopy inverse as left $\Om ^{2}C_{*}X$-modules.  We therefore have a zigzag 
$$ \Om (R; \Om C_{*}X; \Om C_{*}Y) \rightleftarrows \Om^{2} (R; C_{*}X; C_{*}Y) \leftrightarrows  \Om C_{*}(\G X\times_{\tau }Y)$$
of relative weak equivalences of $\Om^{2} C_{*}Y$-modules.  Moreover, both of the outward-pointing maps are easily seen to be morphism of $\Om C_{*}Y$-comodules.    Because all three objects are cofibrant $\Om^{2}C_{*}X$-modules, and $\Om C_{*}(\G X\times_{\tau }Y)$ is a cofibrant $\Om C_{*}\G X$-module, there is a zigzag of relative equivalences of $\Om C_{*}Y$-comodules
$$\xymatrix{\Om (R; \Om C_{*}X; \Om C_{*}Y) \otimes_{\Om^{2} C_{*}X}\Om (R; \Om C_{*}X; \Om C_{*}Y)\\ \Om^{2} (R; C_{*}X; C_{*}Y)\otimes_{\Om^{2} C_{*}X}\Om^{2} (R; C_{*}X; C_{*}Y)\ar [u]_{\simeq}\ar [d]_{\simeq}\\
\Om C_{*}(\G X \times_{\tau } Y) \otimes_{\Om C_{*}\G X}\Om C_{*}(\G X \times_{\tau } Y).}$$

To show that $\Gal (\eta, \vp)$ is a relative weak equivalence, it suffices therefore to prove that the composite
\begin{equation}\label{eqn:alt-gal}\xymatrix{\Om (R; \Om C_{*}X; \Om C_{*}Y) \otimes_{\Om^{2} C_{*}X}\Om (R; \Om C_{*}X; \Om C_{*}Y)\ar [d]\\
\Om (R; \Om C_{*}X; \Om C_{*}Y) \otimes_{\Om^{2} C_{*}X}\Om (R; \Om C_{*}X; \Om C_{*}Y)\otimes \Om C_{*}Y\ar[d]\\
\Om (R; \Om C_{*}X; \Om C_{*}Y)\otimes \Om C_{*}Y}
\end{equation}
is a relative weak equivalence, where the first map is given by the $\Om C_{*}Y$-coaction and the second by the multiplication in $\Om (R; \Om C_{*}X; \Om C_{*}Y)$.  

Observe that, if we ignore differentials,
$$\Om (R; \Om C_{*}X; \Om C_{*}Y) \otimes_{\Om^{2} C_{*}X}\Om (R; \Om C_{*}X; \Om C_{*}Y)\cong \Om^{2} C_{*}X\otimes \Om C_{*}Y\otimes \Om C_{*}Y.$$
By Koppinen's Lemma
\cite[Lemma 4.4.1]{schauenburg} (again ignoring differentials), the composite (\ref{eqn:alt-gal}) is an isomorphism of $\Om (R;\Om C_{*}X; \Om C_{*}Y)$-modules and $\Om C_{*}Y$-comodules   if and only if the map
$$\eta\otimes 1:\Om C_{*}Y\to \Om^{2} C_{*}X\otimes \Om C_{*}Y$$
is invertible with respect to the convolution product on 
$$\Hom \big(\Om C_{*}Y,  \Om (R;\Om C_{*}X; \Om C_{*}Y)\big),$$ 
where $\Hom$ means the internal hom of graded $R$-modules.  Since $\Om C_{*}Y$ has an antipode $S$ (because it is a graded bialgebra), and $\eta\otimes 1$ is an algebra map (cf.~formulas in \cite[Corollary 3.6]{hess-levi}), it follows that $(\eta\otimes 1)S$ is a convolution inverse to $\eta\otimes 1$, and thus that the composite (\ref{eqn:alt-gal}) is an isomorphism, at least as graded modules.  Its inverse is necessarily also a chain map, however, because it is inverse to a chain map and therefore  (\ref{eqn:alt-gal}) is an isomorphism of chain complexes. We conclude that $\Gal(\eta, \vp)$ must be a relative weak equivalence, as desired.
\end{proof}

\begin{remark}\label{rmk:Koszul-prinfib} It follows from Propositions \ref{prop:koszul} and \ref{prop:prin-fib} that if $C_{*}(\G X\times_{\tau}Y)$ is contractible, then
$$\Ho \big((\Ch_R) _{\Om C_{*}\G X}\big)\simeq \Ho \big( (\Ch_R)^{\Om C_{*}Y}\big),$$ 
at least under the additional hypothese on $X$ and $Y$ in the statement of the proposition.
When applied to the universal bundle $\G X\times_{\tau _{X}}X$,  this equivalence becomes
$$\Ho \big((\Ch_R) _{\Om C_{*}\G X}\big)\simeq \Ho \big( (\Ch_R)^{\Om C_{*}X}\big),$$ 
at least when $X$ is a double suspension.
\end{remark}

\appendix

\section{A technical lemma}

In this section we prove a technical lemmas used in the proof of Proposition \ref{prop:prin-fib}

\begin{lemma}\label{lem:simparg} For any $1$-reduced simplicial set $X$ and $1$-reduced simplicial set $Y$ equipped with twisting function $\tau: Y\to \G  X$, the inclusion 
$$\G ^{2}X \hookrightarrow\G (\G X\times_{\tau_{Y}}Y) \times_{\tau'} \G ^{2}Y$$ is a simplicial homotopy equivalence, where $\tau':  \G (\G X\times_{\tau}Y)\to \G ^{2}Y$ is the twisting function given by projection onto $\G  Y$, followed by the canonical twisting function $\tau_{\G Y}:  \G Y \to \G ^{2}Y$. 
\end{lemma}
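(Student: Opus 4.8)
The plan is to recognise the twisted cartesian product in the statement as a model for a homotopy fibre, and then to use that Kan's loop group functor carries homotopy fibre sequences to homotopy fibre sequences.

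First I would set up notation. Write $E=\G X\times_{\tau}Y$, let $q\colon E\to Y$ be the canonical projection — a principal $\G X$-bundle, hence a Kan fibration, with strict fibre $q^{-1}(\ast)$ canonically isomorphic to $\G X$ via the inclusion $j$ — and put $p:=\G q=\G(\mathrm{pr}_Y)\colon \G E\to\G Y$. By construction $\tau'=\tau_{\G Y}\circ p$, so the twisted cartesian product $P:=\G(\G X\times_\tau Y)\times_{\tau'}\G^2 Y$ is the pullback along $p$ of the contractible twisted cartesian product $W$ determined by the canonical twisting function $\tau_{\G Y}\colon \G Y\to\G^2 Y$. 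As $W\to\G Y$ is a Kan fibration with contractible total space, the pullback square
$$\xymatrix{P\ar[r]\ar[d] & W\ar[d] \\ \G E\ar[r]^-{p} & \G Y}$$
is a homotopy pullback square, so $P$ is a model for $\mathrm{hofib}(p)$; in particular $P\to\G E$ is a Kan fibration, and since $\G E$, $\G Y$, $\G^2 X$, $\G^2 Y$ are simplicial groups, hence Kan complexes, so is $P$.

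Next I would identify the inclusion of the statement with the canonical comparison map. Since $q\circ j=\mathrm{const}_\ast$, we get $p\circ\G j=\G(q\circ j)=\mathrm{const}_1$, so $\tau'\circ\G j$ is trivial and $z\mapsto(1,\G j(z))$ is a well-defined simplicial map $\iota\colon\G^2 X\to P$, landing in the copy of the strict fibre $p^{-1}(1)\subseteq\G E$ inside $P$; this is the inclusion in question. Tracing it through the homotopy pullback square shows that $\iota$ is homotopic to the canonical comparison map $\G(\mathrm{hofib}(q))=\G(\G X)=\G^2 X\to\mathrm{hofib}(\G q)=\mathrm{hofib}(p)\simeq P$. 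It therefore remains to prove that this comparison map is a weak equivalence, i.e.\ that $\G$ sends the homotopy fibre sequence $\G X\to E\xrightarrow{q}Y$ to the homotopy fibre sequence $\G^2 X\to\G E\xrightarrow{p}\G Y$.

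This last point is the substantive step, and I expect it to be the main obstacle, since $\G q$ is in general not a Kan fibration, so the statement is not formal. I would deduce it from the fact that $\G$, regarded as the left adjoint of the Quillen equivalence $\G\dashv\overline W$ between reduced simplicial sets and simplicial groups, has every object in its domain cofibrant, so $\G$ agrees with its total left derived functor $\mathbb{L}\G$; being part of a Quillen equivalence, $\mathbb{L}\G$ preserves homotopy pullbacks, in particular the homotopy pullback $\mathrm{hofib}(q)=\ast\times^h_Y E$. (Equivalently, one may simply invoke that Kan's loop group preserves homotopy fibre sequences of reduced simplicial sets.) Granting this, $\iota$ is a weak equivalence between Kan complexes, hence a simplicial homotopy equivalence; being also a monomorphism it is a trivial cofibration into the fibrant object $P$, so the identity of $\G^2 X$ extends along $\iota$ to a retraction $r\colon P\to\G^2 X$, and $r$ is automatically a simplicial homotopy inverse of $\iota$, because $\iota r$ and $\mathrm{id}_P$ become equal after restriction along the trivial cofibration $\iota$ while $P$ is fibrant. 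This proves the lemma and produces the retraction used in the proof of Proposition~\ref{prop:prin-fib}.
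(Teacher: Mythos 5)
Your proposal is correct in outline and reaches the right conclusion, but by a genuinely different route from the paper's. The paper argues concretely: it compares the two fibration sequences $\G ^2X \to \G (\G X\times_{\tau_X}X)\to \G X$ and $\G ^2X\to \G ^2X\times_{\tau_{\G X}}\G X\to\G X$ as bifibrant objects of the slice category $\G ^2X/\mathsf{sSet}/\G X$, pulls the resulting homotopy equivalence back along $\alpha_\tau\colon \G Y\to\G X$, and combines it with a hands-on sublemma --- that $\G (V\times_ZW)\to \G V\times_{\G Z}\G W$ is a homotopy equivalence in $\G V/\mathsf{sSet}/\G W$, proved via geometric realization and $|\G K|\simeq\Omega|K|$ --- before finishing with a deformation-retract argument. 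You instead recognize the twisted cartesian product as the standard model of the homotopy fibre of $\G q$ and reduce everything to the single assertion that $\G$ carries the fibration sequence $\G X\to \G X\times_\tau Y\to Y$ to a homotopy fibre sequence; your closing step upgrading the weak equivalence to a simplicial homotopy equivalence (trivial cofibration into a fibrant object) is the same device the paper uses and is fine. Both proofs ultimately rest on the fact that $\G$ behaves like $\Omega$ and hence commutes with homotopy fibres; yours packages this abstractly where the paper builds explicit zigzags of bifibrant models, so your version is shorter at the price of leaning on a less elementary input.

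The one step you should tighten is the justification ``being part of a Quillen equivalence, $\mathbb{L}\G$ preserves homotopy pullbacks.'' A left Quillen functor does not preserve homotopy pullbacks in general, so your argument genuinely needs the full strength of the equivalence (an equivalence of homotopy theories preserves all homotopy limits); moreover the homotopy pullback must then be computed in the category where the equivalence lives, namely \emph{reduced} simplicial sets, where the homotopy pullback of $*\to Y\leftarrow E$ is only the basepoint component of the homotopy fibre computed in $\mathsf{sSet}$, and where the strict-fibre square is not automatically a homotopy pullback (fibrations in the reduced model structure are not the Kan fibrations, and that model structure is not right proper). These issues are harmless here precisely because $X$ is $1$-reduced, so $\G X$ is reduced and hence connected --- this is where that hypothesis enters your argument and it deserves to be said explicitly. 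Alternatively, you can bypass the abstract machinery: $\G q$ is a surjective homomorphism of simplicial groups, hence a Kan fibration with fibre $\ker(\G q)$, and comparing the two long exact sequences of homotopy groups (using $\pi_n\G K\cong\pi_{n+1}K$ and the connectivity of $\G X$) shows directly that $\G j\colon\G ^2X\to\ker(\G q)$ is a weak equivalence, which is all your argument needs.
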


\begin{proof} Note first that if $\vp: G \to H$ is a surjective simplicial homomorphism, then $\ker \vp$ acts principally on $G$, i.e., $\vp$ is a principal $\ker \vp$-fibration, and therefore a Kan fibration \cite[Lemma 18.2]{may}. Moreover, the inclusions of $\G ^{2}X$ into $\G ( \G  X \times_{\tau _{X}} X)$ and $\G ^{2}X\times_{\tau_{\G X}} \G  X$ are both cofibrations in the Kan model structure on $\mathsf{sSet}$.  The sequences
$$\G ^{2}X\to \G ( \G  X \times_{\tau _{X}} X)\to \G X \quad\text{and}\quad \G ^{2}X\to \G ^{2}X\times_{\tau_{\G X}} \G  X\to \G Y$$
are therefore bifibrant objects in the slice category $\G ^{2}X/\mathsf {sSet}/\G  X$ of simplicial sets over $\G X$ and under $\G ^{2}X$.  Since both objects are weakly equivalent in  $\G ^{2}X/\mathsf {sSet}/\G  X$ to the sequence $\G ^{2}X\to * \to \G X$, there are homotopy equivalences
$$\G ( \G  X \times_{\tau _{X}} X)\rightleftarrows  \G ^{2}X\times_{\tau_{\G X}}\G X
$$
in $\G ^{2}X/\mathsf {sSet}/\G  X$.  Pulling back over the homomorphism $\alpha_{\tau}:\G  Y \to \G  X$ induced by the twisting function $\tau$ gives rise to homotopy equivalences
\begin{equation}\label{eqn:he'} \G ( \G  X \times_{\tau _{X}} X)\times _{\G X}\G  Y\rightleftarrows  \G ^{2}X\times_{\tilde\tau}\G Y\end{equation}
in $\G ^{2}X/\mathsf {sSet}/\G  Y$, where $\tilde\tau=\tau_{\G X}\alpha_{\tau}:\G Y \to \G ^{2}X$ .

For the next step in our argument, it is important to observe that for any pair of simplicial maps with common target
$$V\to Z \leftarrow W,$$
the natural simplicial map
$$\pi: \G (V\times_{Z}W) \to \G V \times_{\G  Z}\G  W$$
admits a homotopy inverse in the slice category $\G V/\mathsf{sSet}/\G W$ of simplicial sets under $\G V$ and over $\G W$.  Indeed, the projections from both objects to $\G  W$ are Kan fibrations in $\mathsf{sSet}$, and  the obvious maps 
$$\G V\to \G (V\times_{Z}W)\quad\text{ and }\quad \G V\to\G V \times_{\G  Z}\G  W$$ are inclusions and therefore cofibrations in the Kan model category structure on $\mathsf{sSet}$, so that 
$$\G V \to \G (V\times_{Z}W)\to \G  W \quad \text{and} \quad \G V \to \G V \times_{\G  Z}\G  W\to \G  W$$
are bifibrant objects in $\G V/\mathsf{sSet}/\G W$. Furthermore, $\pi$ is a weak equivalence, since the geometric realization of $\pi$ is simply the homotopy equivalence $\Om\big( |V| \times _{|Z|}|W|\big) \to \Om |V| \times _{\Om [Z]} \Om |W|$.  Here we use that $|\G  K| \simeq \Om |K|$ for any reduced simplicial set $K$ and that geometric realization commutes with pullbacks, as can be shown easily \cite{nlab:geom-real}.  It follows that $\pi$ is in fact a homotopy equivalence in $\G V/\mathsf{sSet}/\G W$, as desired.

As a special case of the result above, we obtain homotopy equivalences
\begin{equation}\label{eqn:he''}\G (\G  X \times_{\tau} Y)=\G \big ( (\G  X \times_{\tau_{X}}X)\times_{X} Y\big) \rightleftarrows\G  (\G  X \times_{\tau_{X}}X)\times_{\G X} \G Y
\end{equation}
 in $\G ^{2}X/\mathsf{sSet}/\G Y$.  
 
 Combining (\ref{eqn:he'}) and (\ref{eqn:he''}) gives rise to homotopy equivalences.
$$\G (\G  X \times_{\tau} Y) \rightleftarrows \G ^{2}X\times_{\tilde\tau}\G Y$$
 in $\G ^{2}X/\mathsf{sSet}/\G Y$ and therefore to homotopy equivalences
 $$\G (\G  X \times_{\tau} Y)\times_{\tau '}\G ^{2} Y \rightleftarrows \G ^{2}X\times_{\tilde\tau}\G Y\times_{\tau_{\G Y}}\G ^{2} Y$$
 in $\G ^{2}X/\mathsf{sSet}$.  Since $\G ^{2}X$ is a strong deformation retract of  $\G ^{2}X\times_{\tilde\tau}\G Y\times_{\tau_{\G Y}}\G ^{2} Y$, we can conclude.
\end{proof}

 \bibliographystyle{amsplain}
\bibliography{vstructures2}
\end{document}